\date{}
\theoremstyle{theorem}
\newtheorem{theo}{Theorem}
\newtheorem{lemm}[theo]{Lemma}
\newtheorem{prop}[theo]{Proposition}
\newtheorem{coro}[theo]{Corollary}
\theoremstyle{definition}
\newtheorem{defi}[theo]{Definition}
\theoremstyle{remark}
\newtheorem{rema}{Remark}
\author {Maxim Prasolov and Vladimir Shastin}
\title[Distinguishing Legendrian knots of topological type~$7_4$, $9_{48}$ and $10_{136}$]{Distinguishing Legendrian knots in topological types~$7_4$, $9_{48}$, $10_{136}$ with maximal Thurston--Bennequin number}
\address{\noindent Department of Mechanics and Mathematics of Moscow State University, 1 Leninskije gory, Moscow 119991, Russia}
\email{0x00002a@gmail.com}
\address{Department of Mechanics and Mathematics of Moscow State University, 1 Leninskije gory, Moscow 119991, Russia}
\email{vashast@gmail.com}
\begin{document}
\maketitle

\begin{abstract}
In a recent work of I.\,Dynnikov and M.\,Prasolov\cite{dypra202?} a new method of comparing Legendrian knots
with nontrivial symmetry group is proposed. Using this method we confirm conjectures of~\cite{chong2013} about Legendrian knots in topological types~$7_4$, $9_{48}$ and $10_{136}$. This completes the classification of Legendrian types of rectangular diagrams of knots of complexity up to 9.
\end{abstract}

\tableofcontents

\section{Introduction}

We demonstrate the method introduced in \cite{dypra202?} to distinguish Legendrian knots. 

We denote by $\xi_+$ the standard contact structure on $\mathbb{S}^3$ and by $\xi_-$ the mirror image of $\xi_+$. 

Let $R$ be an oriented rectangular diagram of a link with a numbering of connected components. This diagram determines a cusp-free piecewise smooth link in $\mathbb{S}^3$ whose components are oriented and numbered. We denote this link by $\widehat R$. This link is $\xi_+$-Legendrian and $\xi_-$-Legendrian simultaneously. We denote by $\mathscr L_+ (R)$ (respectively, $\mathscr L_- (R)$) the class of $\xi_+$-Legendrian (respectively, $\xi_-$-Legendrian) equivalence of the link $\widehat R$.
We have $\mathscr L_+(R_1) = \mathscr L_+(R_2)$ (respectively, $\mathscr L_-(R_1) = \mathscr L_-(R_2)$) if and only if (see~\cite{ngth,OST}) $R_1$ and $R_2$ are related by a sequence of elementary moves of two kinds:
\begin{enumerate}
\item exchange moves;
\item stabilizations and destabilizations of type~I (respectively, of type~II).
\end{enumerate}

The set of all classes of combinatorial equivalence of rectangular diagrams obtained from $R$ by exchange moves is called {\it exchange class} and is denoted by $[R]$. If $[R_1] = [R_2]$ then $\mathscr L_{\pm} (R_1) = \mathscr L_{\pm} (R_2).$ Let us discuss the converse statement.

Let $L_1$ and $L_2$ be links in $\mathbb{S}^3$. A {\it morphism} from $L_1$ to $L_2$ is a connected component of the space of homeomorphisms $(\mathbb S^3, L_1)\to(\mathbb S^3,L_2)$ which preserve the orientation of the sphere, and the orientations and the numbering of all connected components of the links. Morphisms from $L$ to $L$ form a group under composition which we call the {\it symmetry group} and denote by $\mathrm{Sym}(L).$ We write $\mathrm{Sym} (\widehat R)$ simply by $\mathrm{Sym} (R).$

With every exchange move, stabilization or destabilization that produces a rectangular diagram $R_2$ from $R_1$ we associate a morphism from $\widehat R_1$ to $\widehat R_2$. A subset of $\mathrm{Sym} (R)$ consisting of elements which can be represented by a composition of morphisms associated with exchange moves, stabilizations or destabilizations of type~I (respectively, type~II) is a subgroup and is denoted by $\mathrm{Sym}_+ (R)$ (respectively, $\mathrm{Sym}_- (R)$).

\begin{theo}[Corollary 4.9 of \cite{dypra202?}]
\label{exchange-count}
For any rectangular diagram $R$ the number of exchange classes $[R']$ such that $\mathscr L_{\pm} (R) = \mathscr L_{\pm} (R')$ is equal to the cardinality of the double coset $\mathrm{Sym}_- (R) \backslash \mathrm{Sym}(R) / \mathrm{Sym}_+(R)$.
\end{theo}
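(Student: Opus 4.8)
The plan is to construct an explicit bijection between the set $\mathcal E$ of exchange classes $[R']$ satisfying \emph{both} $\mathscr L_+(R)=\mathscr L_+(R')$ and $\mathscr L_-(R)=\mathscr L_-(R')$ and the double coset $\mathrm{Sym}_-(R)\backslash\mathrm{Sym}(R)/\mathrm{Sym}_+(R)$. Write $G=\mathrm{Sym}(R)$, $H_+=\mathrm{Sym}_+(R)$ and $H_-=\mathrm{Sym}_-(R)$. Given $[R']\in\mathcal E$, the equality $\mathscr L_+(R)=\mathscr L_+(R')$ supplies a sequence of exchange moves and type-I (de)stabilizations from $R$ to $R'$, hence a morphism $f_+\colon\widehat R\to\widehat{R'}$; likewise $\mathscr L_-(R)=\mathscr L_-(R')$ supplies a morphism $f_-\colon\widehat R\to\widehat{R'}$ built from exchange moves and type-II (de)stabilizations. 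Since $f_+$ and $f_-$ share source and target, $g:=f_-^{-1}\circ f_+$ lies in $G$, and I would set $\Phi([R']):=H_-\,g\,H_+$.

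First I would verify that $\Phi$ is well defined. The morphisms $\widehat R\to\widehat{R'}$ coming from exchange/type-I sequences form a single right coset $f_+H_+$: any two of them differ by a type-I self-morphism of $\widehat R$, which by definition of $\mathrm{Sym}_+(R)$ is exactly an element of $H_+$; symmetrically the exchange/type-II morphisms form $f_-H_-$. Replacing $f_+$ by $f_+h_+$ and $f_-$ by $f_-h_-$ changes $g$ into $h_-^{-1}gh_+$, so the double coset $H_-gH_+$ is unchanged. Independence of the chosen representative $R'$ is automatic, because an exchange move relating $R'$ and $R''$ induces a morphism that lies in both $\mathrm{Sym}_+$ and $\mathrm{Sym}_-$ (exchange moves are admissible for each equivalence), and this common factor cancels when one forms $f_-^{-1}\circ f_+$. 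Thus $\Phi$ depends only on $[R']$.

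Next I would address bijectivity. For surjectivity, given $g\in G$ it suffices to factor $g=f_-^{-1}\circ f_+$ where $f_+\colon\widehat R\to\widehat{R'}$ arises from an exchange/type-I sequence and $f_-\colon\widehat R\to\widehat{R'}$ from an exchange/type-II sequence through one common diagram $R'$, since the target $R'$ then lies in $\mathcal E$ and $\Phi([R'])=H_-gH_+$; this reduces surjectivity to the realizability statement that every symmetry of $\widehat R$ is induced by elementary moves and can be presented as a type-I arc followed by the reverse of a type-II arc, a fact I would import from the analysis of the move groupoid in \cite{dypra202?}. For injectivity, suppose $\Phi([R_1'])=\Phi([R_2'])$. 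Absorbing the double-coset ambiguity into the choices of $f_\pm$, I may assume $(f_-^{(1)})^{-1}f_+^{(1)}=(f_-^{(2)})^{-1}f_+^{(2)}$, whence the single morphism
\[
\psi:=f_+^{(2)}\circ (f_+^{(1)})^{-1}=f_-^{(2)}\circ (f_-^{(1)})^{-1}\colon\widehat{R_1'}\to\widehat{R_2'}
\]
is simultaneously represented by an exchange/type-I sequence and by an exchange/type-II sequence.

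The main obstacle is then precisely the concluding rigidity step: I must show that a morphism $\psi\colon\widehat{R_1'}\to\widehat{R_2'}$ representable both by exchange and type-I moves and by exchange and type-II moves is already representable by exchange moves alone, that is, $[R_1']=[R_2']$. This is the only non-formal ingredient — everything else is coset bookkeeping — and it is presumably the substance of the main theorem of \cite{dypra202?} of which this statement is a corollary; in effect it asserts that the type-I and type-II refinements of exchange-equivalence meet in nothing but exchange-equivalence itself. Granting this rigidity, together with the realizability used for surjectivity, $\Phi$ is a bijection and the number of exchange classes equals $\lvert H_-\backslash G/H_+\rvert$, as claimed.
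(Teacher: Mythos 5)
The paper does not actually prove this statement: it is imported verbatim as Corollary~4.9 of the unpublished preprint \cite{dypra202?}, so there is no in-paper argument to compare yours against. Judged on its own, your double-coset bookkeeping is correct and cleanly organized. The verification that morphisms realizable by exchange/type-I sequences form a single coset $f_+\mathrm{Sym}_+(R)$ (because that class of morphisms is closed under composition and inversion), the computation that replacing $f_\pm$ by $f_\pm h_\pm$ changes $g=f_-^{-1}\circ f_+$ only within its double coset, and the observation that changing the representative of $[R']$ by an exchange move cancels in $f_-^{-1}\circ f_+$ are all sound, and you get the sides of the double coset ($\mathrm{Sym}_-$ on the left, $\mathrm{Sym}_+$ on the right) to match the statement.

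The gap is that the entire mathematical content sits in the two facts you explicitly defer: (a) for surjectivity, that every $g\in\mathrm{Sym}(R)$ factors as $f_-^{-1}\circ f_+$ through a common diagram $R'$, i.e.\ that an arbitrary mixed sequence of elementary moves realizing $g$ can be rearranged into a type-I arc followed by a reversed type-II arc; and (b) for injectivity, that a morphism representable both by exchange/type-I moves and by exchange/type-II moves is representable by exchange moves alone. Neither is formal: (b) in particular is a strong rigidity statement (it is what makes the whole method of this paper work, cf.\ the discussion after Theorem~\ref{exchange-count} and the role of \cite{bypasses}), and (a) requires a nontrivial commutation analysis of stabilizations of different types. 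You correctly identify both as the substance of the main theorem of \cite{dypra202?}, but since that reference is unpublished and you supply no argument for either, your proposal is a correct reduction of the corollary to the main theorem rather than a proof. That said, this is exactly the logical status the result has in the present paper as well.
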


In this paper we deal with the situation when the double coset consists of one element. So $\mathscr L_- (R) = \mathscr L_- (R')$ and $[R]\neq [R']$ implies that $\mathscr L_+ (R) \neq \mathscr L_+ (R').$ We use this to distinguish Legendrian types $\mathscr L_+ (R)$ and $\mathscr L_+ (R')$. The condition $\mathscr L_- (R) = \mathscr L_- (R')$ always can be ensured by appropriate choice of rectangular diagrams $R$ and $R'$ preserving their $\xi_+$-Legendrian types (see \cite{bypasses}). The condition $[R]\neq [R']$ can be checked by generating a finite list of all combinatorial types of rectangular diagrams obtained from $R$ by exchange moves.

By the discussion in \cite[Section "Examples"]{dypra202?}, the last unresolved conjectures of~\cite{chong2013} are about topological types $7_4,$ $9_{48}$ and $10_{136}.$ We apply this method to these cases. We emphasize that the main result of the present paper is contingent on the results of the paper \cite{dypra202?} which is not published yet.

\begin{prop}
\label{main-result-7_4}
$\mathscr L_+ (7_4^1) \neq \mathscr L_+ (7_4^2),$
\end{prop}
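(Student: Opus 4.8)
The plan is to apply Theorem~\ref{exchange-count} in the special case where the relevant double coset is a single element, exactly as outlined in the paragraph preceding the statement. The two symbols $7_4^1$ and $7_4^2$ denote (the $\xi_+$-Legendrian types of) two specific rectangular diagrams $R_1$ and $R_2$ of topological type $7_4$ that both realize the maximal Thurston--Bennequin number; the goal is to show these two Legendrian types are distinct. According to the discussion in the excerpt, it suffices to exhibit rectangular diagrams $R_1$ and $R_2$ satisfying the two conditions $\mathscr L_-(R_1)=\mathscr L_-(R_2)$ and $[R_1]\neq[R_2]$, from which $\mathscr L_+(R_1)\neq\mathscr L_+(R_2)$ follows immediately.

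First I would fix concrete rectangular diagrams $R_1$ and $R_2$ representing the two candidate Legendrian types $7_4^1$ and $7_4^2$; these are presumably the diagrams coming from the conjectural classification in \cite{chong2013}. Second, I would arrange that $\mathscr L_-(R_1)=\mathscr L_-(R_2)$. As the excerpt notes, this condition can always be ensured by replacing $R_1$ and $R_2$ with suitable diagrams of the same $\xi_+$-Legendrian type (using \cite{bypasses}); concretely this means producing an explicit sequence of exchange moves and type-II (de)stabilizations transforming $\widehat{R_1}$ into $\widehat{R_2}$ as $\xi_-$-Legendrian knots, i.e.\ a $\xi_-$-Legendrian isotopy. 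Third, I would verify $[R_1]\neq[R_2]$ by the finite procedure described: generate the complete list of combinatorial types of rectangular diagrams obtained from $R_1$ by exchange moves, generate the analogous list for $R_2$, and check that the two lists are disjoint. Since the exchange class $[R]$ is finite, this is a terminating computation.

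The logical core is then a single invocation of Theorem~\ref{exchange-count}: once $\mathscr L_-(R_1)=\mathscr L_-(R_2)$ holds, the number of exchange classes $[R']$ with $\mathscr L_-(R_1)=\mathscr L_-(R')$ equals the cardinality of the double coset $\mathrm{Sym}_-(R_1)\backslash\mathrm{Sym}(R_1)/\mathrm{Sym}_+(R_1)$. To make the argument work I must establish that this double coset has exactly one element, which (together with $\mathscr L_-(R_1)=\mathscr L_-(R_2)$) forces any two diagrams with equal $\xi_-$-type to be exchange-equivalent \emph{if} they also had equal $\xi_+$-type; contrapositively, since $[R_1]\neq[R_2]$, their $\xi_+$-types must differ, giving $\mathscr L_+(R_1)\neq\mathscr L_+(R_2)$. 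Thus I would compute the symmetry group $\mathrm{Sym}(R)$ of the knot $7_4$ and its two subgroups $\mathrm{Sym}_\pm(R)$, and confirm the double coset is trivial.

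The main obstacle I expect is the pair of group-theoretic and computational verifications feeding into Theorem~\ref{exchange-count}: determining $\mathrm{Sym}(R)$ together with the subgroups $\mathrm{Sym}_+(R)$ and $\mathrm{Sym}_-(R)$ precisely enough to conclude that the double coset collapses to one element, and carrying out the exchange-class enumeration reliably enough to certify $[R_1]\neq[R_2]$. The symmetry-group computation is delicate because $7_4$ is the case with nontrivial symmetry for which the method of \cite{dypra202?} was designed, so identifying exactly which symmetries are realized by type-I versus type-II moves is the crux; by contrast, exhibiting the explicit $\xi_-$-Legendrian isotopy realizing $\mathscr L_-(R_1)=\mathscr L_-(R_2)$ and running the finite exchange-class search, while laborious, are essentially routine once the diagrams are fixed.
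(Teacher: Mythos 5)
Your proposal is correct and follows essentially the same route as the paper: establish $\mathscr L_-(7_4^1)=\mathscr L_-(7_4^2)$ (the paper does this via the classification of $\xi_-$-types with given classical invariants in the atlas, plus explicit move sequences in ancillary files), verify $[7_4^1]\neq[7_4^2]$ by the finite exchange-class enumeration (here trivial, since no exchange move applies to $7_4^1$), and collapse the double coset by showing $\mathrm{Sym}_-(7_4^1)=\mathrm{Sym}(7_4^1)$, which is exactly Lemma~\ref{7_4-symmetry}. The only respect in which the paper is more specific is that it forces the double coset to be a singleton by proving the full equality $\mathrm{Sym}_-=\mathrm{Sym}$ rather than computing all three groups separately, but this is a refinement of, not a departure from, your plan.
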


\begin{figure}[h]
\includegraphics[scale=.2]{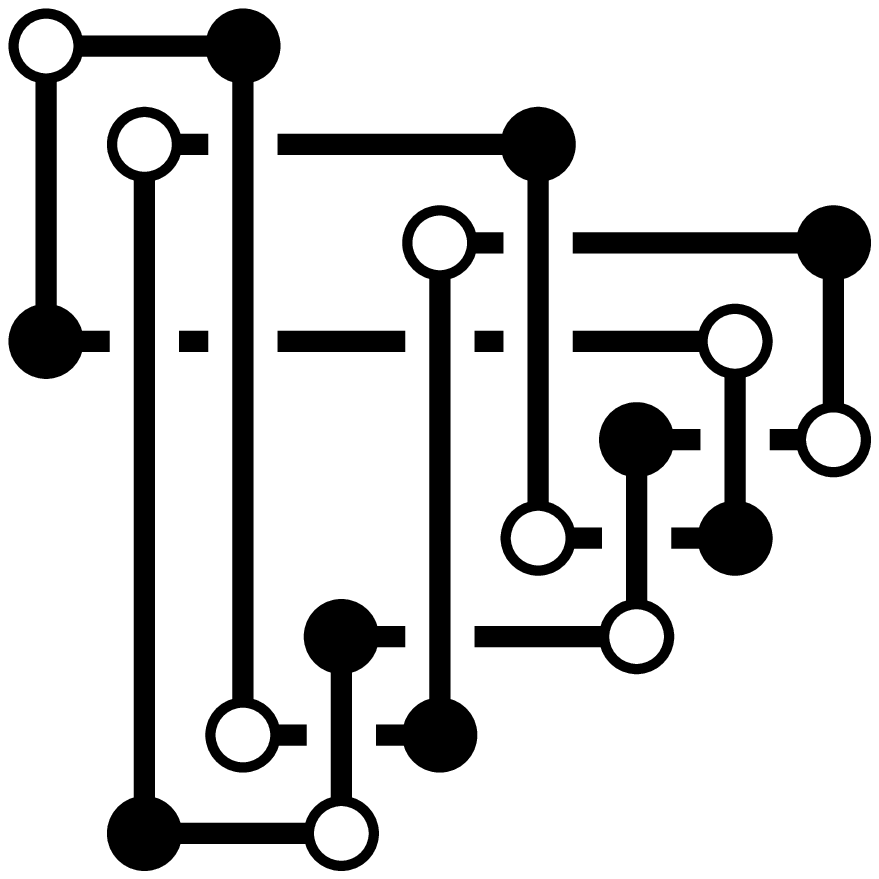}\hskip1cm
\includegraphics[scale=.2]{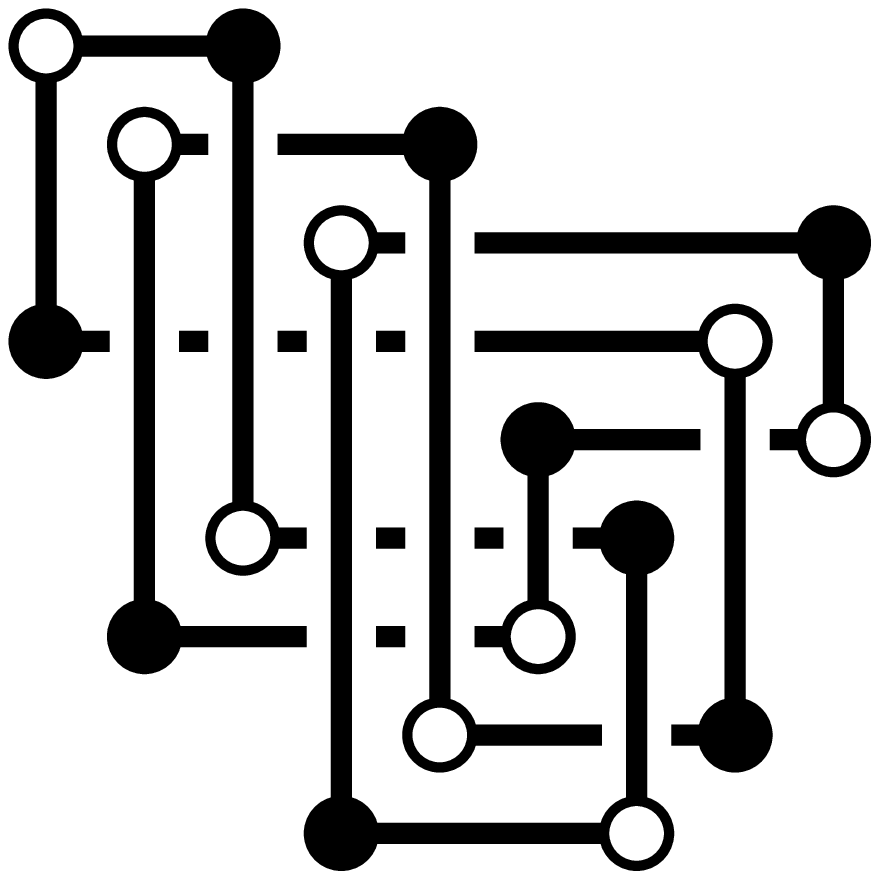}
\put(-105,-20){$7_4^1$}
\put(-30,-20){$7_4^2$}
\end{figure}

\begin{prop}
\label{main-result-9_48}
$\mathscr L_+ (9_{48}^1) \neq \mathscr L_+ (9_{48}^2)$ and $\mathscr L_+ (9_{48}^3) \neq \mathscr L_+ (9_{48}^4)$,
\end{prop}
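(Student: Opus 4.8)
The plan is to apply Theorem~\ref{exchange-count} exactly as one would for Proposition~\ref{main-result-7_4}: for each of the two pairs I would produce rectangular diagrams whose $\xi_-$-Legendrian types coincide but whose exchange classes differ, and then use the triviality of the associated double coset to transfer the difference to the $\xi_+$-Legendrian types. Concretely, to prove $\mathscr L_+(9_{48}^1) \neq \mathscr L_+(9_{48}^2)$ I would fix rectangular diagrams $R_1$ and $R_2$ representing $9_{48}^1$ and $9_{48}^2$ with maximal Thurston--Bennequin number, establish $\mathscr L_-(R_1) = \mathscr L_-(R_2)$ and $[R_1] \neq [R_2]$, and verify that the double coset $\mathrm{Sym}_-(R_1) \backslash \mathrm{Sym}(R_1) / \mathrm{Sym}_+(R_1)$ has exactly one element. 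Then, were $\mathscr L_+(R_1) = \mathscr L_+(R_2)$, both $[R_1]$ and $[R_2]$ would be distinct exchange classes sharing the full $\mathscr L_{\pm}$-type of $R_1$, contradicting the count in Theorem~\ref{exchange-count}; hence $\mathscr L_+(R_1) \neq \mathscr L_+(R_2)$.

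The equality $\mathscr L_-(R_1) = \mathscr L_-(R_2)$ is the first thing I would secure. As remarked after Theorem~\ref{exchange-count}, it can always be arranged while preserving the $\xi_+$-types by the bypass techniques of \cite{bypasses}; in practice I would exhibit an explicit finite sequence of exchange moves and type-II stabilizations and destabilizations carrying $R_1$ to $R_2$, which certifies the equality by the move characterization recalled in the introduction. The condition $[R_1] \neq [R_2]$ is then a finite computation: the exchange class $[R_1]$ consists of finitely many combinatorial types of rectangular diagrams reachable from $R_1$ by exchange moves, so I would generate this list by computer and confirm that no member is combinatorially equivalent to $R_2$.

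The crux, and the step I expect to be the main obstacle, is proving that the double coset $\mathrm{Sym}_-(R) \backslash \mathrm{Sym}(R) / \mathrm{Sym}_+(R)$ collapses to a single element. This requires first determining the symmetry group $\mathrm{Sym}(9_{48})$ of the oriented, numbered knot, and then identifying the subgroups $\mathrm{Sym}_+(R)$ and $\mathrm{Sym}_-(R)$ generated by the morphisms associated to type-I, respectively type-II, moves. Because $9_{48}$ has nontrivial symmetries, the delicate point is to decide precisely which of them are realized by sequences of exchange moves and type-I/type-II (de)stabilizations --- exactly the data encoded in $\mathrm{Sym}_{\pm}(R)$, which cannot be read off from the topological symmetry type alone. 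Once these subgroups are pinned down and shown to satisfy $\mathrm{Sym}_-(R)\cdot\mathrm{Sym}_+(R) = \mathrm{Sym}(R)$, the double coset is trivial and the method of \cite{dypra202?} applies.

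Finally, the inequality $\mathscr L_+(9_{48}^3) \neq \mathscr L_+(9_{48}^4)$ follows by the identical scheme. Since $\mathrm{Sym}(R)$ depends only on the underlying oriented knot type, the symmetry-group analysis is performed once; for the second pair it then remains to re-establish $\mathscr L_-(R_3) = \mathscr L_-(R_4)$, to check $[R_3] \neq [R_4]$ by enumeration, and to confirm that $\mathrm{Sym}_{\pm}(R_3)$ again make the double coset trivial. If an ambient symmetry of $9_{48}$ carries the first pair of diagrams to the second, these verifications reduce to transporting the first computation along that symmetry.
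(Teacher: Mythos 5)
Your overall scheme is the paper's: combine Theorem~\ref{exchange-count} with an $\mathscr L_-$-equality, a difference of exchange classes, and triviality of the double coset, the latter obtained by realizing the whole symmetry group by type~II moves. You also correctly locate the crux in determining $\mathrm{Sym}_\pm(R)$ inside $\mathrm{Sym}(R)$. The paper in fact proves the stronger statement $\mathrm{Sym}_-(R)=\mathrm{Sym}(R)$ (rather than $\mathrm{Sym}_-(R)\cdot\mathrm{Sym}_+(R)=\mathrm{Sym}(R)$), by exhibiting a flype on a symmetric diagram $9_{48}^5$ and verifying, via the Wirtinger presentation and an explicit computation of the induced automorphism, that the resulting morphism generates $\mathrm{Sym}(9_{48}^5)\cong\mathbb Z_6$ (Lemma~\ref{9_48-symmetry}); this is the substantive content that your plan only announces. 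The property is then transported to $9_{48}^1$ and $9_{48}^3$ by Lemma~\ref{symmetry-conjugacy}, which requires connecting the diagrams by exchange moves and type~II (de)stabilizations (Lemma~\ref{9_48-xi-minus}), not by an ambient symmetry of the knot as your last paragraph suggests.

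The one place where your plan, taken literally, would fail is the pair $(9_{48}^1,9_{48}^2)$: these diagrams have opposite nonzero rotation numbers, so $\mathscr L_-(9_{48}^1)\neq\mathscr L_-(9_{48}^2)$ and no choice of diagrams within these two $\xi_+$-classes can make the $\xi_-$-types equal \emph{and} keep both exchange classes computable from the given data without further work. The paper resolves this by introducing an auxiliary diagram $9_{48}^6$ with $\mathscr L_+(9_{48}^6)=\mathscr L_+(9_{48}^2)$ (via exchange moves and type~I moves) and $\mathscr L_-(9_{48}^6)=\mathscr L_-(9_{48}^1)$ (via exchange moves and type~II moves), and then proves $9_{48}^6\notin[9_{48}^1]$ by enumerating the seven diagrams of $[9_{48}^1]$ and exhibiting a combinatorial invariant (a pair of horizontal levels spanned by a vertical edge with exactly one occupied level between them) that all of them possess and $9_{48}^6$ lacks. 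Your hedge that the $\mathscr L_-$-equality ``can always be arranged'' by \cite{bypasses} points in the right direction, but without producing such an auxiliary diagram and certifying the two move sequences, this half of the proposition is not proved; by contrast, the pair $(9_{48}^3,9_{48}^4)$ is the one that goes through directly as you describe, since there $\mathscr L_-(9_{48}^3)=\mathscr L_-(9_{48}^4)$ holds for the diagrams as given.
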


\begin{figure}[h]
\includegraphics[scale=.2]{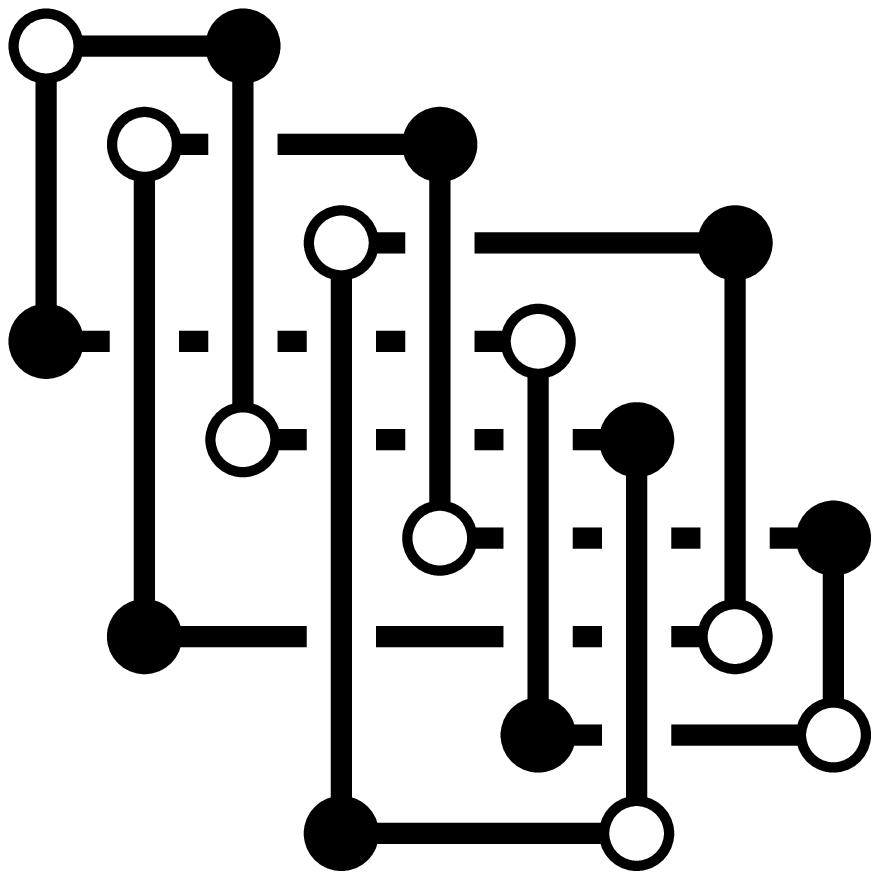}\hskip1cm
\includegraphics[scale=.2]{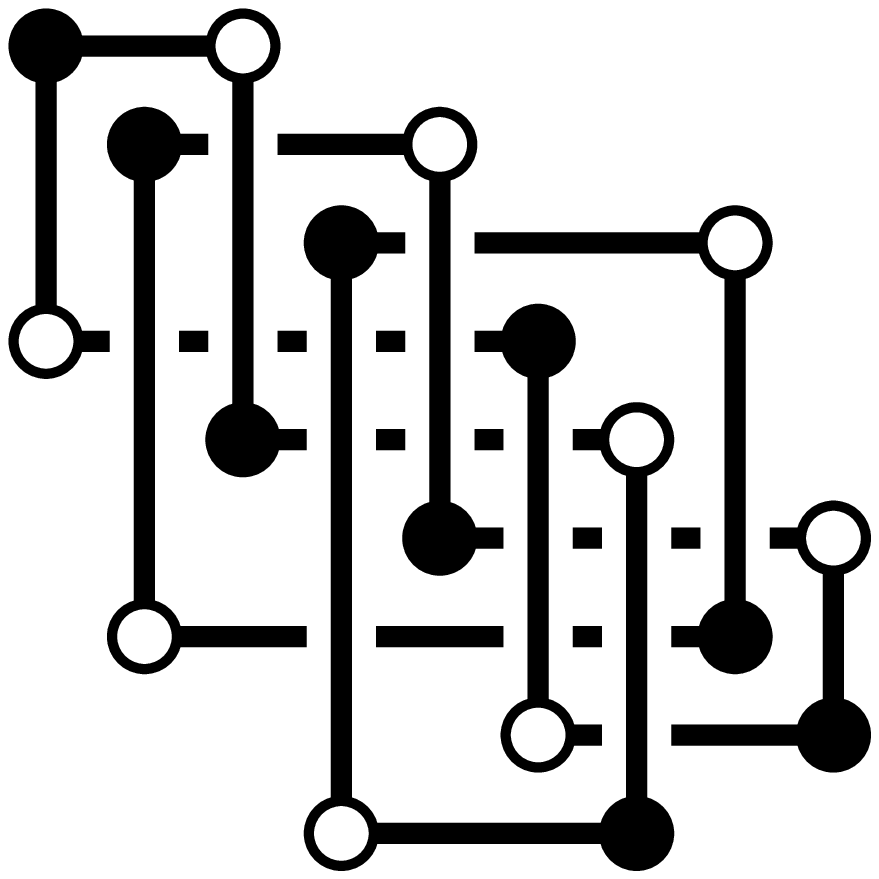}\hskip3cm
\includegraphics[scale=.2]{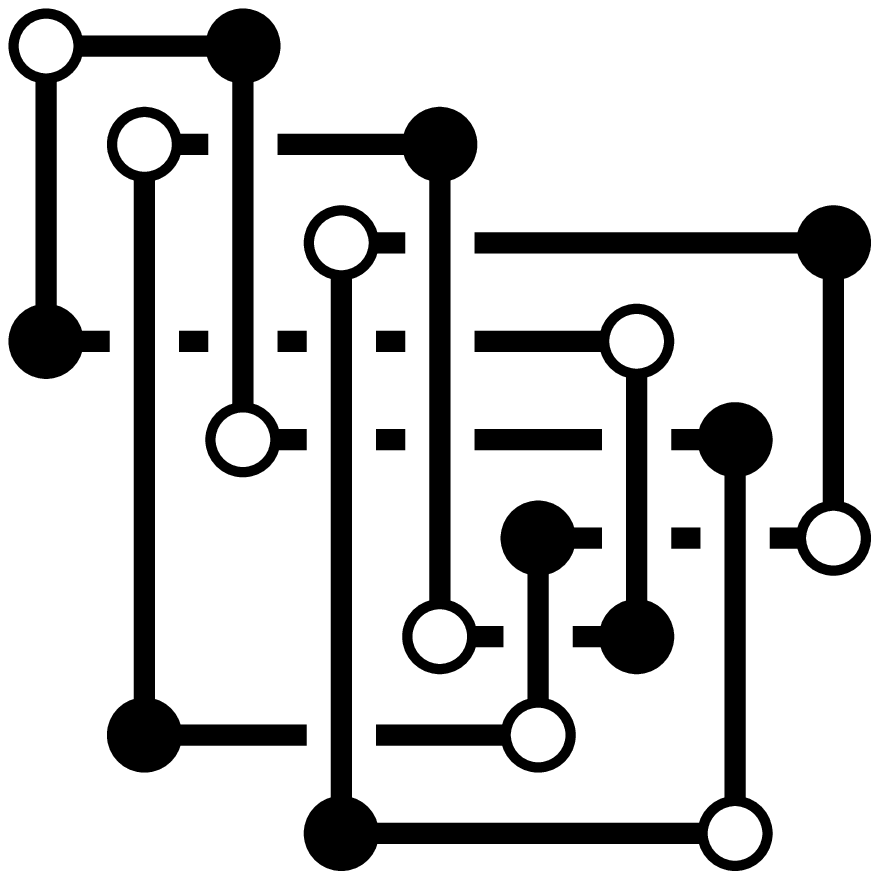}\hskip1cm
\includegraphics[scale=.2]{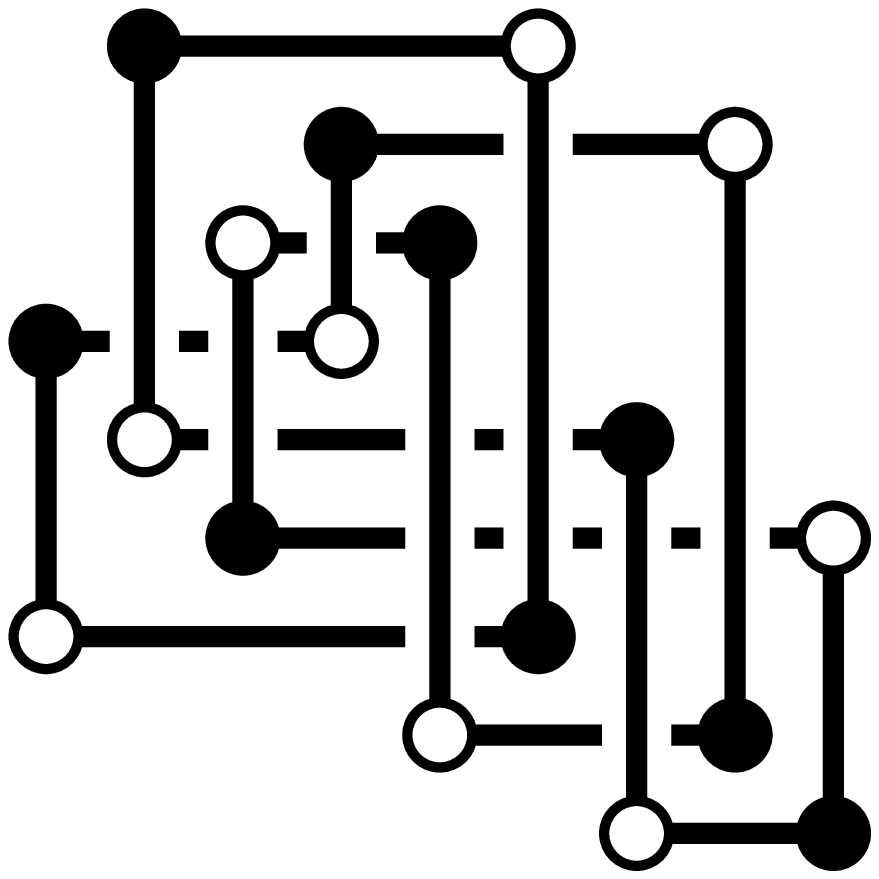}
\put(-320,-20){$9_{48}^1$}
\put(-245,-20){$9_{48}^2$}
\put(-105,-20){$9_{48}^3$}
\put(-35,-20){$9_{48}^4$}
\end{figure}

\begin{prop}
\label{main-result-10_136}
$\mathscr L_+ (10_{136}^1) \neq \mathscr L_+ (10_{136}^2)$ and $\mathscr L_+ (10_{136}^3) \neq \mathscr L_+ (10_{136}^4)$.
\end{prop}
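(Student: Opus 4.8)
The plan is to apply the scheme already used for Propositions~\ref{main-result-7_4} and~\ref{main-result-9_48}, now to the two pairs within topological type~$10_{136}$. For each pair I would exhibit rectangular diagrams $R$ and $R'$ — with $R$ representing $10_{136}^1$ (resp.\ $10_{136}^3$) and $R'$ representing $10_{136}^2$ (resp.\ $10_{136}^4$) at maximal Thurston--Bennequin number — arranged so that three conditions hold: (i)~$\mathscr L_-(R)=\mathscr L_-(R')$; (ii)~$[R]\neq[R']$; and (iii)~the double coset $\mathrm{Sym}_-(R)\backslash\mathrm{Sym}(R)/\mathrm{Sym}_+(R)$ is a single element, equivalently $\mathrm{Sym}(R)=\mathrm{Sym}_-(R)\,\mathrm{Sym}_+(R)$. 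Once (i)--(iii) are in place, Theorem~\ref{exchange-count} says that $[R]$ is the unique exchange class whose $\xi_+$- and $\xi_-$-types both coincide with those of $R$. Since $[R']$ agrees with $R$ on the $\xi_-$-type yet is a different exchange class, it cannot also agree on the $\xi_+$-type, and therefore $\mathscr L_+(R)\neq\mathscr L_+(R')$, which is the assertion.

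For condition~(i) I would first fix rectangular diagrams realizing the prescribed $\xi_+$-representatives of the two knots, and then connect them by a sequence built only from exchange moves and type~II (de)stabilizations. By the equivalence recalled in the introduction, any such sequence certifies $\mathscr L_-(R)=\mathscr L_-(R')$, while leaving each diagram free to be readjusted within its own $\xi_+$-class. The existence of diagrams admitting such a connection is precisely what the bypass technique of~\cite{bypasses} supplies; concretely I would, starting from the two given diagrams, search for a common diagram reachable from each by exchange and type~II moves, thereby confirming that the negative Legendrian types agree.

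Conditions~(ii) and~(iii) are finite computations. For~(ii) I would generate the complete list of combinatorial types of rectangular diagrams in the exchange class $[R]$ by iterating exchange moves until the list stabilizes, and then verify that no diagram combinatorially equivalent to $R'$ occurs in it; this enumeration also realizes $[R]$ as an explicit finite set on which $\mathrm{Sym}(R)$ acts. For~(iii) I would read off $\mathrm{Sym}(R)$ from the ambient symmetries of $10_{136}$, determine which of its elements are represented by compositions of type~I, respectively type~II, elementary moves so as to identify $\mathrm{Sym}_+(R)$ and $\mathrm{Sym}_-(R)$, and then check that every element of $\mathrm{Sym}(R)$ factors as a product from $\mathrm{Sym}_-(R)$ and $\mathrm{Sym}_+(R)$, so that the double coset collapses.

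The main obstacle I expect is establishing~(iii): pinning down the three groups precisely enough to force the double coset to be trivial, and in particular certifying that each generating symmetry genuinely lies in $\mathrm{Sym}_+(R)$ or $\mathrm{Sym}_-(R)$ by producing the corresponding move sequence rather than merely arguing abstractly. The enumeration behind~(ii) may be large, but it is mechanical. Finally, I anticipate that the second pair $(10_{136}^3,10_{136}^4)$ can be handled either by a parallel computation or, more economically, by transporting the argument for $(10_{136}^1,10_{136}^2)$ through an ambient symmetry of $10_{136}$ interchanging the two pairs, reducing the second claim to the first.
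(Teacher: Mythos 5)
Your proposal is correct and follows essentially the same route as the paper: conditions (i)--(iii) are exactly the hypotheses the paper feeds into Theorem~\ref{exchange-count}, with (i) obtained from the atlas/explicit type~II move sequences (Lemma~\ref{10_136-xi-minus}), (ii) by enumerating the exchange classes, and (iii) in the sharper form $\mathrm{Sym}(R)=\mathrm{Sym}_-(R)$, which the paper certifies by showing the $\mathbb Z_2$ generator $[\widehat r_{\scriptscriptstyle\diagdown}]$ of the symmetric diagram $10_{136}^5$ lies in $\mathrm{Sym}_-$ (Lemmas~\ref{10_136-symmetry} and~\ref{symmetry-morphism}) and transporting this to $10_{136}^1$ and $10_{136}^3$ via Lemma~\ref{symmetry-conjugacy}. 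The only divergence is cosmetic: the paper handles the pair $(10_{136}^3,10_{136}^4)$ by the same parallel computation rather than by an ambient symmetry interchanging the pairs.
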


\begin{figure}[h]
\includegraphics[scale=.2]{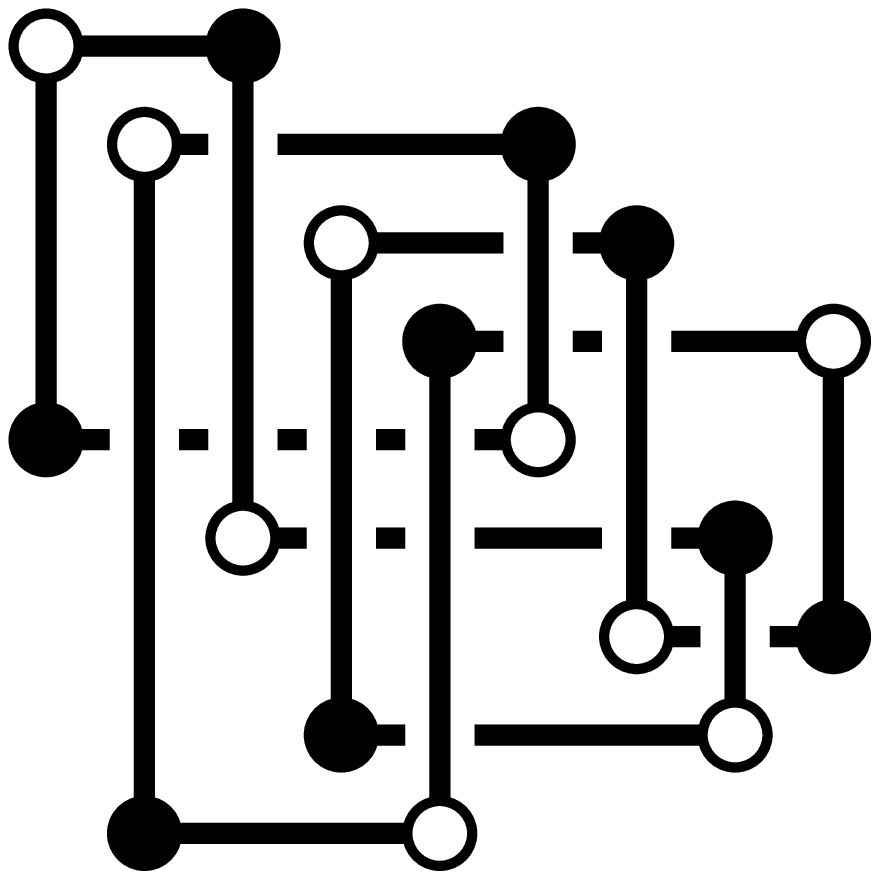}\hskip1cm
\includegraphics[scale=.2]{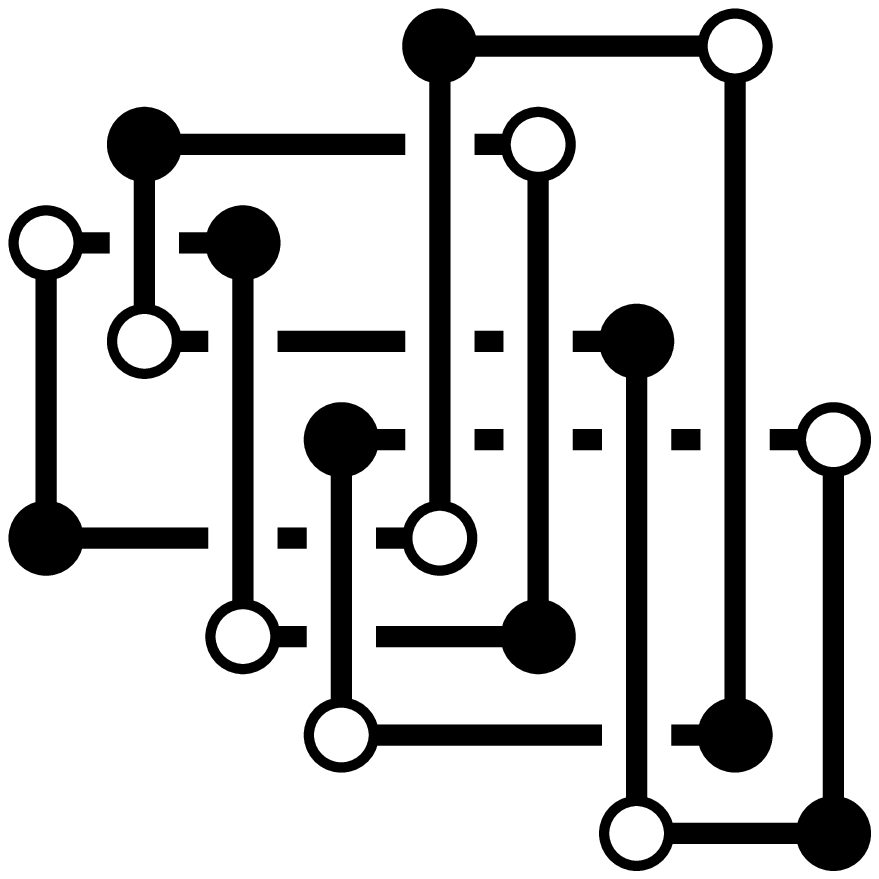}\hskip3cm
\includegraphics[scale=.2]{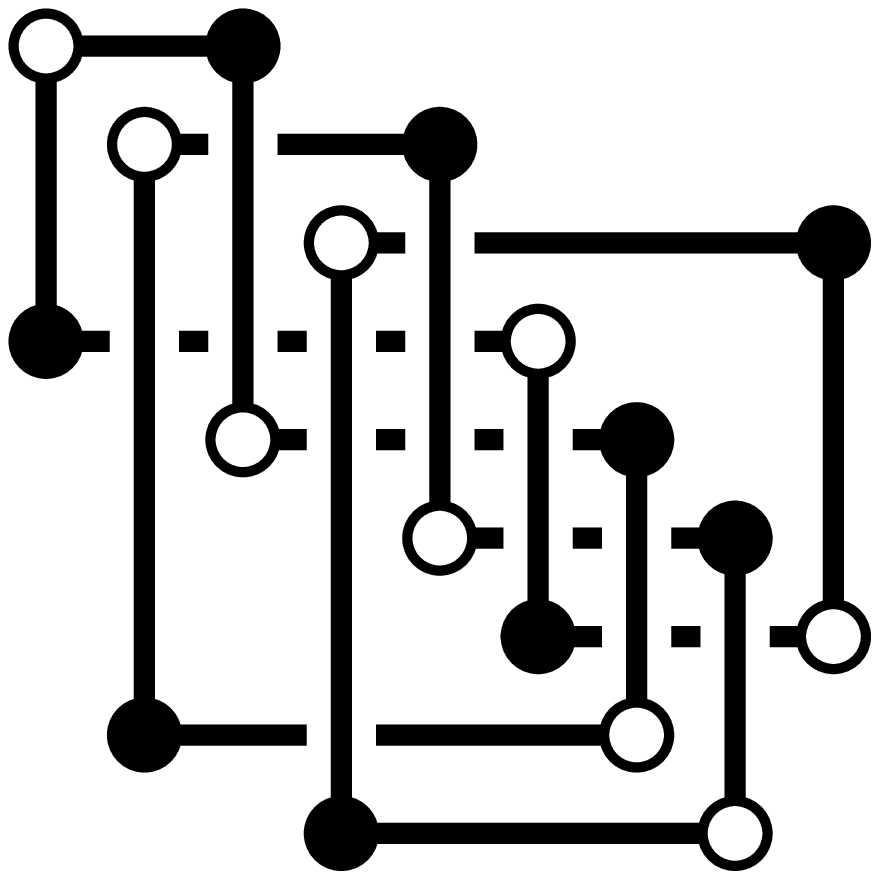}\hskip1cm
\includegraphics[scale=.2]{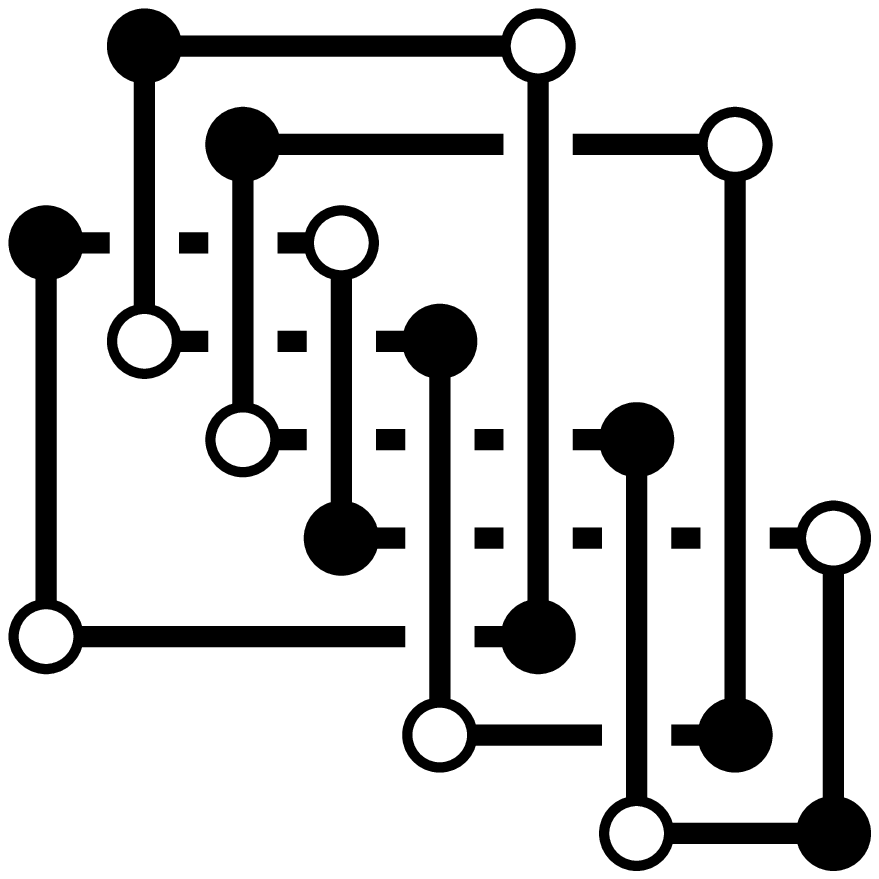}
\put(-320,-20){$10_{136}^1$}
\put(-245,-20){$10_{136}^2$}
\put(-105,-20){$10_{136}^3$}
\put(-35,-20){$10_{136}^4$}
\end{figure}

In section~\ref{realizing-symmetry} we show that $\mathrm{Sym}(R) = \mathrm{Sym}_-(R)$ for some rectangular diagram $R$ in each topological type $7_4,$ $9_{48}$ and $10_{136}$. In all these cases the symmetry group is cyclic and we represent a generator of each group by a composition of morphisms associated with exchange moves, stabilizations and destabilizations of type~II.  

Using this, in section~\ref{distinguishing} we show that $\mathrm{Sym}(R) = \mathrm{Sym}_-(R)$ for any diagram from the list $7_4^1, 9_{48}^1, 9_{48}^3, 10_{136}^1$ or $10_{136}^3$. Then we argue in the same way as in \cite{dysha202?} where the case of the trivial orientation-preserving symmetry group is considered.

\section{Some notations, definitions and conventions}
The reader is referred to \cite{dypra202?} for the terminology that we use here.

Let us recall that every elementary move is associated with a rectangle $r$ on the two-dimensional torus $\mathbb T^2$. Denote the set of all vertices of $r$ by $\partial r.$ The intersection of the rectangle with the set of the vertices of the rectangular diagram can only be a subset of $\partial r$. In the case of {\it exchange move} this intersection is a pair of two adjacent vertices of the rectangle, in the case of {\it stabilization} --- a single vertex, in the case of {\it destabilization} --- three vertices. We distinguish two types of (de)stabilizations, see Figure~\ref{elementary-moves}.

\begin{figure}[h]
\includegraphics[scale=.2]{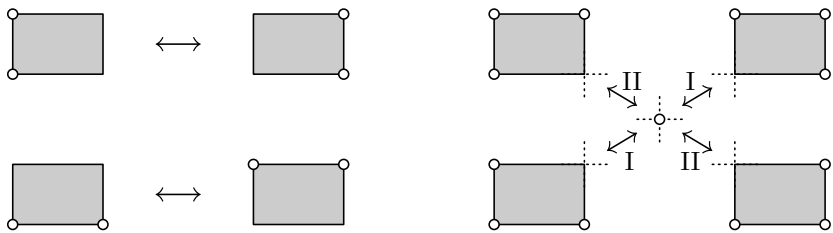}
\put(-55,53){I}
\put(-79,53){II}
\put(-78,23){I}
\put(-57,23){II}
\caption{Exchange moves and (de)stabilizations of type~I and~II}
\label{elementary-moves}
\end{figure}

To apply the move we delete from the rectangular diagram common vertices with the rectangle and add the other vertices of the rectangle. The obtained rectangular diagram inherits the orientation and numbering of the components.

We write $R_1\stackrel{s}{\mapsto}R_2$ for an elementary move $s$ from the diagram $R_1$ to $R_2$.

We associate a morphism with every elementary move using the following definition (see~\cite{dypra202?}).

\begin{defi}
Let~$L_1$ and~$L_2$ be two links in~$\mathbb S^3$, and let~$\eta$ be a morphism from~$L_1$
to~$L_2$. Suppose that there is an embedded two-disc~$d\subset\mathbb S^3$ such that
the following holds:
\begin{enumerate}
\item
the symmetric difference~$L_1\triangle L_2$ is a union of two open arcs~$\alpha\subset L_1$,
$\beta\subset L_2$;
\item
$\partial d=\overline{\alpha\cup\beta}$;
\item
the morphism~$\eta$ is represented by a homeomorphism~$(\mathbb S^3,L_1)\rightarrow(\mathbb S^3,
L_2)$ that is identical outside an open three-ball~$B$ containing the interior of~$d$
and intersecting~$L_1\cup L_2$ in~$\alpha\cup\beta$ (see Figure~\ref{alpha-beta-fig}).
\end{enumerate}
Then we say that the triple~$(L_1,L_2,\eta)$ is \emph{a $\mathbb D^2$-move associated with~$d$}.
\end{defi}
\begin{figure}[ht]
\centerline{\includegraphics{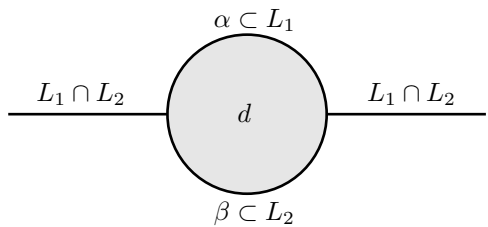}\put(-180,40){$L_1\cap L_2$}
\put(-55,40){$L_1\cap L_2$}\put(-104,32){$d$}\put(-113,68){$\alpha\subset L_1$}
\put(-113,-5){$\beta\subset L_2$}}
\caption{A $\mathbb D^2$-move}\label{alpha-beta-fig}
\end{figure}

We use the coordinate system $(\theta, \varphi, \tau)$, $\theta,\varphi\in\mathbb R/2\pi\mathbb Z$ and $\tau\in[0;1]$ on $\mathbb S^3$, where $(\theta_1,\varphi,0)=(\theta_2,\varphi,0)$ and $(\theta, \varphi_1, 1)=(\theta, \varphi_2, 1)$, coming from the join construction $\mathbb S^3 \cong \mathbb S^1 * \mathbb S^1.$ For a point $v(\theta_0, \varphi_0)\in\mathbb T^2$ we denote by $\widehat v$ the segment $\{\theta_0\}\times\{\varphi_0\}\times[0;1].$ For a finite subset $V\subset\mathbb T^2$ we denote by $\widehat V$ the union $\bigcup\limits_{v\in V} \widehat v.$ With a rectangle $r=[\theta_0;\theta_1]\times[\varphi_0;\varphi_1]$ we associate the tetrahedron $[\theta_0;\theta_1]*[\varphi_0;\varphi_1]$ and denote it by $\Delta_r$.

If an elementary move $R_1 \stackrel{s}{\mapsto} R_2$ is associated with the rectangle $r$ then with this move we associate a morphism $\widehat s$ from $\widehat{R_1}$ to $\widehat{R_2}$ such that the triple $(\widehat{R_1},\widehat{R_2},\widehat s)$ is a $\mathbb D^2$-move associated with some disk contained in the tetrahedron $\Delta_r$. Clearly the boundary of this disk is $\widehat {\partial r}.$

We orient connected components of a link $\widehat{R}$ by assigning to each vertex of the diagram $R$ a color: a vertex $v$ is black (white) if the edge $\widehat v$ is oriented in the direction of increasing (decreasing) of $\tau$. 

We denote by $r_{\scriptscriptstyle\diagdown}: \mathbb T^2 \to \mathbb T^2$ the reflection about the line $\theta+\varphi=0$: $r_{\scriptscriptstyle\diagdown}(\theta, \varphi) = (-\varphi, -\theta),$ and by $\widehat r_{\scriptscriptstyle\diagdown}:\mathbb S^3 \to \mathbb S^3$ --- the homeomorphism $\widehat r_{\scriptscriptstyle\diagdown}(\theta, \varphi, \tau) = (-\varphi, -\theta, 1-\tau).$

For a homeomorphism $h:\mathbb S^3\to\mathbb S^3$ and a link $L$ we denote by $[h]_L$ the morphism from $L$ to $h(L)$ which is the connected component containing $h$. We say that the homeomorphism $h$ {\it represents} the morphism $[h]_L.$

By a {\it combinatorial equivalence} between two rectangular diagrams $R_1$ and $R_2$ we mean a pair of orientation-preserving self-homeomorphisms of $\mathbb S^1,$ which acts on the torus $\mathbb T^2$ in the natural way, mapping $R_1$ to $R_2$ and preserving colors of vertices and numbering of connected components. The class of combinatorial equivalence of a diagram $R$ we denote also by $R$.

Every pair $(f,g)$ of orientation-preserving self-homeomorphisms of $\mathbb S^1$ induces a homeomorphism $\widehat{(f,g)}:\mathbb S^3\to\mathbb S^3$ by the rule: $(\theta,\varphi,\tau)\mapsto(f(\theta),g(\varphi),\tau).$ 

Unless otherwise specified, on pictures of rectangular diagrams we draw a portion of the torus $\mathbb T^2$ not containing any point with $\theta=0$ or $\varphi=0.$

\section{Realizing the symmetry group by type~II elementary moves}
\label{realizing-symmetry}

\begin{figure}[h]
\includegraphics[scale=.2]{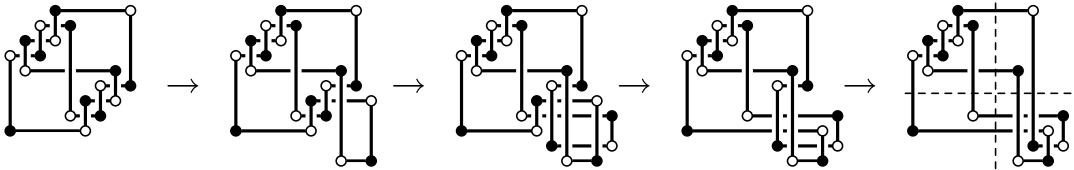}
\caption{A representation of a generator of the symmetry group in the topological type $7_4$ by a composition of two stabilizations and two destabilizations of type~II}
\label{7_4-flype}
\end{figure}

\begin{lemm}
\label{7_4-symmetry}
$\mathrm{Sym}_-(7_4^1) = \mathrm{Sym}(7_4^1).$
\end{lemm}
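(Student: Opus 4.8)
The plan is to use the trivial inclusion $\mathrm{Sym}_-(7_4^1)\subseteq\mathrm{Sym}(7_4^1)$ and establish the reverse inclusion. Since $\mathrm{Sym}(7_4^1)$ is cyclic, it suffices to exhibit a single generator $g$ of $\mathrm{Sym}(7_4^1)$ inside $\mathrm{Sym}_-(7_4^1)$. First I would pin down $\mathrm{Sym}(7_4^1)$ from the known symmetries of the topological type $7_4$: as $7_4$ is a hyperbolic two-bridge knot, the group of its orientation- and numbering-preserving self-morphisms is cyclic (of order two in the present normalization), generated by one morphism $g$ that is represented by an involution of the pair $(\mathbb S^3,\widehat{7_4^1})$.

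The core of the argument is to realize $g$ by the flype displayed in Figure~\ref{7_4-flype}. From that figure I would read off a closed sequence of elementary moves
\[
7_4^1 \stackrel{s_1}{\mapsto} R_1 \stackrel{s_2}{\mapsto} R_2 \stackrel{s_3}{\mapsto} R_3 \stackrel{s_4}{\mapsto} 7_4^1,
\]
in which $s_1,s_2$ are stabilizations and $s_3,s_4$ destabilizations, all of type~II. For each move I would verify the type by checking, against Figure~\ref{elementary-moves}, that the associated rectangle meets the diagram in a single vertex (for the stabilizations) or in three vertices (for the destabilizations), and in the type~II configuration in each case. Because the sequence begins and ends at the same combinatorial class $7_4^1$, the composite $\mu:=\widehat{s_4}\circ\widehat{s_3}\circ\widehat{s_2}\circ\widehat{s_1}$ is a self-morphism of $\widehat{7_4^1}$, and since every factor is associated with a type~II move, $\mu$ lies in $\mathrm{Sym}_-(7_4^1)$ directly by the definition of that subgroup.

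It remains to identify $\mu$ with the generator $g$, and this I expect to be the main obstacle. The composite is manifestly an involution arising from a flype, so inside the cyclic group $\mathrm{Sym}(7_4^1)$ it is either the identity or $g$; the real work is to exclude the identity. I would do this by describing $\mu$ concretely as a homeomorphism of $\mathbb S^3$ assembled from the disks contained in the tetrahedra $\Delta_{r}$ of the four moves, and comparing it with the model involution representing $g$ (for instance the normalization of $\widehat r_{\scriptscriptstyle\diagdown}$ adapted to this diagram), checking that the two act in the same way on $(\mathbb S^3,\widehat{7_4^1})$ up to isotopy. Equivalently, one may transport a base point along $\widehat{7_4^1}$ through the loop of moves and confirm that the resulting self-homeomorphism is not isotopic to the identity rel the knot --- that it genuinely interchanges the two arcs swapped by the flype. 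Once $\mu=g$ is established, we obtain $\langle g\rangle=\mathrm{Sym}(7_4^1)\subseteq\mathrm{Sym}_-(7_4^1)$, which together with the reverse inclusion yields $\mathrm{Sym}_-(7_4^1)=\mathrm{Sym}(7_4^1)$.
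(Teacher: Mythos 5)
There is a genuine gap, and it lies in your identification of $\mathrm{Sym}(7_4^1)$. You assert that the orientation-preserving symmetry group of $7_4$ is cyclic of order two and that the flype composite is ``manifestly an involution.'' In fact, $7_4$ is the two-bridge knot of type $(15,11)$, and by Sakuma's Theorem~4.1 the symmetry group in the sense used here is $\mathbb Z_4$, not $\mathbb Z_2$. The morphism $\eta$ read off from Figure~\ref{7_4-flype} is not an involution: its square is the nontrivial element represented by $\widehat r_{\scriptscriptstyle\diagdown}$. Consequently your strategy --- exhibit one involution $g$ in $\mathrm{Sym}_-(7_4^1)$ and conclude $\langle g\rangle=\mathrm{Sym}(7_4^1)$ --- cannot close: even if carried out perfectly it would only place the order-two subgroup $\{1,\eta^2\}$ of $\mathbb Z_4$ inside $\mathrm{Sym}_-(7_4^1)$, leaving an index-two subgroup unaccounted for. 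What must actually be proved is that $\eta$ itself \emph{generates} $\mathbb Z_4$, i.e.\ that $\eta^2\neq 1$; this is exactly the content of the paper's computation, which traces the action of a homeomorphism representing $\eta^2$ on the Wirtinger generators of $\pi_1(\mathbb S^3\setminus\widehat{7_4^1})$, identifies it with the action of $\widehat r_{\scriptscriptstyle\diagdown}$ via Waldhausen, and then rules out triviality using the finite order of $\widehat r_{\scriptscriptstyle\diagdown}$ together with the fact that only torus knots have knot groups with nontrivial center.

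Two smaller points. First, the sequence of moves in Figure~\ref{7_4-flype} does not literally return to the diagram $7_4^1$; it ends at a diagram that is only combinatorially equivalent to it, so the definition of $\eta$ must include the morphism induced by that combinatorial equivalence, and one needs Lemma~\ref{combinatorial-morphism} to know this extra morphism decomposes into exchange moves (hence stays in $\mathrm{Sym}_-$). Second, your proposed method for excluding the identity (comparing the action on the complement, or on loops, with a model symmetry) is essentially the right tool --- it is what the paper does --- but it must be applied to $\eta^2$, not to $\eta$, and the nontriviality of the resulting finite-order symmetry still requires an argument such as Lemma~\ref{trivial-symmetry}; ``it genuinely interchanges the two arcs swapped by the flype'' is not by itself a proof that the morphism is nontrivial in $\mathrm{Sym}(7_4^1)$.
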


\begin{proof}
The diagram $7_4^1$ is combinatorially equivalent to both diagrams on the left and on the right of Figure~\ref{7_4-flype}. There exists a combinatorial equivalence between the mentioned diagrams in this figure which maps the dashed lines to the lines $\theta=0$ and $\varphi=0.$ A composition of all morphisms in Figure~\ref{7_4-flype} and a morphism induced by the combinatorial equivalence we denote by~$\eta$. By Lemma~\ref{combinatorial-morphism}, $\eta\in\mathrm{Sym}_-(7_4^1).$

The knot $7_4$ is two-bridge of type $(15,11)$. By Theorem~4.1 of \cite{sak90} its symmetry group is isomorphic to $\mathbb Z_4.$ Let us prove that $\eta$ is a generator of this group. It is sufficient to prove that $\eta^2$ is not trivial.


\begin{figure}[h]
\includegraphics[scale=.3]{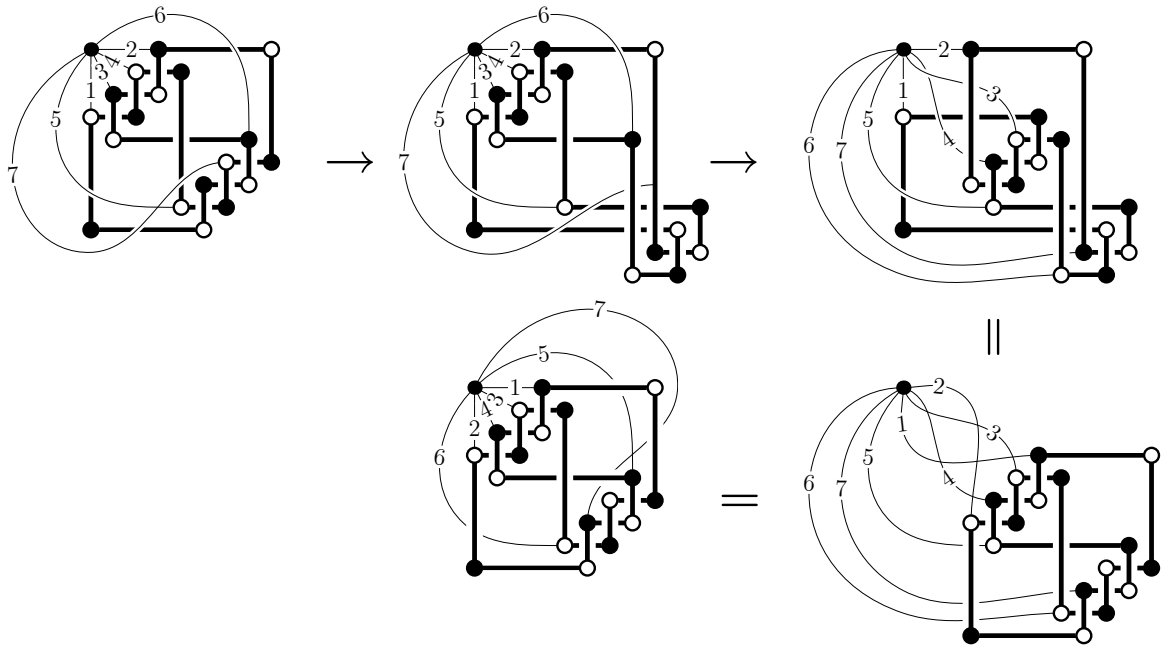}
\caption{The action on the fundamental group of a homeomorphism representing $\eta^2$}
\label{loops7_4}
\end{figure}

We represent the standard Wirtinger generators of the group $G(7_4) = \pi_1(\mathbb{S}^3 \setminus \widehat{7_4^1} , p)$ by loops on the left of Figure~\ref{loops7_4}. The coordinates of the base point $p$ are $(\pi, \pi, \dfrac{1}{2})$.

We illustrate each loop by an arc connecting the base point $p$ with some point on the knot. The corresponding loop consists of three parts. The second part is a small loop in the neighborhood of the point on the knot which has a linking number $+1$ with the knot. Both the first part and the third part of the loop are the portion of the arc connecting the base point with the small loop. 


The action of a homeomorphism representing the $\eta^2$ on the loops is shown  in Figure~\ref{loops7_4}. The "=" means a homeomorphism isotopic (relative to the link) to a homeomorphism induced by the combinatorial equivalence. The "$\to$" means a homeomorphism representing $\eta$ but without an application of the combinatorial equivalence which we postpone. Before and after application of a homeomorphism we are free to homotope the loops to avoid their intersection with disks of $\mathbb D^2$-moves. For each elementary move we assume that the ball $B$ in the definition of a $\mathbb D^2$-move is sufficiently small so that each loop stay fixed under the corresponding homeomorphism. 
%

It is clear from this figure that the action on the fundamental group of some homeomorphism representing $\eta^2$ coincides with the action of $\widehat r_{\scriptscriptstyle\diagdown}.$ By \cite[Corollary 7.5]{wald68} this homeomorphism is isotopic to $\widehat r_{\scriptscriptstyle\diagdown}.$ Since $\widehat r_{\scriptscriptstyle\diagdown}$ is of finite order, by Lemma~\ref{trivial-symmetry} the morphism $[\widehat r_{\scriptscriptstyle\diagdown}]_{\widehat{7_4^1}}$ is nontrivial because only torus knots have nontrivial center (see~\cite{buzie}) and the knot $7_4$ is not a torus knot.

\begin{rema}
In fact, it is unnecessary here to use the result of Waldhausen. Suppose that a perspective projection of a link to some plane is regular. The projection is a union of open arcs separated by undercrossings and  of simple closed curves. Connect the center of the projection with each arc and with each simple closed curve by a straight line segment. This union of the link with the segments we call {\it Wirtinger spatial graph of a link}. It is known that the complement to a Wirtinger spatial graph is a handlebody (this fact is especially familiar when the link is the Hopf link). In Figure~\ref{loops7_4} we prove that $\eta^2$ can be represented by a homeomorphism which coincides with the symmetry $\widehat r_{\scriptscriptstyle\diagdown}$ on the Wirtinger spatial graph. Since any homeomorphism of a handlebody which is identical on the boundary is isotopic to the identity, $\eta^2$ is represented by the symmetry.
\end{rema}

\end{proof}


\begin{figure}[h]
\includegraphics[scale=.2]{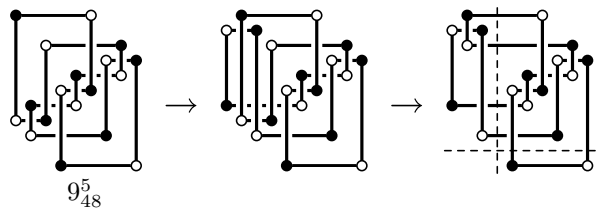}
\put(-200,-10){$9_{48}^5$}
\caption{A representation of a generator of the symmetry group in the topological type $9_{48}$ by a composition of a stabilization and a destabilization of type~II}
\label{9_48-flype}
\end{figure}

\begin{lemm}
\label{9_48-symmetry}
Let $9_{48}^5$ be the diagram on the left of Figure~\ref{9_48-flype}. Then $\mathrm{Sym}_-(9_{48}^5) = \mathrm{Sym}(9_{48}^5).$
\end{lemm}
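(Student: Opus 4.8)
The plan is to follow verbatim the strategy of the proof of Lemma~\ref{7_4-symmetry}. First I would observe that $9_{48}^5$ is combinatorially equivalent to both the left and the right diagram of Figure~\ref{9_48-flype}, and that there is a combinatorial equivalence between these two diagrams carrying the dashed lines onto the lines $\theta=0$ and $\varphi=0$. Composing the morphisms associated with the single type~II stabilization and the single type~II destabilization of the figure with the morphism induced by this combinatorial equivalence produces a morphism $\eta\in\mathrm{Sym}(9_{48}^5)$, and Lemma~\ref{combinatorial-morphism} guarantees that $\eta\in\mathrm{Sym}_-(9_{48}^5)$. It then suffices to prove that $\eta$ generates the whole group $\mathrm{Sym}(9_{48}^5)$.

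The step I expect to be the principal obstacle, and the main point of departure from the $7_4$ case, is the identification of the symmetry group: $9_{48}$ is not a two-bridge knot, so the computation via \cite{sak90} used for $7_4$ is unavailable. Instead I would determine $\mathrm{Sym}(9_{48})$ from the known description of the symmetries of $9_{48}$, whose complement is hyperbolic, so that the group is finite and is cut out as the orientation- and direction-preserving part of the isometry group of the complement. I expect the outcome to be a cyclic group of order two, generated by a period-two rotation, which is exactly the behaviour realized by $\widehat r_{\scriptscriptstyle\diagdown}$; note that the single flype of Figure~\ref{9_48-flype} realizes this order-two generator directly, in contrast to the order-four generator of the $7_4$ case which required a double flype. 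Granting this identification, to show that $\eta$ is a generator it is enough to verify that $\eta$ is nontrivial.

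To prove $\eta\neq 1$ I would argue exactly as in the Remark following Lemma~\ref{7_4-symmetry}. Representing the Wirtinger generators of $G(9_{48})=\pi_1(\mathbb S^3\setminus\widehat{9_{48}^5})$ by loops and tracking their images under a homeomorphism representing $\eta$ (taking, as before, the balls $B$ of the $\mathbb D^2$-moves small enough that each loop stays fixed, and postponing the combinatorial equivalence), I would check that the induced action on the fundamental group coincides with that of $\widehat r_{\scriptscriptstyle\diagdown}$. By \cite[Corollary 7.5]{wald68} the representing homeomorphism is then isotopic to $\widehat r_{\scriptscriptstyle\diagdown}$; equivalently, via the Wirtinger spatial graph argument, $\eta$ is directly represented by $\widehat r_{\scriptscriptstyle\diagdown}$. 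Finally, since $\widehat r_{\scriptscriptstyle\diagdown}$ has finite order and $9_{48}$ is not a torus knot, Lemma~\ref{trivial-symmetry} shows that $[\widehat r_{\scriptscriptstyle\diagdown}]_{\widehat{9_{48}^5}}$ is nontrivial, so $\eta$ generates $\mathrm{Sym}(9_{48}^5)$ and $\mathrm{Sym}_-(9_{48}^5)=\mathrm{Sym}(9_{48}^5)$. The delicate points I anticipate are the bookkeeping of the loop images through the flype and confirming that the group contains no symmetry preserving both orientations outside $\langle\eta\rangle$.
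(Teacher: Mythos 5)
Your proposal founders on the identification of the symmetry group, which is exactly the point where this lemma diverges from the $7_4$ case. The group $\mathrm{Sym}(9_{48}^5)$ is not $\mathbb Z_2$: the knot $9_{48}$ is a nonelliptic Montesinos knot of type $(-1/3,2/3,2/3)$, and by \cite[Theorem~1.3]{boizi} its symmetry group (in the sense of this paper) is cyclic of order \emph{six}. Consequently the flype of Figure~\ref{9_48-flype} must realize a generator of $\mathbb Z_6$, and proving $\eta\neq 1$ is far from sufficient: in $\mathbb Z_6$ you must rule out that $\eta$ has order $2$ or $3$, i.e.\ you must show that both $\eta^2$ and $\eta^3$ are nontrivial. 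Even if every step of your plan were carried out, it would only establish $\eta\neq1$ and hence would not prove the lemma. Relatedly, your intended mechanism for nontriviality --- checking that a homeomorphism representing $\eta$ acts on the fundamental group like $\widehat r_{\scriptscriptstyle\diagdown}$ and invoking Lemma~\ref{trivial-symmetry} --- cannot apply to a generator of $\mathbb Z_6$, since $\widehat r_{\scriptscriptstyle\diagdown}$ is an involution; at best it could address the order-two element $\eta^3$, and the paper does not claim that $9_{48}^5$ is $r_{\scriptscriptstyle\diagdown}$-symmetric.

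After the construction of $\eta\in\mathrm{Sym}_-(9_{48}^5)$ (which you anticipate correctly), the actual argument is therefore structurally different. One represents $\eta$ by a base-point-preserving homeomorphism $T$, computes the action of $T_\ast$ on the Wirtinger generators of $G(9_{48})$, and then: (i) shows that $T_\ast^6$ is conjugation by $x_1x_8$ and rules out $T_\ast^2$ being inner by a linking-number obstruction (if $T_\ast^2$ were conjugation by $z$, then $z^3=x_1x_8$ by triviality of the center, but the linking number of $z^3$ with the knot is divisible by $3$ while that of $x_1x_8$ equals $2$); (ii) shows that the automorphism $S(x)=x_1^{-1}T_\ast^3(x)x_1$ is an involution, so that if it were inner it would be conjugation by a torsion element of the knot group, hence by $1$, which would force $G(9_{48})\cong\mathbb Z$ and contradict Dehn's Lemma. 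You would need to supply arguments of this kind for both powers; as written, your proposal has a genuine gap.
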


\begin{proof}
There exists a combinatorial equivalence between the diagram on the right of Figure~\ref{9_48-flype} and the diagram on the left of the same figure which maps the dashed lines to the lines $\theta=0$ and $\varphi=0.$ A composition of all morphisms in Figure~\ref{9_48-flype} and a morphism induced by the combinatorial equivalence we denote by~$\eta$. By Lemma~\ref{combinatorial-morphism}, $\eta\in\mathrm{Sym}_-(9_{48}^5).$

The knot $9_{48}$ is nonelliptic Montesinos of type $(-1/3,2/3,2/3)$. By \cite[Theorem~1.3]{boizi} its symmetry group is isomorphic to $\mathbb Z_6.$ Let us prove that $\eta$ is a generator of this group. It is sufficient to prove that $\eta^2$ and $\eta^3$ are not trivial. We represent the morphism $\eta$ by a homeomorphism $T$ which preserves the base point $p$. We consider the action of $T$ on the fundamental group $G(9_{48})$ of the knot complement and prove that $T^2_*$ and $T^3_*$ are not inner homomorphisms.

We use conventions in the proof of Lemma~\ref{7_4-symmetry}.

In Figure~\ref{9_48loops} we define the homeomorphism $T$ to be a composition of three homeomorphisms fixing the base point. The first is the composition of homeomorphisms representing morphisms associated with the stabilization and destabilization. The second (the third) is isotopic relative to the link to the homeomorphism induced by a combinatorial equivalence which keeps horizontal (vertical) levels fixed.

\begin{figure}[h]
\begin{tabular}{lcr}
\includegraphics[scale=0.25]{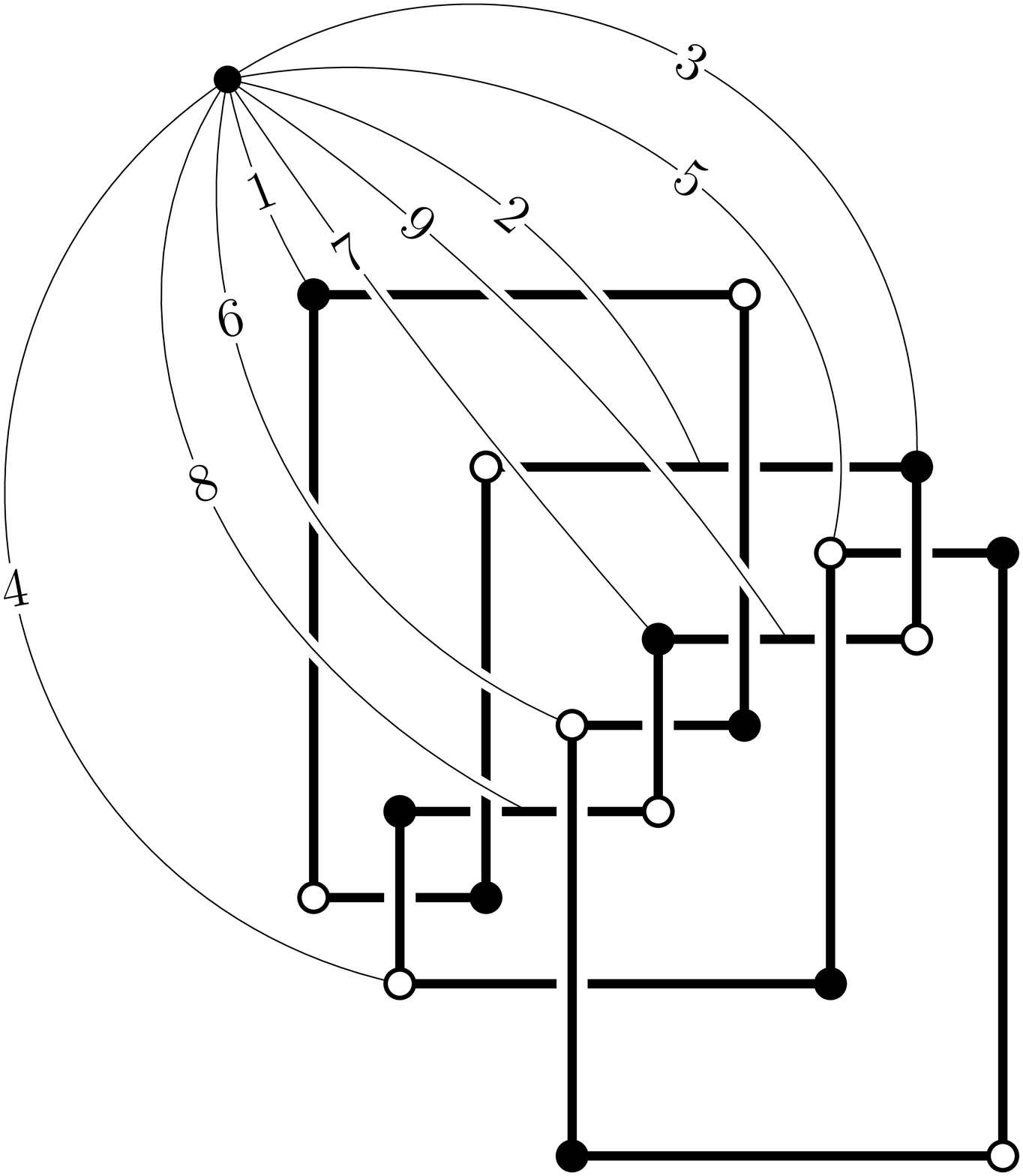}&$\quad\quad\quad$&\includegraphics[scale=0.25]{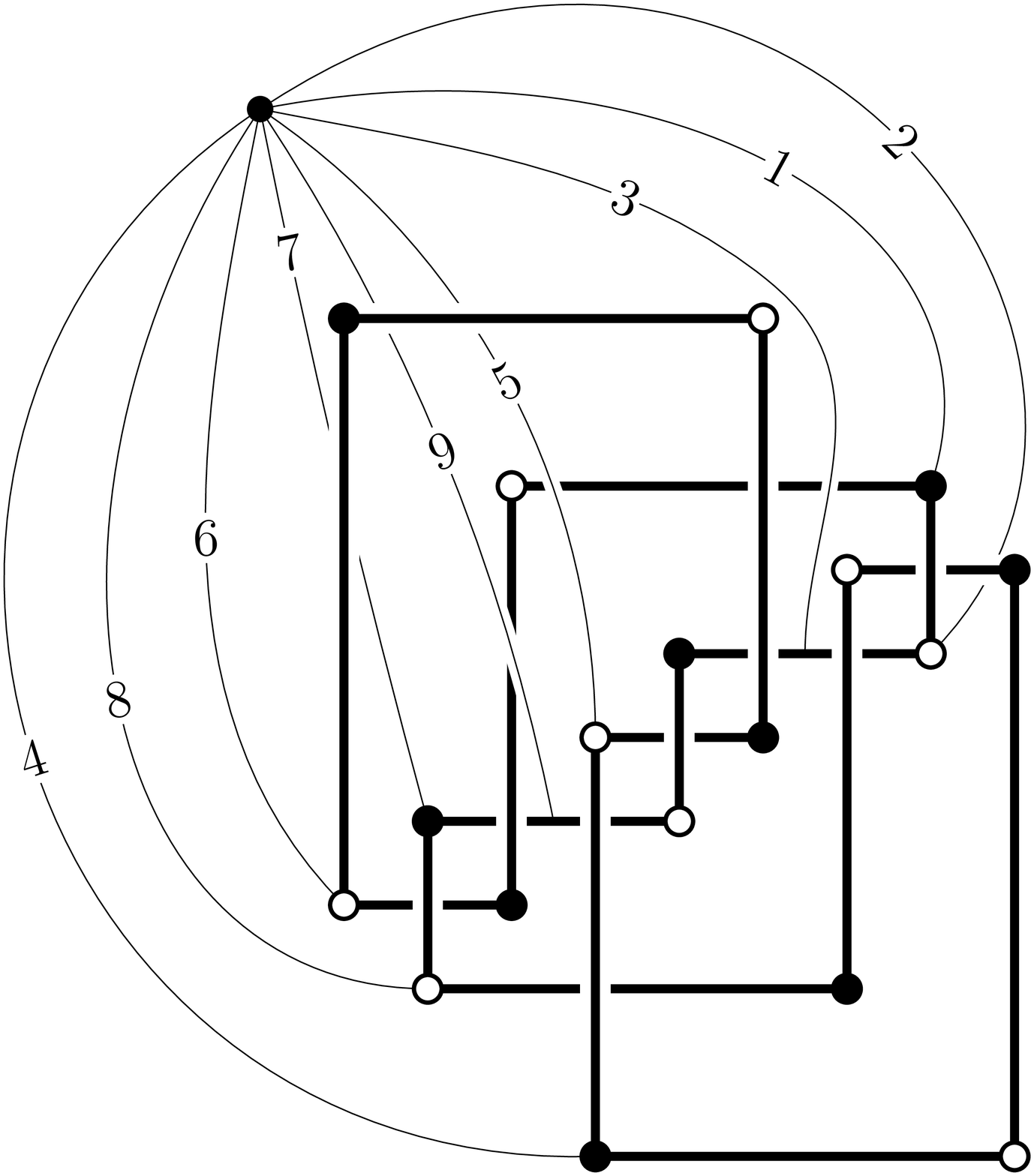}\\
\\
\\
\\
\includegraphics[scale=0.25]{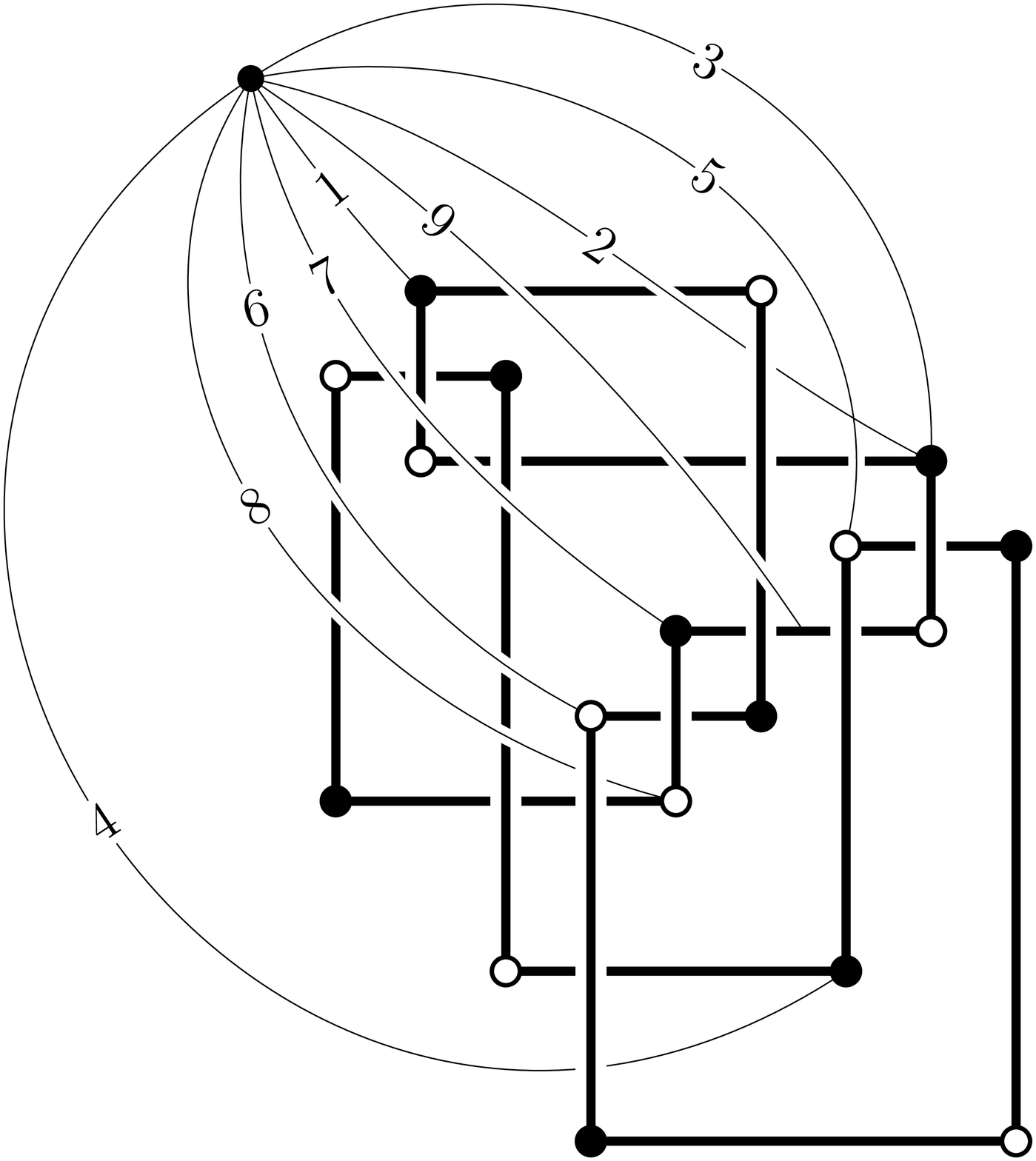}&$\quad\quad\quad$&\includegraphics[scale=0.25]{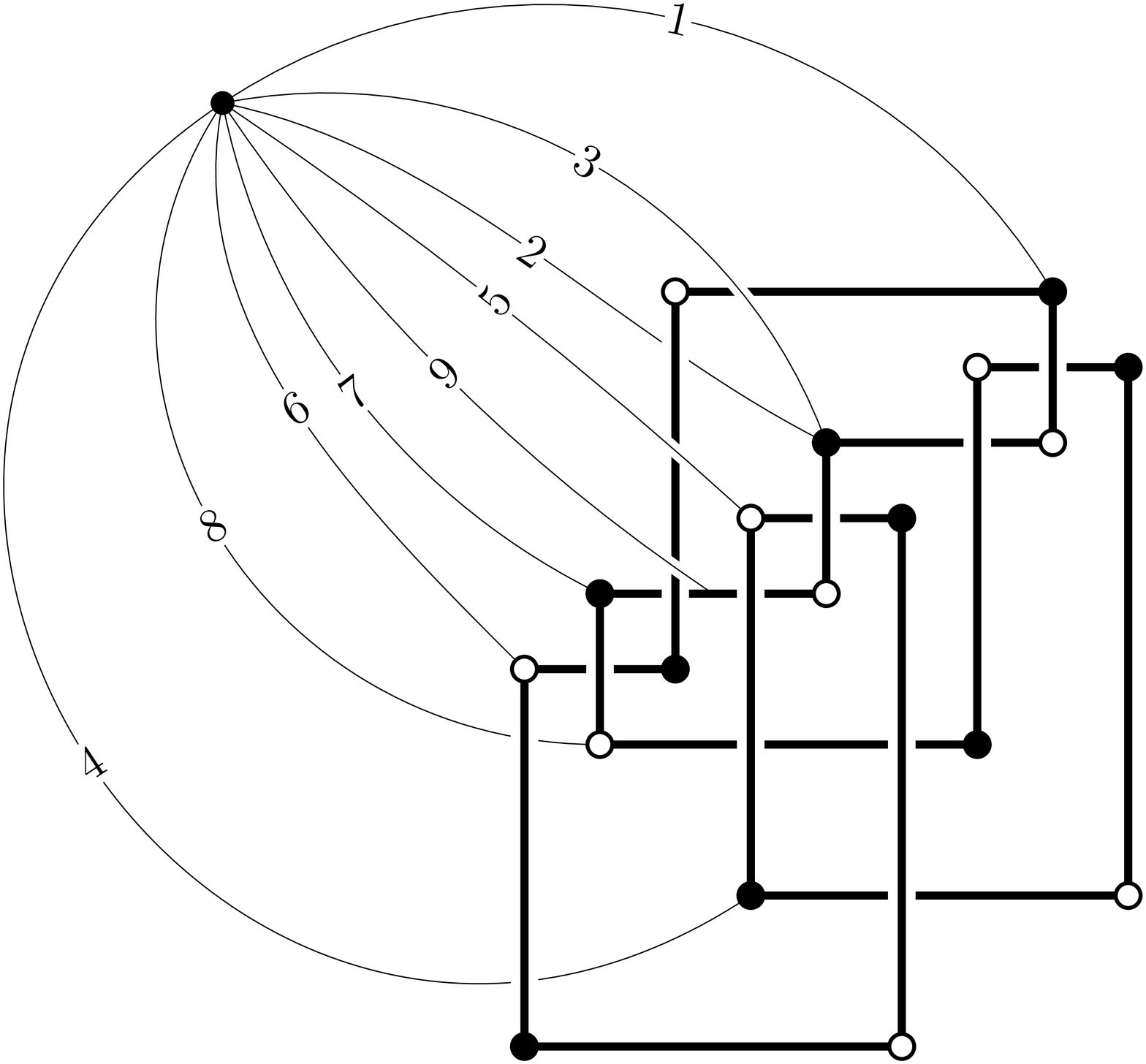}
\end{tabular}
\put(-250,120){$\longmapsto$}
\put(-246,125){$T$}
\put(-250,-120){$\longmapsto$}
\put(-350,10){\rotatebox{-90}{${\longmapsto}$}}
\put(-83,-5){\rotatebox{90}{${\longmapsto}$}}
\caption{The action of $T$ on the fundamental group}
\label{9_48loops}
\end{figure}

The set of defining relations for the Wirtinger presentation in terms of these generators:
\begin{multline}\label{Wirtinger9_48}
\begin{split}
x_1x_2x_1^{-1} &= x_3,\quad x_4x_2x_4^{-1} = x_1, \quad x_2x_4x_2^{-1} = x_8, \quad x_6x_8x_6^{-1} = x_7,\quad x_7x_1x_7^{-1} = x_6, \\ x_1x_7x_1^{-1} &= x_9, \quad x_5x_9x_5^{-1} = x_3, \quad x_3x_6x_3^{-1} = x_5, \quad x_6x_4x_6^{-1} = x_5.
\end{split}
\end{multline}

\begin{rema}\label{gener}
It is clear from these relations, that $G(9_{48})$ is generated by the elements $x_1,x_2,x_7.$
\end{rema}

%
%
%

The corresponding action of $T_\ast$ on $G(9_{48})$ is the following:
\begin{multline}\label{Action9_48_1}
\begin{split}
T_\ast(x_1) &= x_3, \quad T_\ast(x_2) = x_6^{-1}x_3x_6, \quad T_\ast(x_3) = x_9, \quad T_\ast(x_4) = x_6, \\ T_\ast(x_5) &= x_1x_6x_1^{-1}, \quad T_\ast(x_6) = x_1, \quad T_\ast(x_7) = x_1x_4x_1^{-1}, \quad T_\ast(x_8) = x_4, \quad T_\ast(x_9) = x_1x_8x_1^{-1}.
\end{split}	
\end{multline}

From~\eqref{Wirtinger9_48} and~\eqref{Action9_48_1} we obtain the action  $T^3_\ast$ on $G(9_{48})$
\begin{multline}\label{Action9_48_3}
\begin{split}
T_\ast^3(x_1) &= x_1x_8x_1^{-1}, \quad T_\ast^3(x_2) = T_\ast^3(x_4^{-1}x_1x_4) = x_3^{-1}x_1x_8x_1^{-1}x_3 = x_1x_2^{-1}x_8x_2x_1^{-1} =x_1x_4x_1^{-1}, \\ T_\ast^3(x_3) &= x_3x_4x_3^{-1}, \quad T_\ast^3(x_4) = x_3, \quad T_\ast^3(x_5) = T_\ast^3(x_6x_4x_6^{-1}) =  x_9x_3x_9^{-1}, \quad T_\ast^3(x_6) = x_9, \\ T_\ast^3(x_7) &=x_9x_1x_9^{-1} = x_1x_7x_1x_7^{-1}x_1^{-1} = x_1x_6x_1^{-1}, \quad T_\ast^3(x_8) = x_1, \quad T_\ast^3(x_9) = x_9x_6x_9^{-1}.
\end{split}	
\end{multline}

It follows that
\begin{multline}\label{Action9_48_6}
\begin{split}
T_\ast^6(x_1) &= (x_1x_8)x_1(x_1x_8)^{-1}, \quad T_\ast^6(x_2) = x_1x_8x_1^{-1}x_3x_1x_8^{-1}x_1^{-1} = (x_1x_8)x_2(x_1x_8)^{-1}, \\  T_\ast^6(x_7) &=x_1x_8x_1^{-1}x_9x_1x_8^{-1}x_1^{-1} = (x_1x_8)x_7(x_1x_8)^{-1}.
\end{split}	
\end{multline}

Therefore the action of $T_\ast^6$ on $G(9_{48})$ is the conjugation by $x_1x_8$ by remark~\ref{gener}.


%


If the action of $T_\ast^2$ on $G(9_{48})$ is the conjugation by an element $z$, then $z^3 = x_1x_8$, because the center of $G(9_{48})$ is trivial. But the linking number of $z^3$ with $\widehat{9_{48}^5}$  is divisible by $3$, while the linking number of $x_1x_8$ equals 2.

Denote by $S$ the following automorphism of $G(9_{48})$:

\begin{equation}
S(x) = x_1^{-1}T_\ast^3(x)x_1.
\end{equation}

From~\eqref{Action9_48_3} we obtain the action of $S$ on $G(9_{48})$

\begin{multline}\label{ActionS}
\begin{split}
S(x_1) &= x_8, \quad S(x_2) = x_4, \quad S(x_3) = x_1^{-1}x_3x_4x_3^{-1}x_1, \quad S(x_4) = x_2, \\ S(x_5) &= x_1^{-1}x_9x_3x_9^{-1}x_1, \quad S(x_6) = x_7, \quad S(x_7) = x_6, \quad S(x_8) = x_1, \quad S(x_9) = x_1^{-1}x_9x_6x_9^{-1}x_1
\end{split}	
\end{multline}
and that $S^2$ is the trivial automorphism of $G(9_{48})$. It is sufficient to check that $S^2$ is the identity for the generators $x_1, x_2, x_7$, see Remark~\ref{gener}.

If the action of $S$ on $G(9_{48})$ is the conjugation by an element $w$, then $w^2 = 1$, because the center of $G(9_{48})$ is trivial. The torsion group of the knot group is trivial (see for example~\cite[Chapter~9]{hemp}) so $w = 1$.
If $S$ is trivial, then 
$x_1 = x_8$, $x_2 = x_4$ and $x_6 = x_7$. It follows from the relations~(\ref{Wirtinger9_48}) that all generators in this case are equal and $G(9_{48}) = \mathbb{Z}$.  By Dehn's Lemma (see~\cite{papa}) we obtain a contradiction because the knot $9_{48}$ is nontrivial.


\end{proof}

\begin{figure}[h]
\includegraphics[scale=.2]{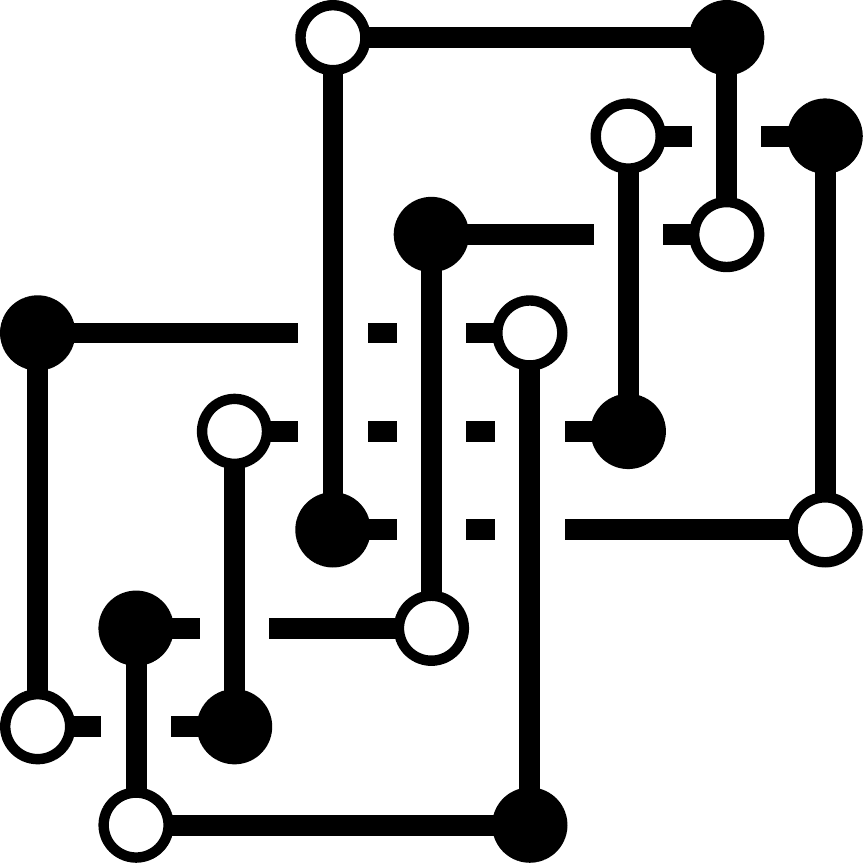}
\put(-35,-15){$10_{136}^5$}
\caption{A symmetric diagram having topological type $10_{136}$}
\label{10_136-symmetric}
\end{figure}

\begin{lemm}
\label{10_136-symmetry}
Let $10_{136}^5$ be the diagram in Figure~\ref{10_136-symmetric}. Then 
$\mathrm{Sym}_-(10_{136}^5) = \mathrm{Sym}(10_{136}^5)$.
\end{lemm}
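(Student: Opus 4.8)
The plan is to follow exactly the template established in the proof of Lemma~\ref{9_48-symmetry}, since the topological type $10_{136}$ again comes with a nontrivial cyclic symmetry group that I must realize entirely by type~II moves. First I would fix the symmetric diagram $10_{136}^5$ of Figure~\ref{10_136-symmetric} and identify a composition $\eta$ of type~II stabilizations and destabilizations, together with a combinatorial equivalence sending the dashed symmetry lines to $\theta=0$ and $\varphi=0$, so that by Lemma~\ref{combinatorial-morphism} we have $\eta\in\mathrm{Sym}_-(10_{136}^5)$. The diagram being visibly symmetric under a rotation, the combinatorial equivalence realizing the relevant reflection/rotation of $\mathbb T^2$ should be read directly off the picture, just as in the two earlier lemmas.

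Next I would look up the symmetry group of the knot $10_{136}$ in the literature (its symmetry type is known and the group $\mathrm{Sym}(10_{136})$ is cyclic of small order, analogous to the $\mathbb Z_4$ and $\mathbb Z_6$ appearing above). It then suffices to prove that $\eta$ is a generator, i.e.\ that the appropriate proper powers $\eta^k$ are nontrivial for each prime divisor of the order. To do this I would represent $\eta$ by a base-point-preserving homeomorphism $T$, write down the Wirtinger presentation of $G(10_{136})=\pi_1(\mathbb S^3\setminus\widehat{10_{136}^5})$ in a convenient set of generators supported on the diagram, and compute the induced automorphism $T_\ast$ on $\pi_1$ exactly as in~\eqref{Action9_48_1}. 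The nontriviality of each power reduces to checking that $T_\ast^k$ is \emph{not} an inner automorphism. As in the $9_{48}$ argument, the two clean tools for this are: (i) a linking-number obstruction---if $T_\ast^k$ were conjugation by $z$ then raising to a suitable power forces a contradiction between the divisibility of $\mathrm{lk}(z^m,\widehat{10_{136}^5})$ and the actual linking number of the conjugating word one computes; and (ii) the triviality of the center of a nontrivial knot group, which lets me pin down candidate conjugators up to torsion and then discard them using that knot groups are torsion-free (Hempl) and Dehn's Lemma to rule out the knot being trivial.

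The main obstacle I expect is purely computational bookkeeping rather than conceptual: correctly tracking how $T$ acts on the Wirtinger generators through the stabilization/destabilization and the two combinatorial equivalences (the horizontal-level-fixing and vertical-level-fixing ones), and then iterating $T_\ast$ to get $T_\ast^2$ and $T_\ast^3$ in closed form. Because $10_{136}$ has more crossings than $9_{48}$, the relation set~\eqref{Wirtinger9_48} will be longer and the simplifications needed to recognize a power as conjugation-by-a-word will require more substitutions; choosing a small generating subset analogous to Remark~\ref{gener} will be essential to keep the final verification short. As in Lemma~\ref{9_48-symmetry}, once I have $T_\ast^k$ in hand the decisive step is the linking-number computation, so I would design the diagram and the generators so that the conjugating element arising from the full power $\eta^{|\mathrm{Sym}|}$ has a linking number whose prime factorization immediately obstructs its being a proper power. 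If that arithmetic obstruction does not settle a particular intermediate power directly, I would fall back on the auxiliary automorphism trick used above (defining an $S$ as a conjugate of $T_\ast^k$, checking $S^2=\mathrm{id}$ on the generators, and then deriving a contradiction from the collapse of the Wirtinger relations) to conclude that $\eta$ has the full order and hence $\mathrm{Sym}_-(10_{136}^5)=\mathrm{Sym}(10_{136}^5)$.
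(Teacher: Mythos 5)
Your plan is workable in principle but it takes a genuinely different---and much heavier---route than the paper, and it misses the two observations that make this particular case almost free. First, the symmetry group of $10_{136}$ is only $\mathbb Z_2$ (Boileau--Zimmermann for nonelliptic Montesinos knots), so there is no need to certify that a candidate generator has ``full order'': one only needs to exhibit a \emph{single} nontrivial element of $\mathrm{Sym}_-(10_{136}^5)$. Second, the diagram $10_{136}^5$ of Figure~\ref{10_136-symmetric} is chosen to be invariant under the reflection $r_{\scriptscriptstyle\diagdown}$, so $\widehat r_{\scriptscriptstyle\diagdown}$ is already a finite-order self-homeomorphism of the pair $(\mathbb S^3,\widehat{10_{136}^5})$. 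The general Lemma~\ref{symmetry-morphism} then puts $[\widehat r_{\scriptscriptstyle\diagdown}]_{\widehat{10_{136}^5}}$ into $\mathrm{Sym}_-(10_{136}^5)$ with no ad hoc move sequence, and its nontriviality comes for free from Lemma~\ref{trivial-symmetry} (a finite-order, nonidentity symmetry isotopic to the identity rel the knot would force a nontrivial center, impossible since $10_{136}$ is not a torus knot and is nontrivial). No Wirtinger presentation, no linking-number obstruction, and no auxiliary automorphism $S$ are needed.

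By contrast, your template from Lemma~\ref{9_48-symmetry} has a concrete unfilled hole: you never specify the composition $\eta$ of type~II moves, and unlike the $7_4$ and $9_{48}$ cases there is no flype figure for $10_{136}$ to read it off from---producing one is real work, and it is exactly the work that the choice of an $r_{\scriptscriptstyle\diagdown}$-symmetric diagram is designed to avoid. Moreover, an $\eta$ built from an explicit move sequence is not represented by a finite-order homeomorphism, so Lemma~\ref{trivial-symmetry} would not apply directly and you would indeed be forced into the full $\pi_1$ computation you describe. That computation would presumably succeed (as it does for $9_{48}$), so I would not call your approach wrong, but you should recognize when the order of the symmetry group and the symmetry of the diagram let you replace all of it with Lemmas~\ref{symmetry-morphism} and~\ref{trivial-symmetry}.
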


\begin{proof}
It is shown in \cite{boizi} that the knot $10_{136}$ is nonelliptic Montesinos knot and its symmetry group is~$\mathbb Z_2$.
Since the homeomorphism $\widehat r_{\scriptscriptstyle\diagdown}$ is of finite order and the knot $10_{136}$ is not a torus knot, $[\widehat r_{\scriptscriptstyle\diagdown}]_{\widehat{10_{136}^5}}$ is the unique nontrivial element of the symmetry group by Lemma~\ref{trivial-symmetry}.

By Lemma~\ref{symmetry-morphism}, $[\widehat r_{\scriptscriptstyle\diagdown}]_{\widehat{10_{136}^5}}\in\mathrm{Sym}_-(10_{136}^5)$, hence $\mathrm{Sym}_-(10_{136}^5) = \mathrm{Sym}(10_{136}^5)$. The proof of Lemma~\ref{symmetry-morphism} is constructive and the reader can easily extract from the proof a sequence of elementary moves representing the morphism.

\end{proof}

\begin{rema}
Although in our sense all elements of the symmetry group are assumed to preserve the orientation of the sphere and of each component of the link, we just note that the rotation by $180^{\circ}$ of the diagrams $7_4^1,$ $9_{48}^5$ and $10_{136}^5$ provides a homeomorphism that preserves the orientation of the sphere and reverses the orientation of the knot.
\end{rema}

\section{Distinguishing}
\label{distinguishing}

\begin{lemm}
\label{7_4-xi-minus}
$\mathscr L_-(7_4^1) = \mathscr L_-(7_4^2)$.
\end{lemm}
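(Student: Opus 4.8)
The plan is to reduce the claim to the combinatorial criterion recalled in the introduction. By \cite{ngth,OST}, the equality $\mathscr L_-(7_4^1)=\mathscr L_-(7_4^2)$ is equivalent to the existence of a finite sequence of exchange moves together with stabilizations and destabilizations of type~II transforming $7_4^1$ into a diagram combinatorially equivalent to $7_4^2$. So the entire content of the lemma is to produce such a sequence, and I would present it as an explicit chain of elementary moves, each verified to be either an exchange move or a type~II (de)stabilization.

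To locate the chain I would first determine how $7_4^2$ is positioned relative to $7_4^1$, reading this off the two diagrams of the $7_4$ figure. The natural expectation --- since $7_4^1$ and $7_4^2$ are the two maximal Thurston--Bennequin Legendrian representatives of the same topological type, and since the two types of (de)stabilization are distinguished only by which diagonal of the move rectangle carries the occupied corners --- is that $7_4^2$ is combinatorially equivalent, up to a few exchange moves, to the reflection $r_{\scriptscriptstyle\diagdown}(7_4^1)$ of $7_4^1$ about the anti-diagonal. The map $r_{\scriptscriptstyle\diagdown}$ preserves each diagonal direction, hence is compatible with the type~II structure, so the associated homeomorphism $\widehat r_{\scriptscriptstyle\diagdown}$ ought to induce a morphism $\widehat{7_4^1}\to\widehat{r_{\scriptscriptstyle\diagdown}(7_4^1)}=\widehat{7_4^2}$ expressible purely through type~II moves.

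The decisive step is therefore to decompose the morphism $[\widehat r_{\scriptscriptstyle\diagdown}]_{\widehat{7_4^1}}$ into a composition of morphisms associated with exchange moves and type~II (de)stabilizations. This is exactly the constructive mechanism underlying Lemma~\ref{symmetry-morphism}, where the reflection morphism of a symmetric diagram is written as a product of type~II moves; here I would run the same construction applied to the non-symmetric diagram $7_4^1$, so that the output is $r_{\scriptscriptstyle\diagdown}(7_4^1)\cong 7_4^2$ rather than $7_4^1$ itself. Splicing this type~II sequence together with the exchange moves identifying $r_{\scriptscriptstyle\diagdown}(7_4^1)$ with $7_4^2$ furnishes the required chain and hence the equality $\mathscr L_-(7_4^1)=\mathscr L_-(7_4^2)$; that the two displayed diagrams can be arranged so that such a chain exists is guaranteed in general by \cite{bypasses}, and the argument above is meant to exhibit it concretely.

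The step I expect to be the main obstacle is this decomposition: turning the global reflection $\widehat r_{\scriptscriptstyle\diagdown}$ into an honest finite list of local moves while tracking the colors of the vertices and the numbering of the component, since $\widehat r_{\scriptscriptstyle\diagdown}$ contains the level inversion $\tau\mapsto 1-\tau$ that interchanges black and white vertices. The delicate point is to confirm, move by move, that every (de)stabilization in the list really is of type~II --- that the occupied corners of each move rectangle always lie on the diagonal permitted for type~II moves --- since a single move of type~I would break the argument. I would verify this directly on the intermediate diagrams, supported by a computer enumeration of their exchange classes to rule out any oversight.
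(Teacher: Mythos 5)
Your route is genuinely different from the paper's, and as written it has a gap at its central step. The paper does not construct any move sequence inside the proof: it observes that $7_4^1$ and $7_4^2$ have the same Thurston--Bennequin and rotation numbers as $\xi_-$-Legendrian knots, and that by the (non-conjectural part of the) classification in \cite{chong2013} there is exactly one $\xi_-$-Legendrian class in this topological type with that pair of invariants; explicit chains of exchange moves and type~II (de)stabilizations are only supplied as a certificate in the ancillary files \cite{anc}. Your reduction of the lemma to exhibiting such a chain is the correct equivalence (it is the criterion from \cite{ngth,OST} recalled in the introduction), but everything then hangs on the unproved assertion that $7_4^2$ coincides, up to exchange moves and combinatorial equivalence, with $r_{\scriptscriptstyle\diagdown}(7_4^1)$. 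You present this as an expectation read off the figures rather than a verified fact, and nothing in your argument substitutes for that verification; if the identification fails, your construction yields a $\xi_-$-equivalence between $7_4^1$ and some diagram other than $7_4^2$ and proves nothing. (If the identification does hold, the rest is fine and requires no new work: Lemma~\ref{symmetry-morphism} is stated for an arbitrary diagram $R$, so it already decomposes $[\widehat r_{\scriptscriptstyle\diagdown}]_{\widehat{7_4^1}}$ into exchange moves and type~II (de)stabilizations, and your worry about a stray type~I move is settled by that lemma rather than by a move-by-move check.)

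A second, smaller point: your appeal to \cite{bypasses} does not close this gap. That result guarantees that one may \emph{replace} the two diagrams by $\xi_+$-equivalent ones which are $\xi_-$-equivalent; it does not assert that the specific diagrams $7_4^1$ and $7_4^2$ are themselves $\xi_-$-equivalent, which is what the lemma claims and what the proof of Proposition~\ref{main-result-7_4} uses, since the exchange-class computation there is performed on these particular diagrams. To finish in the spirit of your approach you must actually verify the combinatorial identification with $r_{\scriptscriptstyle\diagdown}(7_4^1)$; alternatively, the identification can be bypassed entirely, as the paper does, by computing the $\xi_-$ classical invariants of both diagrams and citing the uniqueness of the $\xi_-$-Legendrian class realizing them.
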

\begin{proof}
All rectangular diagrams in the statement of the lemma are presented in the atlas~\cite{chong2013}. So it is known that they belong to the same topological type.

All $\xi_-$-Legendrian types in the statement of this lemma have the same pair of Thurston-Bennequin and rotation numbers. In this topological type it is known (see~\cite{chong2013}) that there exists only one $\xi_-$-Legendrian type having this pair of classical invariants. Hence the lemma.

We also provide ancillary files~\cite{anc} containing sequences of exchange moves and (de)stabilizations of type~II connecting the rectangular diagrams in the statement of this lemma.
\end{proof}

The following two lemmas have exactly the same proof as the previous one.

\begin{lemm}
\label{9_48-xi-minus}
$\mathscr L_-(9_{48}^1) = \mathscr L_-(9_{48}^3) = \mathscr L_-(9_{48}^4) = \mathscr L_-(9_{48}^5).$
\end{lemm}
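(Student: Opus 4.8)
The plan is to reproduce, essentially verbatim, the argument of Lemma~\ref{7_4-xi-minus}, as the text announces: the only ingredients are a bookkeeping computation of classical invariants together with the completeness of the atlas classification in the $\xi_-$ direction. First I would locate each of the four diagrams $9_{48}^1, 9_{48}^3, 9_{48}^4, 9_{48}^5$ in the atlas~\cite{chong2013} and confirm that all four represent one and the same topological type $9_{48}$; this is read off directly from the tables, and for $9_{48}^5$ it also follows from the construction in Lemma~\ref{9_48-symmetry}.

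Next I would compute, straight from each rectangular diagram, the pair of $\xi_-$-classical invariants of $\widehat R$: the Thurston--Bennequin number $\tb$ and the rotation number. The combinatorial formulas for these quantities in terms of the vertices and their colors make this a routine count, and the expected outcome is that all four diagrams carry the same pair $(\tb,\mathrm{rot})$, namely the one realizing the maximal Thurston--Bennequin number in this topological type. Having matched the invariants, I would invoke the classification recorded in~\cite{chong2013}: in topological type $9_{48}$ there is exactly one $\xi_-$-Legendrian type with this pair of classical invariants. This uniqueness is available precisely because the conjecture of~\cite{chong2013} is already settled on the $\xi_-$ side in this type, while only the $\xi_+$ side remains open, which is the point of the present paper. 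Uniqueness then forces the chain of equalities $\mathscr L_-(9_{48}^1) = \mathscr L_-(9_{48}^3) = \mathscr L_-(9_{48}^4) = \mathscr L_-(9_{48}^5)$, which is the assertion of the lemma.

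The one genuinely computational point, and the place where a slip could hide, is providing explicit witnesses for these equalities rather than relying solely on the abstract uniqueness statement. I would therefore supply ancillary files~\cite{anc} giving, for each consecutive pair, a concrete finite sequence of exchange moves and type~II (de)stabilizations carrying one diagram to the other. Generating and verifying these sequences (for instance by a computer enumeration of the relevant exchange classes and a search for connecting moves) is the main obstacle; but, given the small complexity of the diagrams involved, it is a bounded and mechanical task, exactly as in the proof of Lemma~\ref{7_4-xi-minus}.
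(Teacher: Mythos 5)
Your proposal is correct and follows essentially the same route as the paper: the paper proves Lemma~\ref{9_48-xi-minus} by declaring it to have ``exactly the same proof'' as Lemma~\ref{7_4-xi-minus}, namely membership in the atlas of~\cite{chong2013}, agreement of the Thurston--Bennequin and rotation numbers, uniqueness of the $\xi_-$-Legendrian type with those classical invariants in this topological type, and ancillary files~\cite{anc} with explicit connecting sequences. Your extra remark handling $9_{48}^5$ via Lemma~\ref{9_48-symmetry} is a sensible touch but does not change the argument.
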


\begin{lemm}
\label{10_136-xi-minus}
$\mathscr L_-(10_{136}^1) = \mathscr L_-(10_{136}^2) = \mathscr L_-(10_{136}^3) = \mathscr L_-(10_{136}^4) = \mathscr L_-(10_{136}^5).$
\end{lemm}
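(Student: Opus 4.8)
The final statement is Lemma~\ref{10_136-xi-minus}, which asserts the equality of five $\xi_-$-Legendrian types in the topological class $10_{136}$. The excerpt explicitly tells us how to proceed: immediately before this lemma we read that ``the following two lemmas have exactly the same proof as the previous one,'' referring to Lemma~\ref{7_4-xi-minus}. So the plan is to replicate the three-part argument used there, adapting it to the five diagrams $10_{136}^1,\dots,10_{136}^5$.

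\medskip

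First I would establish that all five diagrams represent the same topological type. For the four diagrams $10_{136}^1,\dots,10_{136}^4$ appearing in Proposition~\ref{main-result-10_136} this is recorded in the atlas~\cite{chong2013}; for $10_{136}^5$ it follows from Lemma~\ref{10_136-symmetry}, where this diagram was introduced and shown to carry topological type $10_{136}$. Thus all five live in the same knot type. Second, I would compute the pair of classical invariants---the Thurston--Bennequin number $\tb$ and the rotation number---of each diagram as a $\xi_-$-Legendrian knot, and verify that all five share the same pair. Since the diagrams are explicit, this is a direct combinatorial computation from the rectangular presentations.

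\medskip

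The crux, as in Lemma~\ref{7_4-xi-minus}, is the appeal to the classification in~\cite{chong2013}: in the topological type $10_{136}$ it is known that the pair $(\tb,\mathrm{rot})$ in question is realized by exactly one $\xi_-$-Legendrian type. Combining this uniqueness with the coincidence of invariants established above forces
\[
\mathscr L_-(10_{136}^1) = \mathscr L_-(10_{136}^2) = \mathscr L_-(10_{136}^3) = \mathscr L_-(10_{136}^4) = \mathscr L_-(10_{136}^5),
\]
which is the assertion of the lemma. As a final confirmation, mirroring the end of the earlier proof, I would supply ancillary files~\cite{anc} exhibiting explicit sequences of exchange moves and type~II (de)stabilizations connecting the five rectangular diagrams, thereby realizing the $\xi_-$-Legendrian equivalences concretely rather than merely abstractly.

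\medskip

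The main obstacle I anticipate is not in the logical structure, which is entirely parallel to the $7_4$ case, but in the bookkeeping: one must correctly read off the classical invariants from five distinct rectangular diagrams and be certain that the relevant uniqueness statement in~\cite{chong2013} genuinely applies to the specific $(\tb,\mathrm{rot})$ pair shared by these diagrams. The uniqueness of a $\xi_-$-Legendrian type with a prescribed pair of classical invariants is a nontrivial input borrowed from the atlas, and the whole argument hinges on it; verifying that these five diagrams fall into a single such class is where care is required. The explicit move sequences in the ancillary files serve precisely to remove any residual doubt about this step.
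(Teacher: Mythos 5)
Your proposal matches the paper's argument: the paper states that this lemma ``has exactly the same proof'' as Lemma~\ref{7_4-xi-minus}, namely checking that all diagrams share the topological type and the pair of classical invariants, invoking the uniqueness of the $\xi_-$-Legendrian type with that pair from~\cite{chong2013}, and backing this up with explicit move sequences in the ancillary files~\cite{anc}. Your extra care about $10_{136}^5$ (which is not in the atlas but is introduced in Figure~\ref{10_136-symmetric}) is a sensible refinement of the same argument, not a different route.
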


\begin{proof}[Proof of Proposition~\ref{main-result-7_4}]
No exchange move can be applied to the diagram $7_4^1.$ Also one can check that the diagrams $7_4^1$ and $7_4^2$ are combinatorially nonequivalent.

By Lemma~\ref{7_4-xi-minus}, $\mathscr L_-(7_4^1) = \mathscr L_-(7_4^2)$. Since $[7_4^1] \neq [7_4^2]$, by Lemma~\ref{7_4-symmetry} and Theorem~\ref{exchange-count} we have $\mathscr L_+(7_4^1) \neq \mathscr L_+(7_4^2).$
\end{proof}

The following lemma is a straight-forward consequence of the definitions.

\begin{lemm}\label{symmetry-conjugacy}
If $\eta$ is a morphism from $\widehat R_1$ to $\widehat R_2$ which can be decomposed into morphisms associated with exchange moves and type~II (de)stabilizations then the map $\psi \mapsto \eta\psi\eta^{-1}$ induces an isomorphism of pairs $(\mathrm{Sym}(R_1), \mathrm{Sym}_-(R_1))$ and $(\mathrm{Sym}(R_2), \mathrm{Sym}_-(R_2)).$
\end{lemm}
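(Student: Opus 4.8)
The plan is to unpack the definitions and check that conjugation by $\eta$ does what we want. First I would observe that $\eta$ is an isomorphism in the groupoid of links and morphisms: it is a connected component of the space of homeomorphisms $(\mathbb S^3, \widehat R_1) \to (\mathbb S^3, \widehat R_2)$ preserving all the required orientations and numberings, and as such it has a two-sided inverse $\eta^{-1}$ going from $\widehat R_2$ to $\widehat R_1$. Consequently, for any $\psi \in \mathrm{Sym}(R_1)$ (a self-morphism of $\widehat R_1$) the composite $\eta\psi\eta^{-1}$ is a self-morphism of $\widehat R_2$, so the map $\psi \mapsto \eta\psi\eta^{-1}$ is a well-defined homomorphism $\mathrm{Sym}(R_1) \to \mathrm{Sym}(R_2)$. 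Its inverse is $\chi \mapsto \eta^{-1}\chi\eta$, so it is a group isomorphism; this is the standard fact that conjugation by an isomorphism identifies the automorphism groups of isomorphic objects in any category.

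The substantive point is that this isomorphism restricts to an isomorphism of the distinguished subgroups $\mathrm{Sym}_-(R_1) \to \mathrm{Sym}_-(R_2)$, and for this I would use the hypothesis that $\eta$ is itself decomposable into morphisms associated with exchange moves and type~II (de)stabilizations. Recall from the introduction that $\mathrm{Sym}_-(R_i)$ is by definition the subgroup of $\mathrm{Sym}(R_i)$ consisting of those morphisms representable as a composition of morphisms associated with exchange moves and type~II (de)stabilizations. If $\psi \in \mathrm{Sym}_-(R_1)$, write it as such a composition; since $\eta$ and $\eta^{-1}$ are also compositions of the same elementary kinds (the inverse of a type~II stabilization being a type~II destabilization, and the inverse of an exchange move being an exchange move), the composite $\eta\psi\eta^{-1}$ is again a composition of morphisms associated with exchange moves and type~II (de)stabilizations. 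Hence $\eta\psi\eta^{-1} \in \mathrm{Sym}_-(R_2)$. The symmetric argument with $\eta^{-1}$ in place of $\eta$ shows the inverse map sends $\mathrm{Sym}_-(R_2)$ into $\mathrm{Sym}_-(R_1)$, so the restriction is a bijection and therefore an isomorphism of subgroups.

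Putting these together gives the claimed isomorphism of pairs. I expect the only genuine care needed is bookkeeping around inverses: one must confirm that the class of morphisms generated by exchange moves and type~II (de)stabilizations is closed under taking inverses, which is immediate since each generator has an inverse of the same type. No deeper topology is required here beyond the categorical formalism already set up, so this is the routine consequence of the definitions that the surrounding text advertises.
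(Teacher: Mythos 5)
Your proposal is correct and is precisely the ``straight-forward consequence of the definitions'' that the paper asserts without writing out: conjugation by the invertible morphism $\eta$ identifies the symmetry groups, and since $\eta$, $\eta^{-1}$ and any $\psi\in\mathrm{Sym}_-(R_1)$ are all compositions of morphisms associated with exchange moves and type~II (de)stabilizations --- a class closed under inverses, as each generator's inverse is the morphism of the reverse move of the same type --- the conjugate lands in $\mathrm{Sym}_-(R_2)$. Nothing further is needed.
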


\begin{proof}[Proof of Proposition~\ref{main-result-10_136}]
By Lemmas~\ref{10_136-symmetry},~\ref{10_136-xi-minus}~and~\ref{symmetry-conjugacy}, $\mathrm{Sym}_-(10_{136}^1)= \mathrm{Sym}(10_{136}^1)$ and $\mathrm{Sym}_-(10_{136}^3)= \mathrm{Sym}(10_{136}^3).$


It is easy to check that the exchange class $[10_{136}^1]$ consists of one element and the exchange class $[10_{136}^3]$ consists of three elements. And also that $10_{136}^2\notin[10_{136}^1]$ and $10_{136}^4\notin[10_{136}^3].$

By Lemma~\ref{10_136-xi-minus}, $\mathscr L_-(10_{136}^1) = \mathscr L_-(10_{136}^2).$  So by Theorem~\ref{exchange-count}, the conditions $\mathrm{Sym}_-(10_{136}^1)= \mathrm{Sym}(10_{136}^1)$, $\mathscr L_-(10_{136}^1) = \mathscr L_-(10_{136}^2)$ and $10_{136}^2\notin[10_{136}^1]$ imply $\mathscr L_+(10_{136}^1) \neq \mathscr L_+(10_{136}^2).$ The same proof holds for the inequality $\mathscr L_+(10_{136}^3) \neq \mathscr L_+(10_{136}^4).$
\end{proof}

\begin{proof}[Proof of Proposition~\ref{main-result-9_48}]
By Lemmas~\ref{9_48-symmetry},~\ref{9_48-xi-minus}~and~\ref{symmetry-conjugacy}, $\mathrm{Sym}_-(9_{48}^1)= \mathrm{Sym}(9_{48}^1)$ and $\mathrm{Sym}_-(9_{48}^3)= \mathrm{Sym}(9_{48}^3)$. 

The exchange class $[9_{48}^3]$ consists of three elements and it is a direct check that $9_{48}^4\notin[9_{48}^3].$ Since $\mathscr L_-(9_{48}^3) = \mathscr L_-(9_{48}^4)$ (by Lemma~\ref{9_48-xi-minus}), $9_{48}^4\notin[9_{48}^3]$ and $\mathrm{Sym}_-(9_{48}^3)= \mathrm{Sym}(9_{48}^3)$, then $\mathscr L_+(9_{48}^3) \neq \mathscr L_+(9_{48}^4)$ by Theorem~\ref{exchange-count}.

Note that the diagram $9_{48}^2$ is obtained from the diagram $9_{48}^1$ by reversing the orientation and their $\xi_-$-Legendrian types have opposite nonzero rotation numbers. So $\mathscr L_-(9_{48}^1)\neq \mathscr L_-(9_{48}^2)$ and we must argue in the different way to distinguish Legendrian types $\mathscr L_+(9_{48}^1)$ and $\mathscr L_+(9_{48}^2)$.

Denote the diagram on the right of Figure~\ref{9_48-1to2} by $9_{48}^6$. We will show that $\mathscr L_-(9_{48}^1) = \mathscr L_-(9_{48}^6)$, $\mathscr L_+(9_{48}^6) = \mathscr L_+(9_{48}^2)$ and $\mathscr L_+(9_{48}^1) \neq \mathscr L_+(9_{48}^6)$. Hence $\mathscr L_+(9_{48}^1) \neq \mathscr L_+(9_{48}^2).$

\begin{figure}
\includegraphics[scale=.2]{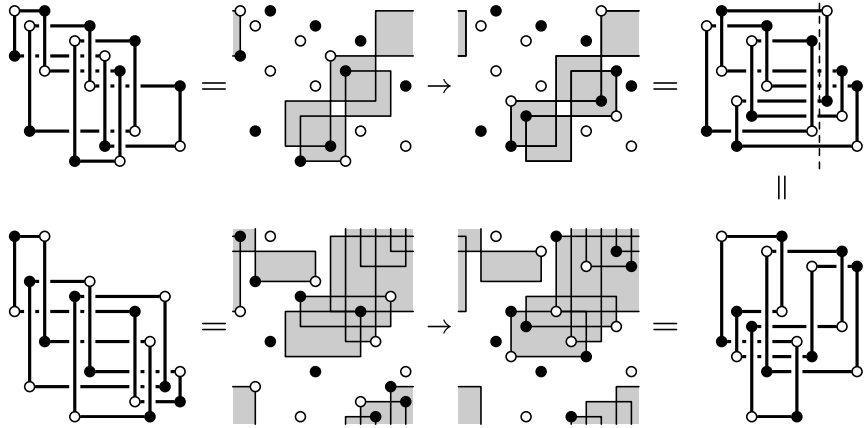}
\caption{A transitions from $9_{48}^1$ ($9_{48}^2$) to $9_{48}^6$ by moves not including stabilizations and destabilizations of type~I (type~II)}
\label{9_48-1to2}
\end{figure}

The top row of Figure~\ref{9_48-1to2} illustrates a sequence of exchange moves and (de)stabilizations of type~II from the diagram $9_{48}^1$ to the diagram $9_{48}^6$. Each elementary move is associated with some rectangle in this figure. We apply these moves in such an order that at each step the corresponding rectangle provides an applicable elementary move. Although there are more than one such orderings and in different orderings with the same rectangle different elementary moves can be associated, in each case we obtain a sequence of elementary moves from the diagram $9_{48}^1$ to the diagram $9_{48}^6$. See Figure~\ref{9_48-1to6} for an example of such a sequence for some ordering. For convenience, the vertices that are being eliminated or moving we paint red.

\begin{figure}
\includegraphics[scale=.2]{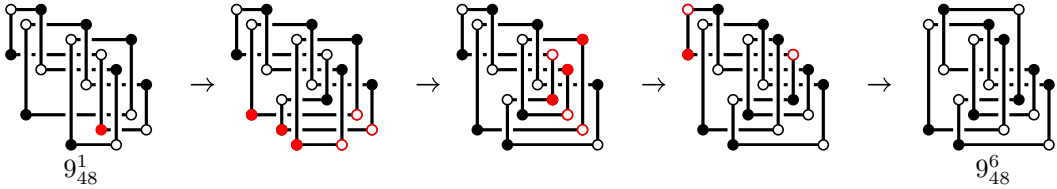}
\put(-375,-10){$9_{48}^1$}
\put(-30,-10){$9_{48}^6$}
\caption{A transition from $9_{48}^1$ to $9_{48}^6$ by a stabilization of type~II, two exchanges of horizontal edges, two exchanges of vertical edges and a destabilization of type~II}
\label{9_48-1to6}
\end{figure}

Similarly the bottom row of Figure~\ref{9_48-1to2} illustrates a sequence of exchange moves and (de)stabilizations of type~I from the diagram $9_{48}^2$ to the diagram $9_{48}^6$. The top diagram in the right column of Figure~\ref{9_48-1to2} is mapped to the bottom diagram in this column by a combinatorial equivalence which maps a dashed line to the vertical line $\theta = 0$.

\begin{figure}[h]
\begin{tabular}{ccc}
\includegraphics[scale=.2]{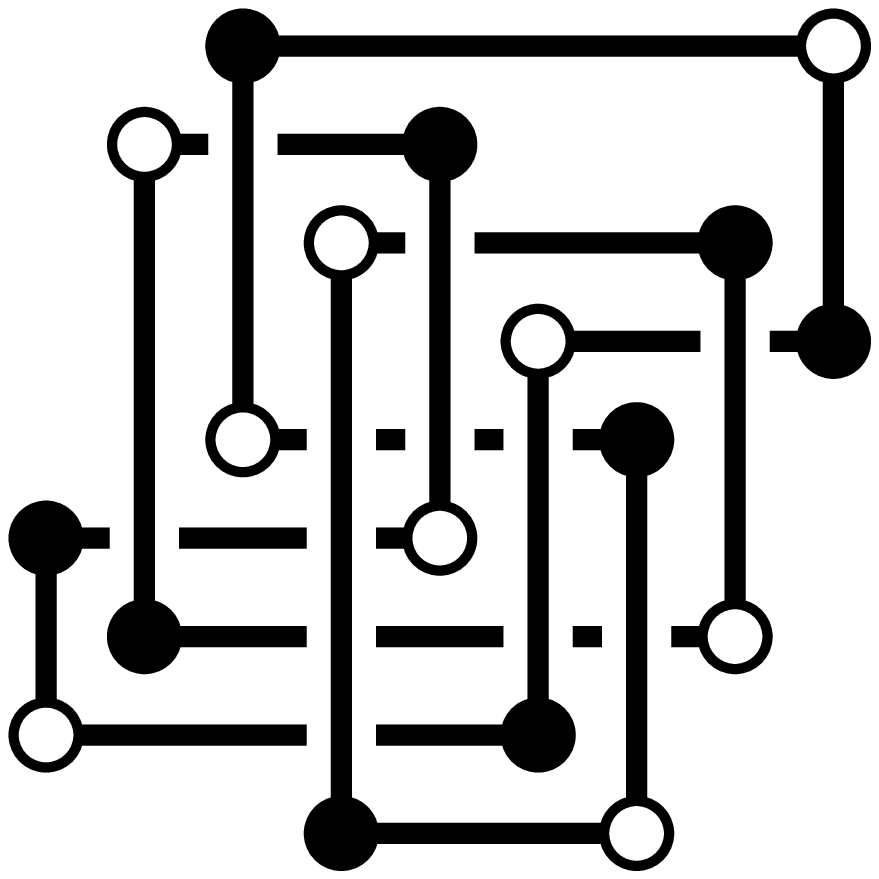}&\includegraphics[scale=.2]{9_48-1.eps}&\\
\includegraphics[scale=.2]{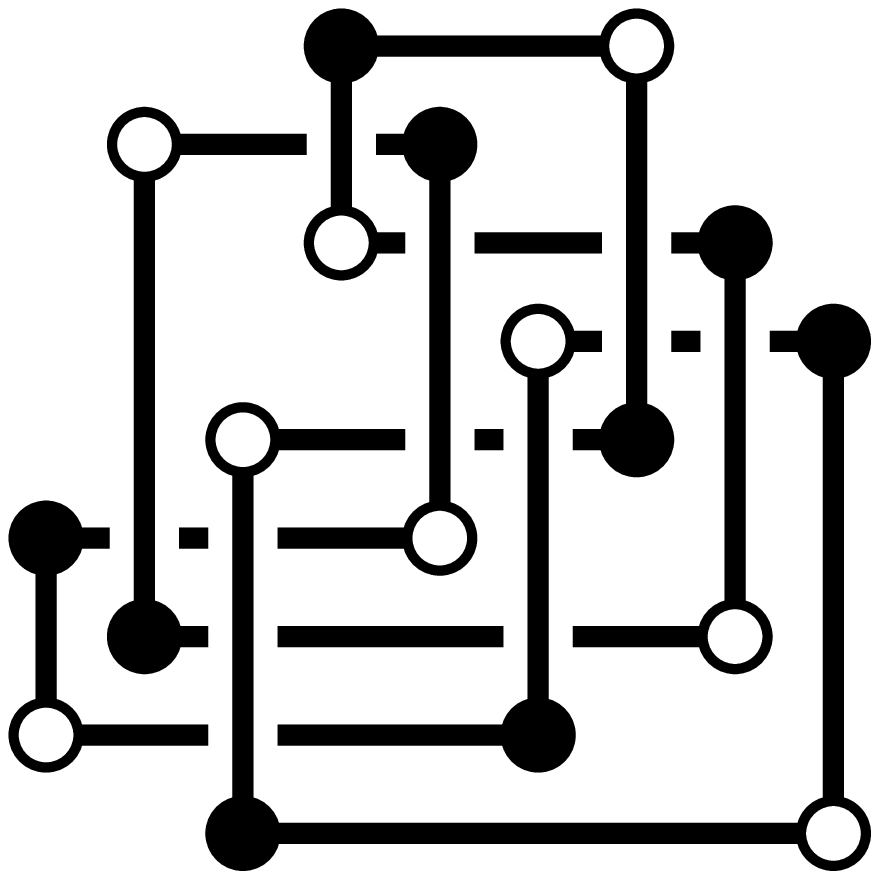}&\includegraphics[scale=.2]{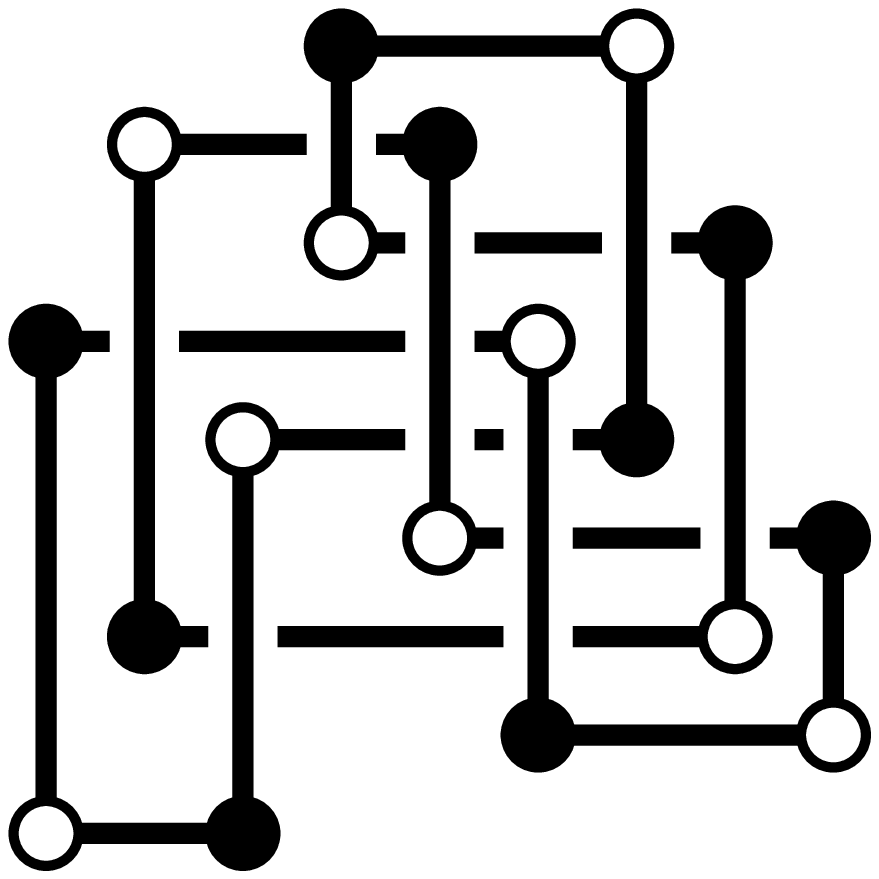} & \includegraphics[scale=.2]{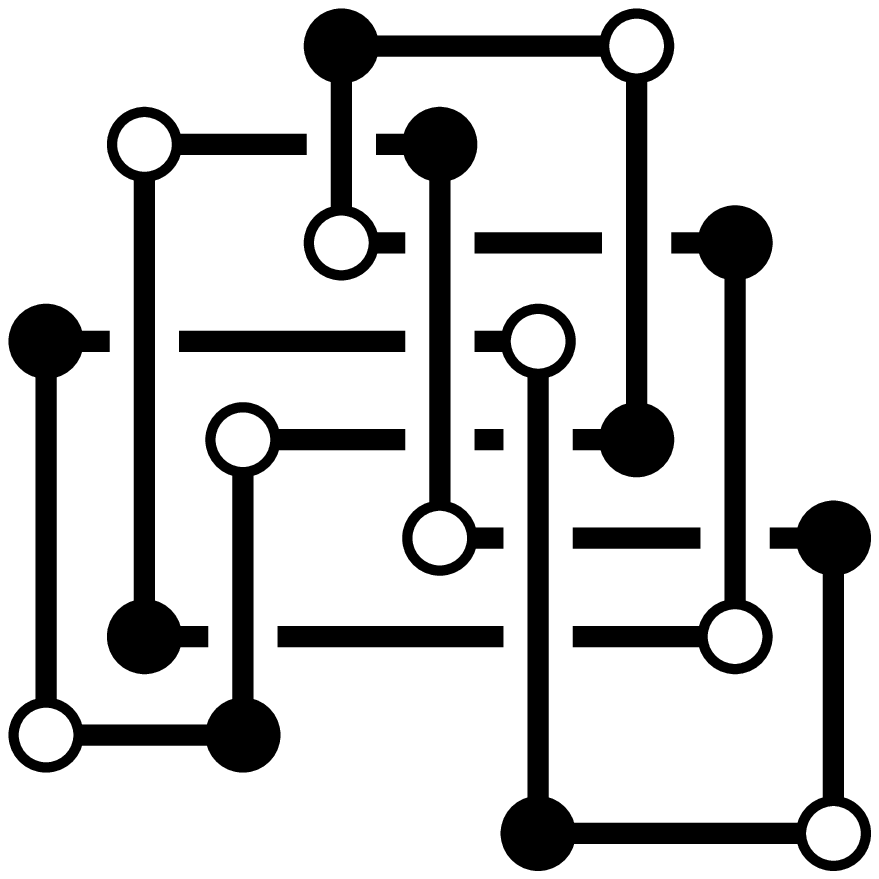} \\
\includegraphics[scale=.2]{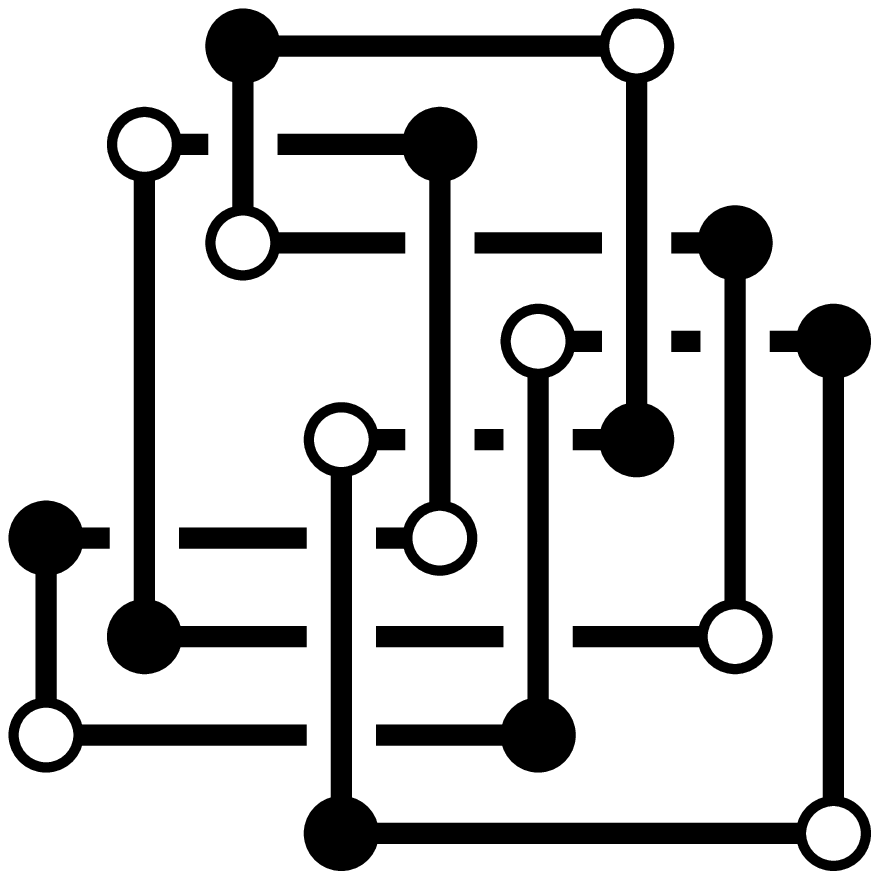}&\includegraphics[scale=.2]{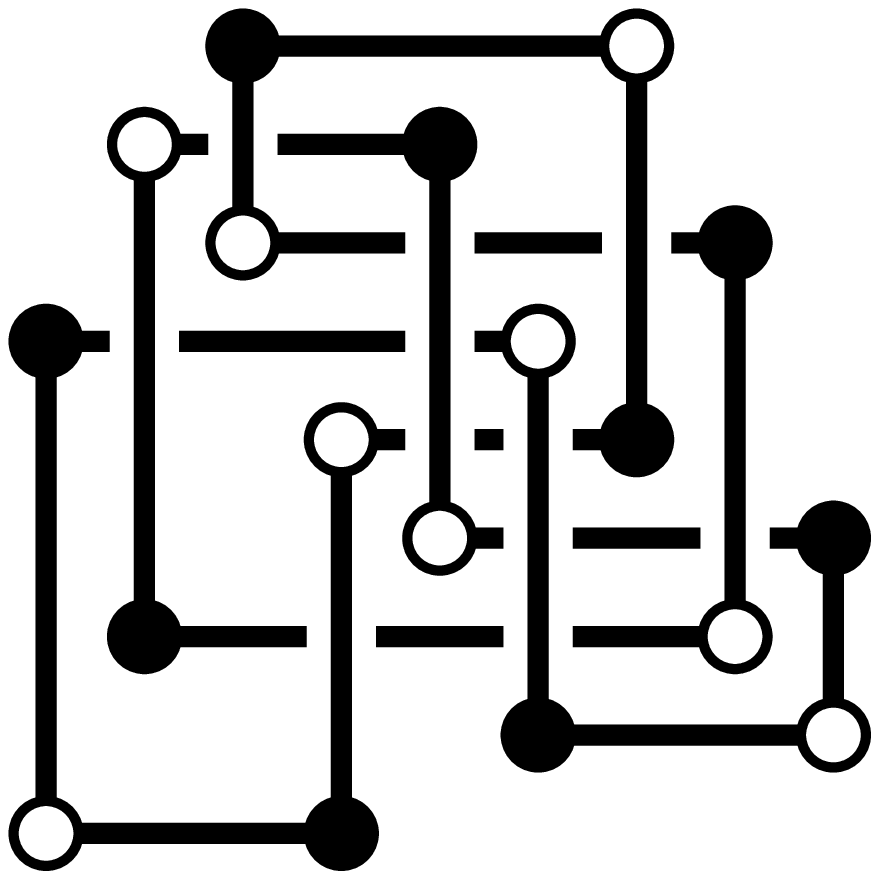} & \includegraphics[scale=.2]{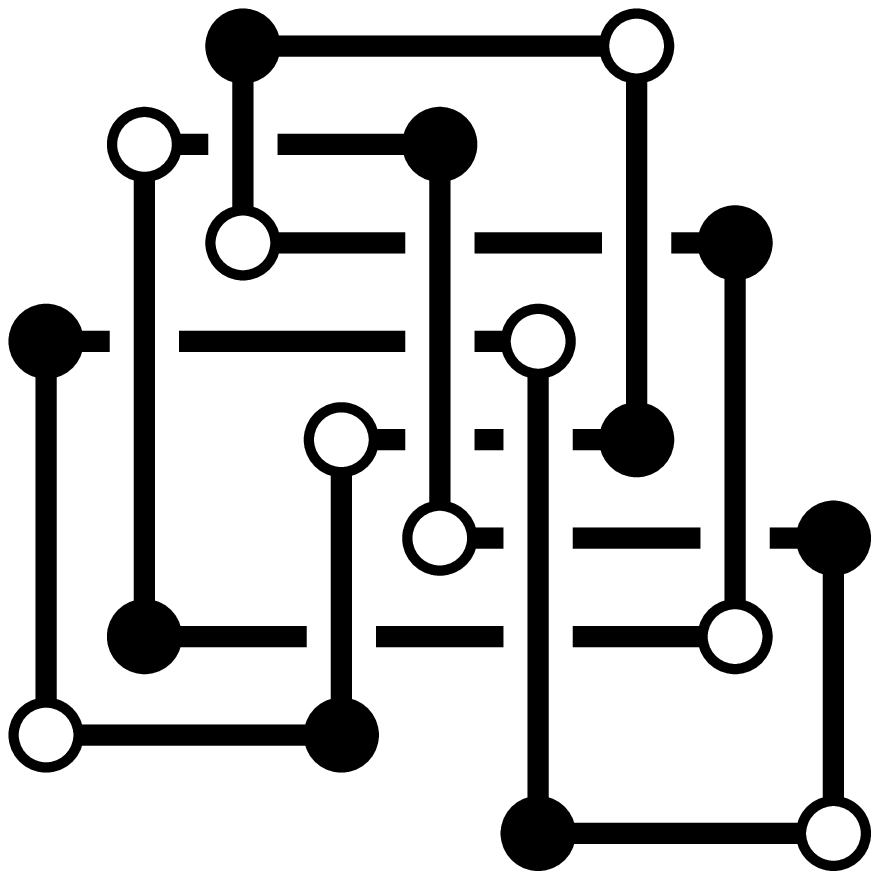}
\end{tabular}
\caption{Exchange class of $9_{48}^1$}
\label{9_48-1-exchange}
\end{figure}

Now let us prove that $\mathscr L_+(9_{48}^1) \neq \mathscr L_+(9_{48}^6).$

The whole exchange class of $9_{48}^1$ is presented in Figure~\ref{9_48-1-exchange}. Two classes of combinatorial equivalence are related by an exchange move if and only if they are neighbors in this table (lie in a common row and adjacent columns or lie in a common column and adjacent rows). For each diagram $R$ in Figure~\ref{9_48-1-exchange} there exists a pair of horizontal levels $\varphi_0$ and $\varphi_1$ such that
\begin{enumerate}
\item there exists a vertical edge of the diagram $R$ with endpoints on the levels $\varphi_0$ and $\varphi_1$;
\item there exists exactly one occupied horizontal level $\varphi_2\in(\varphi_0;\varphi_1)$ of the diagram $R$.
\end{enumerate} 

There is no such pair of horizontal levels for the diagram $9_{48}^6.$ Hence $9_{48}^6\notin[9_{48}^1].$ Finally $[9_{48}^1]\neq[9_{48}^6],$ $\mathscr L_-(9_{48}^1) = \mathscr L_-(9_{48}^6)$ and $\mathrm{Sym}_-(9_{48}^1)= \mathrm{Sym}(9_{48}^1)$ imply $\mathscr L_+(9_{48}^1) \neq \mathscr L_+(9_{48}^6)$ by Theorem~\ref{exchange-count}.

\end{proof}

\section{Lemmas on morphisms}
\begin{lemm}[see \cite{gif67} and "Borel's theorem" in \cite{bosie}]
\label{trivial-symmetry}
Let $L$ be a link in $\mathbb S^3$, $h$ be a self-homeomorphism of a pair $(\mathbb S^3, L)$ and $r$ be a positive integer such that
\begin{enumerate}
\item $h\neq \mathrm{id}_{\mathbb S^3}$;
\item $h^r = \mathrm{id}_{\mathbb S^3}$;
\item $h$ is isotopic to $\mathrm{id}_{\mathbb S^3}$ through self-homeomorphisms of the pair $(\mathbb S^3, L).$
\end{enumerate}
Then the center of $\pi_1(\mathbb S^3\setminus L)$ is nontrivial or $L$ is the trivial link.

The category is assumed to be PL or Diff.
\end{lemm}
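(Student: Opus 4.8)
The plan is to extract a central element of $\pi_1(\mathbb S^3\setminus L)$ directly from the isotopy furnished by hypothesis~(3), and then to show that this element is nontrivial unless $L$ is unknotted. First I would upgrade the given isotopy into a based loop of homeomorphisms. Hypothesis~(3) provides an isotopy $H_t$ with $H_0=\mathrm{id}$, $H_1=h$, each $H_t$ preserving $L$; since $h^r=\mathrm{id}$, splicing the $r$ translates $H_t,\ h\circ H_t,\ \dots,\ h^{r-1}\circ H_t$ yields a loop $G_s$ ($s\in[0,1]$) of homeomorphisms of the pair $(\mathbb S^3,L)$ with $G_0=G_1=\mathrm{id}$. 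Fixing a basepoint $p\in\mathbb S^3\setminus L$, the track $\delta(s)=G_s(p)$ is a loop in the complement, and for any loop $\gamma$ based at $p$ the map $(s,t)\mapsto G_t(\gamma(s))$ is a homotopy whose four sides are $\gamma,\gamma,\delta,\delta$; this shows $[\delta][\gamma]=[\gamma][\delta]$, so $[\delta]\in Z(\pi_1(\mathbb S^3\setminus L))$. This construction is the engine of the proof and uses nothing beyond hypotheses~(2) and~(3).

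Next I would detect $[\delta]$ on the boundary of the link exterior. Because $h$ is isotopic to the identity it preserves each component $L_i$ and restricts there to an orientation-preserving circle homeomorphism of finite order, hence to a rotation by $2\pi k_i/r$. Evaluating the same loop $G$ at a point of the peripheral torus $T_i=\partial N(L_i)$ produces, in meridian--longitude coordinates, the central loop $\lambda_i^{k_i}\mu_i^{m_i}$, the meridional exponent $m_i$ recording the winding of the chosen isotopy. If some $k_i\neq 0$ and $L$ is a nonsplit, nontrivial link, then the peripheral tori are incompressible and the peripheral subgroup injects as $\mathbb Z^2$ (Waldhausen), so $\lambda_i^{k_i}\mu_i^{m_i}\neq 1$ and $Z(\pi_1(\mathbb S^3\setminus L))$ is nontrivial. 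If instead every $k_i=0$, then each $h|_{L_i}$, being a finite-order circle map of rotation number zero, is the identity, so $h$ fixes $L$ pointwise; by the Smith conjecture $\mathrm{Fix}(h)$ is an unknotted circle, whence $L$ is contained in, and hence equals, an unknot and is therefore trivial.

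The main obstacle is the control of the two degenerate situations, and both are exactly where the cited work enters. The passage from $h$ fixing $L$ pointwise to $L$ being unknotted is the positive solution of the Smith conjecture: a priori a finite-order rotation of $\mathbb S^3$ could have a knotted fixed circle, and it is precisely Giffen's and Borel's results that forbid this. Likewise, when the complement is reducible or $L$ has split or trivial components the peripheral injectivity used above can fail, and one must invoke the equivariant sphere and loop theorems to reduce these cases to $L$ being a trivial link; these are collected into the exceptional alternative of the statement. I would organize the final write-up so that the central-element construction is stated once and for all, while the geometric input from the Smith conjecture is quarantined into the single step where $h$ acts trivially on $L$ or the exterior fails to be irreducible.
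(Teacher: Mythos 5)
Your first step---splicing the isotopy into a loop of homeomorphisms and showing that the track $\delta$ of the basepoint is central via the square homotopy $(s,t)\mapsto G_t(\gamma(s))$---is exactly the paper's construction of $\gamma_p$, and it is correct. The gap is in how the alternative ``$L$ is the trivial link'' is supposed to arise. In the paper that alternative comes from precisely the branch you leave unfinished: when the peripheral central element dies in $\pi_1(\mathbb S^3\setminus L)$, the (ordinary, not equivariant) loop theorem/Dehn's Lemma produces an essential compressing curve on $\partial N(L_i)$, a homology computation in the link complement forces that curve to be the longitude, hence $L_i$ bounds a disk disjoint from the other components; iterating over all components gives triviality of $L$. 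Your proposal instead restricts the peripheral-injectivity argument to nonsplit nontrivial links and defers the rest to ``equivariant sphere and loop theorems,'' but no actual reduction is given, those are not the relevant tools, and the one configuration that genuinely produces the trivial-link conclusion (some $k_i\neq 0$, $L_i$ bounding a disk in the complement of the rest, and meridional winding $m_i=0$, so the central element is invisible) is never treated. A smaller point: the longitudinal exponent of the track is only congruent to $k_i$ modulo $r$, not equal to $k_i$; this is harmless when $k_i\neq0$ but should be said.

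The branch where $h$ fixes components pointwise is also handled differently, and your version is both heavier and narrower than needed. The paper shows (via Bredon/Newman) that near a pointwise-fixed component $c$ the map $h$ is a meridional rotation by a \emph{nonzero} angle $2\pi k/r$, so $\gamma_p$ has meridional winding congruent to $k\not\equiv 0 \pmod r$ and is therefore already nontrivial in $H_1(\mathbb S^3\setminus L)$; the center is nontrivial and no deep input is required. Your appeal to the Smith conjecture is a valid substitute only when \emph{every} component is fixed pointwise; in a mixed link, where $h$ rotates some components and fixes others, you fall into your first branch and the fixed components contribute nothing to your argument. Replacing the Smith-conjecture step by this local meridional-winding computation would simultaneously remove the heavy dependency, cover the mixed case component by component, and supply the missing engine for the degenerate cases above.
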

\begin{proof}

Let $h_t$ be an isotopy such that $h_0 = \mathrm{id}_{\mathbb S^3}$, $h_1 = h$ and $h_t(L)\subset L$ for any $t\in[0;1].$ Let $p\in\mathbb S^3\setminus L$ and let $\alpha$ be an arc $h_t(p)$ where $t\in[0;1].$ Let $\gamma_p$ be the loop 
$$
\alpha \circ h\alpha \circ \dots \circ h^{r-1}\alpha.
$$

Let us prove that $\gamma_p$ lies in the center of $\pi_1(\mathbb S^3\setminus L, p)$ for any $p.$

Let $\gamma'(t),$ $t\in[0;1]$ be any loop with $\gamma'(0)=\gamma'(1)=p$. Consider a homotopy between arcs $\gamma'\circ\alpha$ and $\alpha\circ h\gamma'$ fixing endpoints:
$$
\Upsilon_{t,s} = 
\left\{
\begin{aligned}
\alpha(3ts)&, \quad t\in[0;1/3],\\
h_s(\gamma'(3t-1))&, \quad  t\in[1/3;2/3],\\
\alpha(s+(1-s)(3t-2))&, \quad  t\in[2/3;1],
\end{aligned}
\right.
$$
where $s\in[0;1].$

Composing this homotopy with the homeomorphisms $h^k$ for $k = 1,\dots,r-1$ we get
\begin{multline*}
\gamma'\circ\gamma_p = \gamma'\circ \alpha \circ h\alpha \circ \dots \circ h^{r-1}\alpha \sim \alpha\circ h\gamma' \circ h\alpha \circ \dots \circ  h^{r-1}\alpha \sim \dots \\ \dots \sim \alpha \circ h\alpha \circ\dots\circ h^{r-1}\gamma' \circ h^{r-1}\alpha \sim \alpha \circ h\alpha \circ \dots \circ h^{r-1}\alpha \circ \gamma' = \gamma_p\circ\gamma'.
\end{multline*}

Hence $\gamma_p$ lies in the center.

Suppose that $\gamma_p$ is always trivial.

Since $h$ is isotopic to the identity it preserves the orientation of the sphere, and orientations and numbering of connected components of $L$. Consider some connected component $c\subset L$. 

\smallskip
\noindent\emph{Case 1.} $h$ acts nontrivially on $c.$

In this case the action of $h$ on $c$ is conjugate to the rotation by $2\pi k/r$ of the standard circle $\mathbb S^1$ for some $k \in \{1, 2, \dots, r-1\}$. Choose a tubular neighborhood $U\cong\mathbb S^1\times \mathbb D^2$ of $c$ such that if $h(\theta, x) = (\theta', x')$ then $|\theta'-\theta-2\pi k/r|<\pi/r$ for any $\theta\in\mathbb S^1$ and $x\in\mathbb D^2$. Since the isotopy keeps $c$ invariant, we can choose a point $p\in U\setminus c$ such that $\gamma_p\subset U$. By construction, $\gamma_p$ is free homotopic in $U$ to $l$ times the class $[c]$ where $l \equiv k$ $\mathrm{mod}$ $r$.

This means that if $\gamma_p$ is trivial in $\pi_1(\mathbb S^3\setminus L)$ then some curve on $\partial U$ is trivial in the same sense. By Dehn's Lemma, some curve which is not trivial on $\partial U$ bounds a disk in $\mathbb S^3\setminus L.$ This is possible only for the longitude because the other simple closed curves on $\partial U$ are homologically nontrivial in $\mathbb S^3\setminus L$. Thus the component $c$ bounds a disk not intersecting the other components.

\smallskip
\noindent\emph{Case 2.} $h$ acts trivially on $c.$

Suppose that the category is Diff. By~\cite[Theorem 2.2, p.~306]{bre72} there exists a neighborhood $U\cong\mathbb S^1\times\mathbb D^2$ of $c$ such that each circle $q\times \mathbb S^1_{\rho}$ is invariant under $h$ for any $q\in c$ and $\rho\in(0;\varepsilon)$ --- a radius of the circle, and each radius of each disk $q\times \mathbb D^2$ is mapped again to a radius of this disk under $h$. Moreover, we can choose coordinates on $\mathbb S^1\times\mathbb D^2$ such that $h$ acts on each disk by rotation by $2\pi k/r$ for some $k \in \{0, 1, \dots, r-1\}$. If $k=0$ then $h$ is identical everywhere in $\mathbb S^3\setminus L$ since the latter is connected. So $k\neq0.$ As in the previous case take $p\in U\setminus c$ sufficiently close to $c$ such that $\gamma_p\subset U.$ It is clear that $\gamma_p$ is homologous in $U\setminus c$ to $k[\mu]+r(m[\mu]+l[\lambda])$ where $m$ and $l$ are integers, $\mu$ and $\lambda$ are the meridian and the longitude of $\partial U$. Hence $\gamma_p$ is nontrivial.

In the PL case we can assume that $h$ is a simplicial map since $h$ is of finite order. Take a second baricentric subdivision of the corresponding triangulation. Then the union $U$ of simplices intersecting $c$ is a tubular neighborhood invariant under $h$. Then argue as in the Diff case.

\end{proof}

For a pair $(f,g)$ of orientation-preserving self-homeomorphisms of $\mathbb S^1$ we write $(f,g)(R)$ for a rectangular diagram whose vertices are obtained from the set of all vertices of the diagram $R$  by applying the self-homeomorphism $(f,g)$ of $\mathbb T^2$ preserving colors of vertices and numbering of connected components. 

\begin{lemm}
\label{combinatorial-morphism}
For every pair of orientation-preserving homeomorphisms $f,g:\mathbb S^1\to \mathbb S^1$ and any rectangular diagram $R$ of a link the morphism $[\widehat{(f,g)}]_{\widehat{R}}$ can be decomposed into morphisms associated with exchange moves.
\end{lemm}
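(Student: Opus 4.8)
The plan is to reduce an arbitrary combinatorial equivalence to a product of elementary reparametrizations of a single circle, and to realize each of these by exchange moves. First I would use that morphisms compose: writing $\widehat{(f,g)}=\widehat{(f,\mathrm{id})}\circ\widehat{(\mathrm{id},g)}$ reduces the statement to the two factors $(f,\mathrm{id})$ and $(\mathrm{id},g)$ separately. Since the reflection $\widehat r_{\scriptscriptstyle\diagdown}$ interchanges the $\theta$- and $\varphi$-circles and carries exchange moves to exchange moves, conjugation by $[\widehat r_{\scriptscriptstyle\diagdown}]_{\widehat R}$ turns a factor of the form $(\mathrm{id},g)$ into one of the form $(f',\mathrm{id})$. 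Hence it suffices to assume $g=\mathrm{id}$ and to work only with the vertical lines.

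Next I would strip off the part of $f$ that does nothing essential. Any $f\in\mathrm{Homeo}^+(\mathbb S^1)$ factors as $\rho_\alpha\circ f_0$, where $\rho_\alpha$ is a rigid rotation and $f_0$ fixes a basepoint $p_0\in\mathbb S^1$ chosen off the marked vertical lines. The homeomorphism $f_0$ keeps every marked line inside the arc $\mathbb S^1\setminus\{p_0\}$, so the isotopy it generates slides the diagram through combinatorially equal diagrams in which no two columns ever coincide; the corresponding path in the space of column configurations is null-homotopic rel endpoints, so $[\widehat{(f_0,\mathrm{id})}]_{\widehat R}$ is the canonical morphism, an empty composition of exchange moves. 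Thus only the rotation part remains, and after one more reparametrization of the harmless kind above I may take it to be a cyclic rotation sending the $i$-th vertical line to the position of the $(i+1)$-st.

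Finally I would realize this cyclic rotation by exchange moves. Recall that a single exchange move slides one whole column from $\theta_0$ to a new position $\theta_1$, and is admissible precisely when the swept rectangle $[\theta_0;\theta_1]\times[\varphi_0;\varphi_1]$ (with $[\varphi_0;\varphi_1]$ the span of the moving column) contains no other vertex of the diagram; in particular a column may slide past a neighbor, and through the cut $\theta=0$, whenever their $\varphi$-spans are disjoint. The cyclic rotation is the permutation $(1\,2\,\cdots\,m)$, a product of adjacent interchanges, each of which I want to perform by such a column slide; I would then check, using the $\mathbb D^2$-move description and tracking colors and numbering, that the disk furnished by the sliding isotopy lies in the tetrahedron $\Delta_r$ over the relevant rectangle $r$, so that the composite of the exchange-move morphisms equals $[\widehat{(f,\mathrm{id})}]_{\widehat R}$. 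The main obstacle I expect is \emph{admissibility}: an adjacent interchange is blocked when the $\varphi$-spans of the two neighboring columns interleave. I would remove each such obstruction by first applying auxiliary exchange moves in the $\varphi$-direction to make the two spans disjoint or nested, performing the interchange, and then restoring; verifying that this rearrangement can always be arranged and that it leaves the resulting morphism unchanged is the technical heart of the argument.
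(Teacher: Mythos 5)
Your step (3) rests on a misreading of what $(f,\mathrm{id})$ does to the columns, and this is where the argument breaks. An orientation-preserving homeomorphism of the $\theta$-circle preserves the \emph{cyclic} order of the occupied vertical levels, so no two columns ever need to trade places: the ``cyclic rotation sending the $i$-th vertical line to the position of the $(i+1)$-st'' is only a relabelling relative to a cut of the torus (which is irrelevant, since the diagram lives on $\mathbb T^2$), not a product of adjacent interchanges. Trying to realize it as the permutation $(1\,2\,\cdots\,m)$ written in adjacent transpositions is not merely inconvenient but fatal: an interchange of two adjacent columns whose $\varphi$-spans interleave is a crossing change and alters the topological type of $\widehat R$, so no auxiliary exchange moves can make it admissible, and the ``technical heart'' you defer is an impossibility. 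Fortunately it is also unnecessary.

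Moreover, the content you dismiss in step (2) as ``an empty composition of exchange moves'' is in fact the whole proof. The morphism $[\widehat{(f_0,\mathrm{id})}]_{\widehat R}$ has target $\widehat{(f_0,\mathrm{id})(R)}$, which is a different link from $\widehat R$ whenever $f_0$ moves an occupied level, so it cannot be an empty composition; what is true is that it is a composition of exchange moves each of which leaves the combinatorial class unchanged. The paper's proof makes exactly this precise, with no rotation factor needed: write $f$ as a composition of homeomorphisms $f_i$, each identical outside an interval containing at most one occupied vertical level $\theta_i$; for such an $f_i$ the two vertices on the level $\theta_i$ are carried to the level $f_i(\theta_i)$ across a rectangle $[\theta_i;f_i(\theta_i)]\times[\varphi_0;\varphi_1]$ meeting the diagram only in those two vertices, and one checks from the $\mathbb D^2$-move definition that $\widehat{(f_i,\mathrm{id})}$ represents precisely the morphism of the exchange move associated with that rectangle (the relevant disk lies in the tetrahedron $[\theta_i;f_i(\theta_i)]*[\varphi_0;\varphi_1]$). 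Your reduction of the $(\mathrm{id},g)$ factor by conjugating with $\widehat r_{\scriptscriptstyle\diagdown}$ would work but is heavier than needed, since the two cases are handled by the same argument with the roles of $\theta$ and $\varphi$ swapped.
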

\begin{proof}
Since $(f,g) = (f,\mathrm{id})\circ(\mathrm{id},g)$, it is sufficient to consider two cases: $f=\mathrm{id}$ or $g=\mathrm{id}.$ The cases are similar so we assume $g=\mathrm{id}$.

Any orientation-preserving homeomorphism of $\mathbb S^1$ can be decomposed into homeomorphisms $f_i$ such that each $f_i$ is identical outside some interval which contains zero or one vertical level of $R$. So we can assume the latter for $f$.

Suppose $f$ is identical outside the interval $(a;b).$

If $(a;b)$ does not contain any of the vertical levels of $R$ then the homeomorphism $\widehat{(f,\mathrm{id})}:(\mathbb S^3, \widehat R)\to(\mathbb S^3, \widehat{R})$ is isotopic to the identity, and there is nothing to prove.

Suppose that there is only one vertical level $\theta_i$ of $R$ such that $\theta_i\in (a;b).$ By passing to $f^{-1}$ if necessary we can assume that $f(\theta_i)\in (\theta_i;b).$ Let $(\theta_i, \varphi_0)$ and $(\theta_i, \varphi_1)$ be two vertices of $R$ lying on the vertical level $\theta_i.$ Let $d$ be the set of points $\{(\theta,\varphi,\tau)\}$ where $\theta\in[\theta_i; f(\theta_i)],$ $\varphi\in\{\varphi_0, \varphi_1\},$ $\tau\in[0;1].$ Clearly $d$ is a disk. Let $B$ be the set of points $\{(\theta, \varphi, \tau)\}$ where $\theta\in(a;b),$ $\varphi\in\mathbb S^1,$ $\tau\in(0;1]$. Clearly $B$ is an open ball.

The homeomorphism $\widehat{(f,\mathrm{id})}$ is identical outside the ball $B$, the  ball $B$ contains the interior of $d$ and intersects $\widehat{R}\cup\widehat{(f,\mathrm{id})(R)}$ in the union of two open arcs $\{\theta_i\}\times\{\varphi_0,\varphi_1\}\times(0;1]\cup \{f(\theta_i)\}\times\{\varphi_0,\varphi_1\}\times(0;1].$ This means that the triple $(\widehat{R},\widehat{(f,\mathrm{id})(R)},[\widehat{(f,\mathrm{id})}]_{\widehat{R}})$ is a $\mathbb D^2$-move associated with the disk $d$.

Since $d$ is contained in the ball $[\theta_i; f(\theta_i)]*[\varphi_0;\varphi_1]$ we see that $\widehat{(f,\mathrm{id})}$ represents the morphism associated with an exchange move associated with the rectangle $[\theta_i; f(\theta_i)]\times[\varphi_0;\varphi_1]$.
\end{proof}

We write $r_{\scriptscriptstyle\diagdown}(R)$ for a rectangular diagram whose vertices are obtained by applying the homeomorphism $r_{\scriptscriptstyle\diagdown}$ to the set of all vertices of the diagram $R$ switching their color from black to white and vice versa.

\begin{lemm}
\label{symmetry-morphism}
For any rectangular diagram $R$ the morphism $[\widehat r_{\scriptscriptstyle\diagdown}]_{\widehat{R}}$ can be decomposed into morphisms associated with exchange moves and type~II (de)stabilizations.
\end{lemm}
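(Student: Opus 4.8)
The plan is to prove that applying $\widehat r_{\scriptscriptstyle\diagdown}$ commutes with the elementary moves up to reflecting those moves, and then to transport the morphism $[\widehat r_{\scriptscriptstyle\diagdown}]_{\widehat R}$ along a sequence of moves down to a single model diagram. The first step is the local observation that the reflection $r_{\scriptscriptstyle\diagdown}$, together with its colour switch, carries an exchange move to an exchange move and a type~II (de)stabilization to a type~II (de)stabilization. This is a finite inspection: $r_{\scriptscriptstyle\diagdown}$ sends the rectangle $\rho$ of a move to $r_{\scriptscriptstyle\diagdown}(\rho)$, the $\tau$-flip in $\widehat r_{\scriptscriptstyle\diagdown}$ interchanges black and white vertices, and comparing the resulting configurations against Figure~\ref{elementary-moves} shows the type is preserved. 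Conceptually this is forced, since $\widehat r_{\scriptscriptstyle\diagdown}$ is the symmetry compatible with the contact structure $\xi_-$, which is exactly the one governed by type~II moves.

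Next I would establish the naturality square. If $R_1\stackrel{s}{\mapsto}R_2$ is an elementary move associated with a rectangle $\rho$, then $r_{\scriptscriptstyle\diagdown}(R_1)\mapsto r_{\scriptscriptstyle\diagdown}(R_2)$ is the reflected move $r_{\scriptscriptstyle\diagdown}(s)$, and because the single homeomorphism $\widehat r_{\scriptscriptstyle\diagdown}$ carries the ball $B$ and the disk $d$ of the $\mathbb D^2$-move $s$ onto those of $r_{\scriptscriptstyle\diagdown}(s)$, the morphisms satisfy $[\widehat r_{\scriptscriptstyle\diagdown}]_{\widehat{R_2}}\circ\widehat s=\widehat{r_{\scriptscriptstyle\diagdown}(s)}\circ[\widehat r_{\scriptscriptstyle\diagdown}]_{\widehat{R_1}}$. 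Rewriting,
\[
[\widehat r_{\scriptscriptstyle\diagdown}]_{\widehat{R_1}}=\widehat{r_{\scriptscriptstyle\diagdown}(s)}^{\,-1}\circ[\widehat r_{\scriptscriptstyle\diagdown}]_{\widehat{R_2}}\circ\widehat s .
\]
When $s$ is an exchange move or a type~II (de)stabilization, so is $r_{\scriptscriptstyle\diagdown}(s)$ by the first step; hence $[\widehat r_{\scriptscriptstyle\diagdown}]_{\widehat{R_1}}$ decomposes into exchange and type~II moves if and only if $[\widehat r_{\scriptscriptstyle\diagdown}]_{\widehat{R_2}}$ does. In other words, the desired property is an invariant of the $\mathscr L_-$-class of $\widehat R$, so it is enough to verify it for one representative of each class.

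For the representative I would produce an explicit sequence, generalizing the flype constructions of Figures~\ref{7_4-flype} and~\ref{9_48-flype}. The homeomorphism $\widehat r_{\scriptscriptstyle\diagdown}$ is the orientation-preserving involution rotating $\mathbb S^3$ by $\pi$ about its fixed circle $\{(\theta,-\theta,\tfrac12)\}$, and it swaps the two circles $\tau=0$ and $\tau=1$ of the join. I expect to realize this half-turn by a sequence of flype-type moves: a type~II stabilization to create room, a block of exchange moves that slides vertices halfway around the torus (these realize the rotation part by Lemma~\ref{combinatorial-morphism}), and a type~II destabilization to reabsorb the extra vertex, iterated until the diagram $r_{\scriptscriptstyle\diagdown}(R)$ is reached. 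Throughout, only exchange and type~II moves are used.

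The main obstacle is the final verification that this sequence represents the prescribed morphism $[\widehat r_{\scriptscriptstyle\diagdown}]_{\widehat R}$, rather than some other element of the coset $[\widehat r_{\scriptscriptstyle\diagdown}]\,\mathrm{Sym}_-(R)$: the combinatorics only pin down the $\mathscr L_-$-class, whereas the statement is about a specific connected component of the homeomorphism space. To settle this I would track the composite homeomorphism on a spine of the complement of $\widehat R$ --- for instance a Wirtinger spatial graph, as in the remark following Lemma~\ref{7_4-symmetry} --- and check that it agrees there with $\widehat r_{\scriptscriptstyle\diagdown}$, so that the two are isotopic relative to $\widehat R$. The invariance proved in the second step is what then upgrades this single computation to an arbitrary rectangular diagram.
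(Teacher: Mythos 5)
Your opening naturality observation is correct and worth having: conjugating an elementary move by $\widehat r_{\scriptscriptstyle\diagdown}$ carries its $\mathbb D^2$-move disk into the reflected tetrahedron, so $[\widehat r_{\scriptscriptstyle\diagdown}]_{\widehat{R_1}}$ decomposes if and only if $[\widehat r_{\scriptscriptstyle\diagdown}]_{\widehat{R_2}}$ does. But the reduction you draw from it --- ``it is enough to verify the claim for one representative of each $\mathscr L_-$-class'' --- buys you essentially nothing, because the set of $\mathscr L_-$-classes is as unwieldy as the set of all diagrams and has no canonical representatives; you are still left with proving the statement for an essentially arbitrary $R$. The paper's reduction is of a different and more effective kind: using Lemma~\ref{combinatorial-morphism} it normalizes $R$ into the square $(3\pi/2;2\pi)\times(0;\pi/2)$ and splits off combinatorial equivalences, so that only one explicitly described residual map $F$ (a flype) remains to be realized, uniformly for all such diagrams.

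The genuine gap is the step you yourself flag as ``the main obstacle'' and then only name a strategy for. Producing \emph{some} sequence of exchange and type~II moves from $R$ to $r_{\scriptscriptstyle\diagdown}(R)$ is not the issue (though your description ``iterated until the diagram is reached'' elides the applicability and termination bookkeeping, which the paper handles by ordering the vertices by $\theta-\varphi$ and checking that each rectangle $r_i$ sits in a vertex-free domain $D_i$). The issue is proving that the composite of the associated morphisms is exactly $[\widehat r_{\scriptscriptstyle\diagdown}]_{\widehat R}$ and not another element of its coset modulo $\mathrm{Sym}_-(R)$. Tracking a Wirtinger spatial graph through a diagram-dependent sequence of moves is feasible for a single small diagram (that is how Lemma~\ref{7_4-symmetry} is argued), but you give no mechanism for doing it uniformly in $R$, and without one the proof does not close. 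The paper resolves this by writing down an explicit $\mathrm{SU}(2)$-isotopy $H$ from $\mathrm{id}$ to $\widehat F$, verifying that the trace of each edge $\widehat{v_i}$ under $H$ is an embedded disk $d_i\subset\Delta_{r_i}$ with $\partial d_i=\widehat{\partial r_i}$, and invoking Lemma~\ref{isotopy-via-d2moves} to identify the composite of the $\mathbb D^2$-moves with $[\widehat F]_{\widehat R}$. Some device of this kind --- a global isotopy whose trace is tiled by the disks of your moves --- is what your proposal is missing.
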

\begin{proof}
This lemma follows from a more general statement. Since $\widehat r_{\scriptscriptstyle\diagdown}$ preserves the contact structure $\xi_-$ together with its coorientation(s), and since any $\xi_-$-contactomorphism of $\mathbb S^3$ which preserves the coorientation of $\xi_-$ is isotopic to the identity through $\xi_-$-contactomorphisms, there exists a Legendrian isotopy between $\widehat R$ and $\widehat {r_{\scriptscriptstyle\diagdown}(R)}$ realizing the same morphism. And actually for any two rectangular diagrams $R_1$ and $R_2$ any morphism provided by a $\xi_-$-Legedrian isotopy between $\widehat R_1$ and $\widehat R_2$ can be decomposed into morphisms associated with exchange moves and type~II (de)stabilizations. One can prove the general statement in the same way as in \cite{ngth,OST}. We mimic this proof in the partial case specified in the statement of this lemma. First, we construct a Legendrian isotopy. Then we tile the trace of the link by this isotopy into disks, such that the $\mathbb D^2$-move associated with each disk is associated with an elementary move of rectangular diagrams. In this way we obtain a decomposition into morphisms associated with elementary moves.

Let $f, g:\mathbb S^1\to\mathbb S^1$ be such orientation-preserving homeomorphisms that $(f,g)(R) \subset (3\pi/2;2\pi)\times (0;\pi/2).$ Consider the decomposition 
$$r_{\scriptscriptstyle\diagdown} = 
\left(r_{\scriptscriptstyle\diagdown}(f^{-1},g^{-1})r_{\scriptscriptstyle\diagdown}\right)
r_{\scriptscriptstyle\diagdown}
(f,g).$$
It is easy to see that $r_{\scriptscriptstyle\diagdown}(f^{-1},g^{-1})r_{\scriptscriptstyle\diagdown} = (f_1, g_1)$ for some orientation-preserving homeomorphisms $f_1, g_1:\mathbb S^1\to\mathbb S^1$ and $\widehat r_{\scriptscriptstyle\diagdown}\widehat{(f^{-1},g^{-1})}\widehat r_{\scriptscriptstyle\diagdown} = \widehat{(f_1, g_1)}.$ 

Since $\widehat r_{\scriptscriptstyle\diagdown} = \widehat{(f_1,g_1)}
\widehat r_{\scriptscriptstyle\diagdown}
\widehat{(f,g)}$, by Lemma~\ref{combinatorial-morphism} it is sufficient to prove the statement of this lemma for the diagram $(f,g)(R).$ So further we assume $\theta\in(3\pi/2;2\pi)$ and $\varphi\in(0;\pi/2)$ for any vertex $(\theta,\varphi)\in R$.

Let $f_2(\theta) = \theta - \pi/2$, $g_2(\varphi) = \varphi+\pi/2$, $F = (f_2,g_2)\circ r_{\scriptscriptstyle\diagdown}$ and $\widehat{F} = \widehat{(f_2,g_2)}\circ \widehat r_{\scriptscriptstyle\diagdown}$. Since $\widehat r_{\scriptscriptstyle\diagdown} = \widehat{(f_2^{-1},g_2^{-1})}\circ\widehat{F}$, by Lemma~\ref{combinatorial-morphism} it sufficient to prove that the morphism $[\widehat F]_{\widehat{R}}$ can be decomposed into morphisms associated with exchange moves and type~II (de)stabilizations.

The action of $F$ on the diagram $R$ is a flype (see \cite{dyn2003} for the definition). To decompose the flype $F$ into elementary moves we follow the approach of \cite{dysok}.

\begin{figure}[h]
\includegraphics[scale=.2]{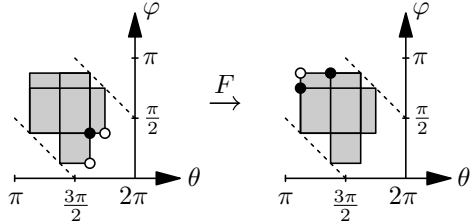}
\put(-90,35){$F$}
\put(-100,0){$\theta$}
\put(2,0){$\theta$}
\put(-46,-10){$\frac{3\pi}2$}
\put(-66,-7){$\pi$}
\put(-24,-7){$2\pi$}
\put(-15,23){$\frac{\pi}2$}
\put(-15,46){$\pi$}
\put(-15,65){$\varphi$}
\put(-148,-10){$\frac{3\pi}2$}
\put(-168,-7){$\pi$}
\put(-126,-7){$2\pi$}
\put(-117,23){$\frac{\pi}2$}
\put(-117,46){$\pi$}
\put(-117,65){$\varphi$}
\caption{Action of $F$ on vertices of a rectangular diagram}
\label{flype-example}
\end{figure}

Namely, arrange the vertices of the diagram $R$ in order of nondecreasing of $\theta - \varphi$. Following this ordering for every vertex $v_i\in R$ construct a rectangle $r_i$ whose right bottom corner is $v_i$ and whose left upper corner is $F(v_i).$ It is easy to see that the other two corners of the rectangle lie on the lines $\theta+\varphi = 2\pi\pm\pi/2$ (see Figure~\ref{flype-example}). Then apply an elementary move $s_i$ associated with the rectangle $r_i$.

\begin{figure}[h]
\includegraphics[scale=.2]{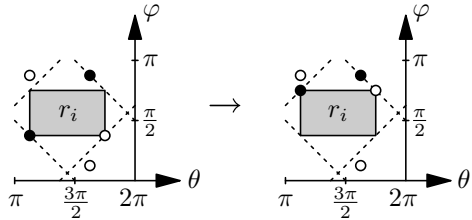}
\put(-100,0){$\theta$}
\put(-47,27){$r_i$}
\put(2,0){$\theta$}
\put(-46,-8){$\frac{3\pi}2$}
\put(-66,-7){$\pi$}
\put(-24,-7){$2\pi$}
\put(-16,23){$\frac{\pi}2$}
\put(-15,46){$\pi$}
\put(-15,65){$\varphi$}
\put(-148,-8){$\frac{3\pi}2$}
\put(-149,27){$r_i$}
\put(-168,-7){$\pi$}
\put(-126,-7){$2\pi$}
\put(-118,23){$\frac{\pi}2$}
\put(-117,46){$\pi$}
\put(-117,65){$\varphi$}
\caption{A single elementary move in the decomposition of $F$}
\label{flype-step}
\end{figure}

This ordering guarantees that at each step the corresponding elementary move will be applicable. More precisely, at the moment of eliminating the vertex $v_i (\theta_i, \varphi_i)$ the domain $D_i\subset\mathbb T^2$, where
$$
D_i:
\left\{
\begin{aligned}
\theta_i-\varphi_i-\pi&<\theta-\varphi<\theta_i-\varphi_i,\\ 
2\pi-\pi/2&<\theta+\varphi<2\pi+\pi/2,
\end{aligned}
\right.
$$
 contains no vertices of the rectangular diagram. Note that the vertices $v_i$ and $F(v_i)$ lie on the boundary of $D_i$ and the rectangle $r_i$ is contained in $D_i$ (see Figure~\ref{flype-step}). So the intersection of the rectangle $r_i$ with the set of all vertices of the rectangular diagram consists of the vertex $v_i$ and possibly one or two more vertices of $r_i$ lying on the lines $\theta+\varphi=2\pi\pm\pi/2.$

At the intersection of each occupied vertical (horizontal) level of the diagram $R$ with the line $\theta+\varphi=2\pi+\pi/2$ (respectively, $\theta+\varphi=2\pi-\pi/2$) a vertex is created and then eliminated during this process, and no other vertices appear. Therefore the sequence of elementary moves $\{s_i\}$ produces the diagram $F(R)$ from the diagram $R$:

$$
R = R_0 \stackrel{s_1}{\longmapsto} R_1 \stackrel{s_2}{\longmapsto}\dots \stackrel{s_N}{\longmapsto} R_N = F(R).
$$

Now let us prove that a composition of morphisms $\widehat{s_i}$ equals $[\widehat F]_{\widehat R}.$ We will construct an isotopy $H:\mathbb S^3\times[0;1] \to \mathbb S^3$ such that
\begin{enumerate}
\item $H(\bullet, 0) = \mathrm{id}_{\mathbb S^3}$;
\item $H(\bullet, 1) = \widehat F$;
\item $H\big|_{\widehat{v_i}\times[0;1]}$ is an embedding with image a disk $d_i$ for any $i = 1, \dots, N$;
\item $d_i \subset \Delta_{r_i}$ for any $i = 1, \dots, N$ where $\Delta_{r_i}$ is the tetrahedron associated with the rectangle $r_i$;
\item $\partial d_i = \widehat{\partial r_i}$ for any $i = 1, \dots, N$.
\end{enumerate}

By the conditions (4) and (5), $\mathbb D^2$-moves 
$$
(\widehat R_0, \widehat R_1, \widehat{s_1}), (\widehat R_1, \widehat R_2, \widehat{s_2}), \dots, (\widehat R_{N-1}, \widehat R_N, \widehat{s_N})
$$
are associated with the disks $d_1, d_2, \dots, d_N.$ By the condition~(3), the annuli $\widehat R\times[0;1]$ are tiled into the disks $\{\widehat{v_i}\times[0;1],$ $i=1,\dots,N\}$ which are mapped to the disks $\{d_i\}$ under the isotopy. Then the conditions~(1) and (2) and Lemma~\ref{isotopy-via-d2moves} imply that 
$$[\widehat F]_{\widehat R} = [H(\bullet, 1)]_{\widehat R} = \widehat{s_N}\circ\dots\circ\widehat{s_2}\circ\widehat{s_1}.$$

Now we construct such an isotopy $H$. We use the following parametrization of the unit sphere $\mathbb S^3\subset\mathbb C^2:$
$$
(\theta, \varphi, \tau) \mapsto (\cos(\pi\tau/2)e^{\mathbbm i\varphi}, \sin(\pi\tau/2)e^{\mathbbm i\theta}).
$$

Define $H((z,\overline w), \frac{t}{\pi/2}) = (z', \overline {w'}),$ where

$$
\begin{pmatrix}
z' \\
w'
\end{pmatrix}
=
\begin{pmatrix}
\cos t & \mathbbm i\sin t\\
\mathbbm i\sin t & \cos t
\end{pmatrix}
\begin{pmatrix}
z \\
w
\end{pmatrix}
\text{ for }
z, w\in\mathbb C.
$$

Since this matrix belongs to $\mathrm{SU}(2)$, $H$ is an isotopy. Moreover, if we allow $t$ to be any real number then these matrices form a subgroup. Next we check the conditions above one by one.

\noindent{\it Condition (1)}. If $t=0$, we get the identity matrix.

\noindent{\it Condition (2)}. If $t=\pi/2$, we get $z' = \mathbbm iw$ and $w' = \mathbbm iz$. Let $\theta', \varphi', \tau'$ be the coordinates of the image of the point $(\theta, \varphi, \tau).$ Since 
$$
\theta = -\arg w,\ \ \varphi = \arg z,\ \ \tau = \frac2{\pi}\arctan \left|\frac{w}z\right|
$$
and similar formulas hold for variables with primes we see that
$$
\theta' = -\frac{\pi}2 - \varphi, \ \ \varphi' = \frac{\pi}2 -\theta, \ \ \tau' = 1 - \tau. 
$$

\noindent{\it Condition (3)}. Since the matrix is unitary and $(1,1)$ is an eigenvector, the length of a projection to $(1,1)$ keeps constant under the action of this matrix for any $t$. This means that the function $|z+w|^2$ is constant along the trajectories. Compute this function in the coordinate system $(\theta,\varphi,\tau)$:

$$
|z+w|^2 = |\cos(\pi\tau/2)e^{\mathbbm i \varphi}+\sin(\pi\tau/2)e^{-\mathbbm i \theta}|^2 = 1 + \sin(\pi\tau)\cos(\varphi+\theta).
$$

Along $\widehat{v_i}$ the $\theta$- and $\varphi$- coordinates are constant and $\varphi+\theta \in (3\pi/2;5\pi/2)$ since $\theta\in(3\pi/2;2\pi)$ and $\varphi\in(0;\pi/2)$. So we see that in two distinct points of $\widehat{v_i}$ the values of the function $|z+w|^2$ are distinct. Hence the trajectories of two distinct points of $\widehat {v_i}$ do not intersect.

It remains to show that the trajectories are not self-intersecting. Note that the existence of such self-intersection means that the matrix has an eigenvalue 1 for some $t\in(0;\pi/2)$. This is not the case since the eigenvalues are $e^{\pm\mathbbm i t}.$

\noindent{\it Condition (4)}.
Let $(\theta', \varphi', \tau')$ be the image of the point $(\theta, \varphi, \tau)\in \widehat{v_i}$ under $H(\bullet, t).$ Then

\begin{multline*}
\varphi' = \arg z' = \arg(z\cos t + \mathbbm i w\sin t) = \arg\left(\cos t\cos(\pi\tau/2)e^{\mathbbm i \varphi} + \mathbbm i \sin t\sin(\pi\tau/2)e^{-\mathbbm i \theta}\right) = \\ = \arg\left(\cos t \cos(\pi\tau/2)e^{\mathbbm i \varphi} + \sin t\sin(\pi\tau/2)e^{\mathbbm i(-\theta+\pi/2)}\right).
\end{multline*}

Since $\sin t$, $\cos t$, $\sin(\pi\tau/2)$ and $\cos(\pi\tau/2)$ are nonnegative and $\theta\in(3\pi/2;2\pi)$ and $\varphi\in(0;\pi/2)$, it follows that $\varphi'$ belongs to $[\varphi;-\theta+\pi/2]$ if defined, as desired.

A similar calculation for $\theta'$:

$$
\theta' = -\arg w' = -\arg(\mathbbm i z \sin t + w \cos t) = -\arg\left(\sin t\cos(\pi\tau/2) e^{\mathbbm i(\varphi + \pi/2)} + \cos t\sin(\pi\tau/2) e^{-\mathbbm i\theta}\right).
$$

\noindent{\it Condition (5)}.
The boundary of the disk $d_i$ is a union of $\widehat{v_i},$ $\widehat{F(v_i)},$ and two trajectories of endpoints of $\widehat{v_i}$ under the action of $H(\bullet, t),$ where $t\in[0;1].$ Let us prove that the trajectory of the endpoint with $\tau = 1$ coincides with the segment $\widehat{(\theta_i,-\theta_i+\pi/2)}\subset \widehat{\partial r_i}$.

The calculation for the condition~(4) shows that if $\tau = 1$ then the $\theta$- and $\varphi$- coordinates of the point on the trajectory keep constant and equal $\theta_i$ and  $-\theta_i+\pi/2$ respectively for nonzero values of $t$. This means that the trajectory lies inside the segment $\widehat{(\theta_i,-\theta_i+\pi/2)}\subset \widehat{\partial r_i}$. Moreover, the trajectory and the segment coincide because the trajectory is embedded and they have common endpoints. 

The case of $\tau = 0$ and the vertex $(-\varphi_i-\pi/2, \varphi_i)$ of the rectangle $r_i$ is similar.

\end{proof}

\begin{coro}
Let $\widehat r_{\scriptscriptstyle\diagup}$ be a homeomorphism $(\theta,\varphi,\tau)\mapsto (\varphi, \theta, 1-\tau)$.
Then for any rectangular diagram $R$ the morphism $[\widehat r_{\scriptscriptstyle\diagup}]_{\widehat{R}}$ can be decomposed into morphisms associated with exchange moves and (de)stabilizations of type~I.
\end{coro}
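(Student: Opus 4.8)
The plan is to deduce the statement from Lemma~\ref{symmetry-morphism} by conjugating with a single coordinate reflection that interchanges the two diagonals and, with them, the two types of (de)stabilizations. Concretely, I would set $\Phi(\theta,\varphi,\tau)=(\theta,-\varphi,\tau)$, an involution of $\mathbb S^3$. A direct check gives $\Phi\,\widehat r_{\scriptscriptstyle\diagdown}\,\Phi^{-1}=\widehat r_{\scriptscriptstyle\diagup}$: indeed, since $\Phi=\Phi^{-1}$, one has $\widehat r_{\scriptscriptstyle\diagdown}\Phi(\theta,\varphi,\tau)=(\varphi,-\theta,1-\tau)$, and applying $\Phi$ yields $(\varphi,\theta,1-\tau)=\widehat r_{\scriptscriptstyle\diagup}(\theta,\varphi,\tau)$.

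First I would record the relevant properties of $\Phi$. It sends every rectangular diagram $R$ to a rectangular diagram $\Phi(R)$ (vertical and horizontal edges are preserved), and since $\Phi$ keeps the coordinate $\tau$ fixed it preserves the colors of vertices and hence the orientations and numbering of the components of $\widehat R$. The reflection $(\theta,\varphi)\mapsto(\theta,-\varphi)$ reverses the orientation of $\mathbb T^2$; comparing the configurations in Figure~\ref{elementary-moves}, this interchanges type~I and type~II (de)stabilizations while carrying exchange moves to exchange moves. Because $\Phi$ respects the join structure and fixes $\tau$, it carries the tetrahedron $\Delta_r$ onto $\Delta_{\Phi(r)}$, so the $\mathbb D^2$-move associated with a move $s$ supported in $\Delta_r$ is carried to the $\mathbb D^2$-move of the corresponding move supported in $\Delta_{\Phi(r)}$; in other words, conjugation by $\Phi$ sends the morphism $\widehat s$ associated with an elementary move to the morphism associated with its $\Phi$-image.

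Although $\Phi$ reverses the orientation of $\mathbb S^3$ and therefore is not itself a morphism, the operation $\psi\mapsto\Phi\psi\Phi^{-1}$ is nevertheless well defined on morphisms: if $\psi$ preserves the orientation of the sphere then so does $\Phi\psi\Phi^{-1}$, and by the previous paragraph the component orientations and the numbering are respected as well; moreover the operation respects composition. Hence I would argue as follows. Put $R^{*}=\Phi(R)$. By Lemma~\ref{symmetry-morphism} the morphism $[\widehat r_{\scriptscriptstyle\diagdown}]_{\widehat{R^{*}}}$ factors as a composition $\widehat{s_N}\circ\dots\circ\widehat{s_1}$ of morphisms associated with exchange moves and type~II (de)stabilizations. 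Conjugating the whole factorization by $\Phi$ and using $\Phi\,\widehat r_{\scriptscriptstyle\diagdown}\,\Phi^{-1}=\widehat r_{\scriptscriptstyle\diagup}$ together with $\Phi(R^{*})=R$ gives
\[
[\widehat r_{\scriptscriptstyle\diagup}]_{\widehat R}=(\Phi\,\widehat{s_N}\,\Phi^{-1})\circ\dots\circ(\Phi\,\widehat{s_1}\,\Phi^{-1}),
\]
and each factor is a morphism associated with an exchange move (if $s_i$ was an exchange move) or with a type~I (de)stabilization (if $s_i$ was of type~II). This is the desired decomposition.

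The only genuinely nontrivial point is the claim that the one-axis reflection $\Phi$ interchanges the two types of (de)stabilizations; everything else is bookkeeping. This is exactly the mirror symmetry between the contact structures $\xi_+$ and $\xi_-$ underlying the whole construction, visible directly from Figure~\ref{elementary-moves}, and it is consistent with the fact that $\widehat r_{\scriptscriptstyle\diagup}$ preserves $\xi_+$ just as $\widehat r_{\scriptscriptstyle\diagdown}$ preserves $\xi_-$ in the proof of Lemma~\ref{symmetry-morphism}. One could alternatively bypass the conjugation entirely and repeat that proof verbatim with $\xi_+$ and type~I in place of $\xi_-$ and type~II, since $\widehat r_{\scriptscriptstyle\diagup}$ is a $\xi_+$-contactomorphism preserving the coorientation of $\xi_+$.
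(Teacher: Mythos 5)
Your argument is correct and is essentially the paper's own proof: the paper likewise conjugates the decomposition supplied by Lemma~\ref{symmetry-morphism} by a single orientation-reversing coordinate reflection (it uses $(\theta,\varphi,\tau)\mapsto(-\theta,\varphi,\tau)$ where you use $(\theta,\varphi,\tau)\mapsto(\theta,-\varphi,\tau)$, an immaterial difference), using the identity $\widehat r_{\scriptscriptstyle\diagup}=\widehat r_{\scriptscriptstyle|}\,\widehat r_{\scriptscriptstyle\diagdown}\,\widehat r_{\scriptscriptstyle|}$ and the fact that this reflection carries exchange moves to exchange moves and type~II (de)stabilizations to type~I ones. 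Your remarks on why conjugation by an orientation-reversing homeomorphism is well defined on morphisms and respects the associated $\mathbb D^2$-moves fill in details the paper leaves implicit, so nothing is missing.
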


\begin{proof}
Define two self-homeomorphisms of $\mathbb T^2$: 
$$
r_{\scriptscriptstyle\diagup}:(\theta,\varphi)\mapsto(\varphi,\theta) \text{ and } r_{\scriptscriptstyle |}:(\theta,\varphi)\mapsto(-\theta,\varphi).
$$

Let $\widehat r_{\scriptscriptstyle |}$ be a homeomorphism $(\theta,\varphi,\tau)\mapsto (-\theta, \varphi, \tau)$.

If we apply Lemma~\ref{symmetry-morphism} to the morphism $[\widehat r_{\scriptscriptstyle\diagdown}]_{\widehat r_{\scriptscriptstyle |}(\widehat{R})}$ we get

$$
[\widehat r_{\scriptscriptstyle\diagdown}]_{\widehat r_{\scriptscriptstyle |}(\widehat{R})} = \widehat s_N\circ\dots\circ\widehat s_2\circ\widehat s_1,
$$

where $R_{i-1} \stackrel{s_i}{\longmapsto} R_i$ is an exchange move or (de)stabilization of type~II, $R_0 = r_{\scriptscriptstyle |}({R})$ and $R_N = r_{\scriptscriptstyle |}\circ r_{\scriptscriptstyle\diagup}({R}).$

Since $\widehat r_{\scriptscriptstyle\diagup} = \widehat r_{\scriptscriptstyle |}\widehat r_{\scriptscriptstyle\diagdown}\widehat r_{\scriptscriptstyle |}$ and $\left(\widehat r_{\scriptscriptstyle |}\right)^2=\mathrm{id}_{\mathbb S^3},$

$$
[\widehat r_{\scriptscriptstyle\diagup}]_{\widehat R} = [\widehat r_{\scriptscriptstyle |}\widehat r_{\scriptscriptstyle\diagdown}\widehat r_{\scriptscriptstyle |}]_{\widehat R} = 
\left[\widehat r_{\scriptscriptstyle |}\widehat s_N\widehat r_{\scriptscriptstyle |}\right]_{\widehat r_{\scriptscriptstyle |}(\widehat R_{N-1})}\circ\dots\circ\left[\widehat r_{\scriptscriptstyle |}\widehat s_2\widehat r_{\scriptscriptstyle |}\right]_{\widehat r_{\scriptscriptstyle |}(\widehat R_{1})}\circ\left[\widehat r_{\scriptscriptstyle |}\widehat s_1\widehat r_{\scriptscriptstyle |}\right]_{\widehat r_{\scriptscriptstyle |}(\widehat R_{0})}.
$$

If $R_{i-1}\mapsto R_i$ is an exchange move ((de)stabilization of type~II) then $r_{\scriptscriptstyle |}(R_{i-1})\mapsto r_{\scriptscriptstyle |}(R_i)$ is an exchange move ((de)stabilization of type~I, respectively). So we obtained a sought-for composition of morphisms.

\end{proof}

%

\begin{rema}
Sequences of elementary moves in Figures~\ref{7_4-flype} and \ref{9_48-flype} are flypes in the sense of~\cite{dyn2003}. So we can say that in each topological type $7_4,$ $9_{48}$ and $10_{136}$ the symmetry group can be generated by a single flype of a minimal diagram.
\end{rema}

The proof of the following lemma is easy and left to the reader.

\begin{lemm}
\label{isotopy-via-d2moves}
Let $L$ be a link and $H:L \times [0;1] \to \mathbb S^3$ be an isotopy of this link such that $H(L\times\{0\})=\mathrm{id}_L$. Suppose that the annuli $L\times[0;1]$ are tiled into disks $\{D_i\}_{i=1,\dots,N}$ and that $H\big|_{D_i}$ is an embedding for each $i=1,\dots,N$. Suppose that
there exists a sequence of links and morphisms
$$L = L_0 \stackrel{\eta_1}{\longmapsto} L_1 \stackrel{\eta_2}{\longmapsto}\dots \stackrel{\eta_N}{\longmapsto} L_N = H(L\times\{1\})$$ such that the triple $(L_{i-1}, L_i, \eta_i)$ is a $\mathbb D^2$-move associated with the disk $H(D_i)$ for any $i=1,\dots, N$. Then the morphism from $L$ to $H(L\times\{1\})$ represented by the isotopy $H$ equals $\eta_N\circ\dots\circ\eta_2\circ\eta_1.$ All links, the isotopy and the tiling are assumed to be PL.
\end{lemm}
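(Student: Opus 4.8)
The plan is to use the standard fact that an isotopy of a link determines a morphism --- namely the class of the time-$1$ map of any ambient extension --- together with the good behaviour of this assignment under concatenation and under homotopy of isotopies. Concretely, by the PL isotopy extension theorem $H$ extends to an ambient isotopy $\widetilde H\colon\mathbb S^3\times[0;1]\to\mathbb S^3$ with $\widetilde H(\bullet,0)=\mathrm{id}_{\mathbb S^3}$, and the morphism represented by $H$ is $[\widetilde H(\bullet,1)]_L$; it is independent of the chosen extension, and if an isotopy is cut at finitely many times then the morphism of the whole is the composition of the morphisms of the pieces. I would also use that two isotopies of $L$ agreeing at $t=0$ and $t=1$ and homotopic rel these ends induce the same morphism, since a parametrized application of isotopy extension keeps the time-$1$ homeomorphisms in one connected component of $\mathrm{Homeo}((\mathbb S^3,L),(\mathbb S^3,L_N))$.

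The first step is to realize each $\eta_i$ by an explicit ambient isotopy supported near $H(D_i)$. As $H|_{D_i}$ is an embedding, $H(D_i)$ is an embedded disk with boundary $\overline{\alpha_i\cup\beta_i}$, where $\alpha_i\subset L_{i-1}$ and $\beta_i\subset L_i$; a regular neighborhood of $H(D_i)$ is a ball $B_i$ meeting $L_{i-1}\cup L_i$ exactly in $\alpha_i\cup\beta_i$, and sliding $\alpha_i$ across $H(D_i)$ to $\beta_i$ gives an ambient isotopy $g^i$ supported in $B_i$ carrying $L_{i-1}$ to $L_i$. By the definition of a $\mathbb D^2$-move its time-$1$ map represents $\eta_i$, so $[g^i(\bullet,1)]_{L_{i-1}}=\eta_i$. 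Arranging that the moving arc sweeps precisely the disk $H(D_i)$, the concatenation $G=g^N*\dots*g^1$ is an ambient isotopy from $L$ to $L_N$ whose trace is $\bigcup_i H(D_i)=H(L\times[0;1])$, the same point set swept by $H$. By concatenation-functoriality $[G(\bullet,1)]_L=\eta_N\circ\dots\circ\eta_1$, so it remains to identify the morphisms of $G$ and of $H$.

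The core of the argument is that $G$ and $H$ induce the same morphism, which I would prove by factoring $G$ through $H$. During the $i$-th stage the moving arc sweeps $H(D_i)$, and since $H|_{D_i}$ is an embedding this sweep lifts uniquely to the tile $D_i\subset L\times[0;1]$. Gluing these lifts along the curves $C_i=\partial(D_1\cup\dots\cup D_i)\setminus(L\times\{0\})$, where consecutive stages match (the end of stage $i$ landing on the part $\beta_i$ of $C_i$, the start of stage $i+1$ on $\alpha_{i+1}\subset C_i$), yields a continuous map $\psi\colon L\times[0;1]\to L\times[0;1]$ with $H\circ\psi=G$. By construction $\psi$ is the identity on $L\times\{0\}$, maps the $i$-th time-slab homeomorphically onto $D_i$ in order, and is orientation-preserving on the boundary. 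Hence $\psi$ is a PL self-homeomorphism of the annulus, isotopic to the identity up to an orientation-preserving reparametrization of the two boundary circles; since such reparametrizations of $L_N$ do not change the morphism, $G=H\circ\psi$ is homotopic rel ends to $H$, and $[H]=[G]=\eta_N\circ\dots\circ\eta_1$.

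The main obstacle is the last verification: that $\psi$ is isotopic to the identity, equivalently that the exhaustion $D_1\subset D_1\cup D_2\subset\dots$ of the product annulus by successive attachments of a disk along a boundary arc is a monotone sweep equivalent rel boundary to the horizontal product sweep. This is the only genuinely technical point, and it is where the hypotheses that the $D_i$ tile the whole annulus and are ordered compatibly with the $\mathbb D^2$-moves enter: they guarantee that each new tile is attached along an arc of the current upper boundary, so that the curves $C_i$ sweep monotonically from $L\times\{0\}$ to $L\times\{1\}$. In the PL category this monotonicity lets one build the required ambient isotopy of the annulus one tile at a time, and everything else in the proof is formal.
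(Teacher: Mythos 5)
The paper gives no proof of this lemma --- it is explicitly ``left to the reader'' --- so there is no written argument to compare yours against; judged on its own, your strategy (realize each $\eta_i$ by an ambient push across $H(D_i)$, concatenate, and identify the concatenation with $H$ via a reparametrization $\psi$ of the annulus) is sound and is very likely what the authors intend. Two places, however, deserve more than the assertion you give them. First, the sentence ``by the definition of a $\mathbb D^2$-move its time-$1$ map represents $\eta_i$'' conflates existence with uniqueness: the definition only says that $\eta_i$ admits \emph{some} representative supported in a ball around the disk, whereas you construct a second homeomorphism with the same properties and need to know that all such homeomorphisms lie in a single component, i.e.\ that the morphism of a $\mathbb D^2$-move is determined by the disk. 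That uniqueness is part of the machinery of \cite{dypra202?} (the present paper already presupposes it when it speaks of \emph{the} morphism associated with an elementary move), so invoke it explicitly rather than folding it into ``by definition.''

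Second, the factorization $G=H\circ\psi$ rests on each $L_i$ being the $H$-image of the upper frontier of $D_1\cup\dots\cup D_i$ and on each new tile being attached along an arc of that frontier. Since $H$ need not be injective on the whole annulus (the link may revisit points of $\mathbb S^3$ at different times), the lifts you glue are determined only locally, and the inductive verification that the frontier curves are embedded sections of $L\times[0;1]$ sweeping monotonically upward is precisely where the tiling and ordering hypotheses are consumed. You correctly isolate this as the one technical point, but it remains a sketch; in particular, the closing claims that $\psi$ is isotopic to the identity up to boundary reparametrization, and that such a reparametrization of $L\times\{1\}$ --- being orientation- and component-preserving, hence isotopic to the identity through self-maps of the pair --- does not change the morphism, should be spelled out. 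Neither issue is a wrong turn, and both are fillable in the PL category, so I would call this a correct plan with two acknowledged but unexecuted steps rather than a complete proof.
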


\section*{Acknowledgments}

We are grateful to Ivan Dynnikov for support and for providing us the diagram $9_{48}^6$ and a generator of $\mathrm{Sym}(7_4).$

We are grateful to the referee for the careful reading of the paper and for the comments and detailed suggestions.

The work is supported by the Russian Science Foundation under grant 22-11-00299.

\end{document}